\documentclass[12pt,reqno]{amsart}
\usepackage{amsmath}
\usepackage{amsfonts}
\usepackage{a4wide}
\usepackage{amsmath,amsthm,amssymb,amscd}
\usepackage{latexsym}
\usepackage{hyperref}
\usepackage[numbers,sort&compress]{natbib}
\usepackage{hypernat}
\allowdisplaybreaks
\numberwithin{equation}{section}
\usepackage[dvipsnames]{xcolor}

\usepackage{changes}

\newtheorem{theorem}{Theorem}[section]
\newtheorem{proposition}[theorem]{Proposition}
\newtheorem{corollary}[theorem]{Corollary}
\newtheorem{lemma}[theorem]{Lemma}

\theoremstyle{definition}

\newtheorem{definition}[theorem]{Definition}

\newtheorem{remark}[theorem]{Remark}

\newcommand{\ds}{\displaystyle}

\newcommand{\eq}{\eqref}

\def\R{\mathbb R}
\def\N{\mathbb N}
\def\C{\mathbb C}
\def\Z{\mathbb Z}

\def\a{\alpha}
\def\be{\beta}
\def\ga{\gamma}
\def\de{\delta}

\def\la{\lambda}
\def\si{\sigma}

\def\diverg{{\rm div}\,}

\newcommand{\dist}{{\rm dist}}
\newcommand{\va}{\varepsilon}

\usepackage{color}

\newtheorem{example}{\textbf{Example}}

\definechangesauthor[color=blue]{R.M.}

\usepackage{todonotes}

\newcommand{\beq}{\begin{equation}}
\newcommand{\eeq}{\end{equation}}
\newtheorem{notations}[theorem]{Notations}

\usepackage[dvipsnames]{xcolor}

\newcommand\myshade{85}
\colorlet{mylinkcolor}{violet}
\colorlet{mycitecolor}{YellowOrange}
\colorlet{myurlcolor}{Aquamarine}

\hypersetup{
	linkcolor  = mylinkcolor!\myshade!black,
	citecolor  = mycitecolor!\myshade!black,
	urlcolor   = myurlcolor!\myshade!black,
	colorlinks = true,
           }
   \def\bl{\color{blue}}

   \def\cL{{\mathcal L}}

   \def\sgn{\mathop{\rm sgn}\nolimits}
   \def\loc{\mathop{\rm loc}\nolimits}
   \def\spant{\mathop{\rm span}\nolimits}

\begin{document}

%%%%%%%%%%%%%%%%%%%%%%%%%%%%%%%%%%%%%%%%%%%%%%%%%%%%%%%%%%%%%%%%%%%%%%%%%%%%%%%%%%
%%%%%%%%%%%%%%%%%%%%%%%%%%%%%%%%%%%%%%%%%%%%%%%%%%%%%%%%%%%%%%%%%%%%%%%%%%%%%%%%%%

\title[BEC-Magnetic]
{Qualitative analysis of energy ground states for magnetic focusing Gross-Pitaevskii equations} 

\author{Xiaoming An}
\address{
\newline\indent Xiaoming An, School of Mathematics and Statistics, Guizhou University of Finance and Economics,
\newline\indent Guiyang, 550025, P. R. China.}
\email{xman@mail.gufe.edu.cn}

\author{Lun Guo}
\address{
\newline\indent Lun Guo, School of Mathematics and Statistics, South-Central Minzu University, 
\newline\indent Wuhan, 430074, P. R. China.}
\email{lguo@mails.ccnu.edu.cn}

\author{Xiao Luo}
\address{
\newline\indent Xiao Luo, School of Mathematics, Hefei University of Technology,
\newline\indent Hefei, 230009, P. R. China.}
\email{luoxiao@hfut.edu.cn}

\author{Riccardo Molle$^*$ }
\address{
\newline\indent Riccardo Molle, Dipartimento di Matematica, Universit\`{a} di Roma ``Tor Vergata'',
\newline\indent Via della Ricerca Scientifica n. 1, 00133,  Roma, Italy.}
\email{molle@mat.uniroma2.it}

\thanks{$^*$ Corresponding author}
\date{November 12, 2025}

\begin{abstract}
{We prove the uniqueness, asymptotics, symmetry, and orbital stability of energy ground states for 3D magnetic Gross-Pitaevskii equations under mild conditions on the electric potential and magnetic field. 
In particular, both the electric potential and the magnetic field are allowed to have singularities, and we cover in a unified approach the physically relevant Aharonov-Bohm magnetic field as well as the constant magnetic field. 
Since we are in the 3D case, the unique energy ground state (up to a phase factor) is obtained as a local minimizer, rather than a global one, by restricting the kinetic energy of candidate critical points within a suitable range. 
The qualitative analysis of the energy ground state we carry on is mainly based on a related Pohozaev identity, the implicit function theorem, and variational methods.}
\medskip

\noindent \emph{\bf Keywords:} Gross-Pitaevskii equation; Magnetic field; Energy ground state; Qualitative analysis.\medskip

\noindent \emph{\bf AMS Subject Classification:} 35A02, 35Q55, 35J20, 35Q40.
\end{abstract}

\maketitle

%%%%%%%%%%%%%%%%%%%%%%%%%%%%%%%%%%%%%%%%%%%%%%%%%%%%%%%%%%%%%%%%%%%%%%%%%%%%%%%%%%

%\hrulefill 

%{\bl If you activate the following command (in the source file), you get the list of my changes.}
%\listofchanges

%\hrulefill 

%%%%%%%%%%%%%%%%%%%%%%%%%%%%%%%%%%%%%%%%%%%%%%%%%%%%%%%%%%%%%%%%%%%%%%%%%

%%%%%%%%%%%%%%%%%%%%%%%%%%%%%%%%%%%%%%%%%%%%%%%%%%%%%%%%%%%%%%%%%%%%%%%%%%%%%%%%%%

\tableofcontents

%%%%%%%%%%%%%%%%%%%%%%%%%%%%%%%%%%%%%%%%%%%%%%%%%%%%%%%%%%%%%%%%%%%%%%%%%%%%%%%%%%

\section{Introduction and main results}
%\label{s1}
In this paper, we study the existence and qualitative properties of energy ground states for the following magnetic Gross-Pitaevskii equation
\begin{equation}\label{eq1.1}
\left\{
  \begin{array}{ll}
    (i\nabla   + A(x))^2 u + (V(x)-\la)u = |u|^2u, &\ \text{in}\ \R^3  \\
    \ds\int_{\R^3}|u|^2=c, &
  \end{array}
\right.
\end{equation}
where $i^2=-1$, $V:\R^3\to\R$ is an electric potential, $A:\R^3\to\R^3$ is a magnetic potential associated to the magnetic filed $B$ by the relation $B=\nabla\times A$, $\la$ is an unknown Lagrange multiplier, arising from the  mass constraint $\|u\|_2^2=c>0$, and the magnetic laplacian is defined by
\beq
\label{1205}
(i\nabla + A(x))^2 = -\Delta + 2iA(x)\cdot\nabla + i\, \diverg A(x)+|A(x)|^2.
\eeq

\medskip

Bose-Einstein condensation (BEC) is a phenomenon occurring at very low temperature for a highly dilute gas of bosons. 
Under the appropriate experimental conditions, most of the particles get to occupy the same quantum state.
The problem \eq{eq1.1} arises as an effective model for large dilute bosonic systems, as a consequence of the occurrence of BEC in the energy ground states. 
In BEC, the magnetic structure is involved in scattering, superfluid, quantized vortices in plasma physics \cite{PRL}. 
For the early physical background of \eq{eq1.1}, see \cite{AHS} and the references therein.

In particular, the uniqueness of energy ground states ensures a complete BEC on it. 
On the other hand, in derivation of Hartree's theory for mean-field Bose systems, to prove that many-body ground states converge (in terms of reduced density matrices) to those of the NLS functional globally, rather than up to a subsequence, the uniqueness of energy ground states usually appears as an assumed condition; see \cite{LNR1,LNR2,LNR3,LNR4}. 
Note that if $A = 0$ and $V$ is radial, one can prove the uniqueness of the ground state of the NLS by well-known ODE arguments, reviewed, for instance, in \cite{Frank}. 
Uniqueness can certainly fail when $A\neq 0$ (due to the occurrence of quantized vortices \cite{Sei}), or when $V$ has several isolated minima \cite{GS}.

\smallskip

We would also like to mention that, in the repulsive case, the Gross-Pitaevskii equation, coupled with a Poisson equation, was proposed in cosmology and astrophysics to describe the dynamics of cold dark matter made of axions or bosons in the form of Bose-Einstein condensates at absolute zero temperatures.
We can refer to the surveys \cite{Chavanis15,POM20} for the physical model and to \cite{MMRarXiv,LM22}, and references therein, for asymptotic analysis.

\medskip

In this paper, we prove the existence, uniqueness, asymptotics, symmetry, and orbital stability of the energy ground states for \eq{eq1.1}, under mild conditions on the electric potential and magnetic field.
Following \cite{Dovetta-Serra-Tilli-MA-2023},  we say that $u\in{S_c}$ is an energy ground state of problem \eq{eq1.1} if
\begin{equation*}%\label{eeq1.7}
(\mathcal{E}|_{{S_c}})'(u)=0\ \text{and}\ \mathcal{E}(u)=\inf\{\mathcal{E}(v)\, :\, v\in{S_c}, \,(\mathcal{E}|_{{S_c}})'(v)=0\},
\end{equation*}
where
\[
\mathcal{E}(v)=\frac{1}{2}\int_{\R^3}|(i\nabla + A(x))v|^2 + V(x)|v|^2 - \frac{1}{4}\int_{\R^3}|v|^4 \ \text{ and }\  {S_c}=\{v\in \Sigma_{A,V}\, :\, \|v\|^2_2=c\}
\]
(see \ref{SNot} Notations).

\medskip

For the sake of completeness, we shall first consider the existence. 
In the 2D case, the authors in \cite{CH,LNR3} describe the precise many-body limiting behavior for the energy ground state of \eq{eq1.1} by considering the following minimizing problem
$$
\inf_{u\in S_c}\mathcal{E}(u).
\eqno{(\mathcal{N}_c)}
$$
However, turning to the 3D case, it holds $\inf\limits_{{u\in S_c}}\mathcal{E}(u) = -\infty$ in physically most relevant cases such as $V(x) = |x|^2 - \rho^2(x^2_1+x^2_2)$ and $A(x)=\rho(-x_2,x_1,0)$ with $|\rho|<1$, as it is readily seen testing the functional on $u_\tau := \tau^{3/2}u(\tau\cdot)$, $\tau>0$, for a fixed ${u\in S_c}$. 
Hence, the global minimization problem $(\mathcal{N}_{{c}})$ no longer works.
We point out that the unboudedness from below of the energy functional occurs because the nonlinearity of the third  order is $L^2$-supercritical in dimension 3 (see \cite{Bellazzni-et.al.-CMP-2017,LM_PRSE_23,MRV22JDE,BMRV21,PPVV_21_JDE,Je97NA}, and references therein, for the non-magnetic case).

\medskip

To address this problem, motivated by \cite{Bellazzni-et.al.-CMP-2017,Bellazzini-Jeanjean-SJMA-2016}, we assume that $A$ and $V$ satisfy the following condition:
\begin{align}
\left\{
  \begin{array}{l}
A\in L^2_{\loc}(\R^3,\R^3)\cap C^1(\R^3\backslash\{0\}) \\[8pt]
V\in L^1_{\loc}(\R^3)\cap C^1(\R^3\backslash\{0\})\ \mbox{ with }\ \lim\limits_{|x|\to\infty}V(x) = \infty\\
\exists \bar\alpha>0 \ : \  V(x)\ge \bar\alpha|A(x)|^2\ \ \forall x\in\R^3\setminus\{0\};
\end{array}
\right.
\tag{$\mathcal{V}_1$}\label{V1}
\end{align}

\begin{align}
\left\{
  \begin{array}{cl}
  (a) &  \sup\limits_{x\in{\R^3\backslash\{0\}}}\frac{|x\cdot \nabla V(x)| + \sum\limits_{j=1}^3|x|^2|\nabla A_j|^2}{V(x)}<+\infty, \\[4pt]
    (b) &\exists\, \va_0\in\big(0,\frac{1}{3}\big),\, \alpha_0>0 \ :\  \inf\limits_{x\in{\R^3\backslash\{0\}}}\frac{\tilde{T}_{A,V,\va_0}(x)}{V(x)}\ge \alpha_0, 
  \end{array}
\right.
\tag{$\mathcal{V}_2$}\label{V2}
\end{align}
where, for $x\neq 0$,
\beq
\label{Po}
\begin{split}
\tilde{T}_{A,V,\va_0}(x) :=& \Big[\frac{1}{2}V(x) + \frac{1}{6}x\nabla V(x)\Big] \\
& +  \Big[\big(\frac{1}{2}- \frac{1}{3\va_0}\big)|A(x)|^2 - \frac{1}{6\va_0}\sum_{j=1}^3|x\cdot \nabla A_j|^2   + \frac{1}{3}\sum_{j=1}^3A_j(x)\big(x\cdot \nabla A_j(x)\big)\Big].
\end{split}
\eeq
We point out that $\tilde{T}_{A,V,\va_0}$ originates from the Pohozaev identity related to \eq{eq1.1} (see \eq{Aeq3.25}).

\smallskip

We shall find the ground state solution as a local minimizer. 
More precisely, we first study the local minimization problem
\begin{equation}\label{abeq1.2}
m^{r}_{c} :=\inf_{u\in {S_c}\cap B(r)}\mathcal{E}(u),\qquad r>0,
\end{equation}
where
$$
B(r)=\left\{u\in\Sigma_{A,V}:\|u\|^2_{\dot{\Sigma}_{A,V}}:=\int_{\R^3}|(i\nabla + A(x))u|^2 + V(x)|u|^2\le r\right\}.
$$
Then, when 
\begin{equation*}
\mathcal{M}^r_{{c}}:=\{u\in {S_c}\cap B(r):\mathcal{E}(u)=m^r_{{c}}\}\ne\emptyset,
\end{equation*}
we prove that the local minimizers are not on the boundary of ${S_c}\cap B(r)$, so that  $m^r_{{c}}$ is indeed a critical value of $\mathcal{E}|_{{S_c}}$.
Finally, in order to obtain a ground state solution in $\mathcal{M}^r_{{c}}$, it remains to prove that
$$
m^r_c=\inf\{\mathcal{E}(v):v\in{S_c}, (\mathcal{E}|_{{S_c}})'(v)=0\}.
$$
To state the stability of $u_{c,A,V}$, let us fix some notations. 
\begin{definition} (see \cite{Esteban-Lions-1989,Cazenave-Esteban-MAP-1988,Cazenave-AMS-2003}) 
A set  $Y\subset{{\Sigma}}_{A,V}$ is called orbitally stable under the flow associated with
\begin{equation}\label{keq1.1}
\left\{
  \begin{array}{ll}
    i\psi_t = (i\nabla + A(x))^2\psi + V(x)\psi - |\psi|^2\psi, & (t,x)\in\R^+\times \R^3, \\
    \psi(0,x) = u_0(x), &
  \end{array}
\right.
\end{equation}
if for any $\va>0$, there exists $\delta>0$ such that for any initial data $u_0\in{{\Sigma}}_{A,V}$ satisfying
$$
\inf_{\phi\in Y}\|u_0-\phi\|_{{\Sigma}_{A,V}}<\delta,
$$
the corresponding solution $u(t,\cdot)$ of problem \eq{keq1.1} exists globally in time and satisfies
$$
\sup_{t\ge 0}\text{dist}_{{\Sigma}_{A,V}}(u(t,\cdot),Y)<\va.
$$
\end{definition}

\begin{definition} 
%\label{D1.2}
    Problem \eqref{keq1.1} is said to be locally well-posed in $\Sigma_{A,V}$ if, for any $u_0\in\Sigma_{A,V}$, there exists $T\in(0,\infty]$ and a unique solution $\psi(t,x)\in C([0,T),\Sigma_{A,V})$ of \eq{keq1.1} with $\psi(0,\cdot) = u_0$. 
%\deleted[id=R.M.]{Moreover, we say that 
%{\ye \eqref{keq1.1} satisfies the blow-up alternative property if %either $T=\infty$ or $T<\infty$ and $\lim\limits_{t\to %T^{-}}\|\psi(t,\cdot)\|_{{\dot{\Sigma}}_{A,V}} = \infty$.}}
\end{definition}

\medskip

Let us state the existence and stability result.

\begin{theorem}\label{th1.1}
For any fixed $r>0$:

\smallskip

\begin{itemize}
    
\item[1.] let $A,V$ satisfy \eq{V1}, then there exists a explicit $c_{A,V}(r)>0$ such that \eq{eq1.1} has a couple $(u_{{c},A,V},\la_{{c},A,V})$ of weak solution with $u_{{c},A,V}\in\mathcal{M}^{r}_{{c}}$ if $0<{c}<{c}_{A,V}(r)$;

\smallskip

\item[2.] let $A,V\in C^{1,\a}(\R^3\backslash\{0\})$ for some $\a\in (0,1)$ and satisfy \eq{V1}-\eq{V2}, then there exists a explicit $\tilde{c}_{A,V}(r)\leq c_{A,V}(r)$ such that $u_{{c},A,V}$ is an energy ground state of \eq{eq1.1} if $0<c<\tilde{c}_{A,V}(r)$;

\smallskip

\item[3.] assume, moreover, that \eq{keq1.1} is locally well-posed in $\Sigma_{A,V}$ with solutions preserving both mass and energy,  then the set $\mathcal{M}^r_{{c}}$ is orbitally stable under the flow corresponding to \eq{keq1.1} if $0<c<c_{A,V}(r)$.
\end{itemize}
\end{theorem}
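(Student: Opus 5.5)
The plan is a local minimization in the spirit of Bellazzini–Jeanjean, organized around three comparisons of energy levels: inside the ball $B(r)$, at its boundary, and at critical points lying outside it.

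\emph{Step 1: existence of a minimizer.} Under \eq{V1}, $\lim_{|x|\to\infty}V=\infty$ together with $V\ge\bar\alpha|A|^2$ gives a compact embedding $\Sigma_{A,V}\hookrightarrow L^2(\R^3)$. We also have the diamagnetic inequality $|\nabla|u||\le|(i\nabla+A)u|$. Combining it with Gagliardo–Nirenberg gives
\[
\int|u|^4\le C_{GN}\|u\|_2\|\nabla|u|\|_2^3\le C_{GN}c^{1/2}\|u\|^3_{\dot\Sigma_{A,V}}.
\]
Interpolating between $L^2$ and $L^6$, the embedding into $L^4$ is then compact as well. So $\mathcal{E}$ is weakly lower semicontinuous on the weakly closed set $S_c\cap B(r)$, the constraint $\|u\|_2^2=c$ passes to the limit, and $\mathcal M^r_c\neq\emptyset$ for every $c$.

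\emph{Step 2: minimizers are interior.} Here I would choose $c_{A,V}(r)$ explicitly. Write $\lambda_1=\inf\{\|u\|^2_{\dot\Sigma}:\|u\|_2=1\}>0$ and $\|u\|^2_{\dot\Sigma}=t$. Then
\[
\tfrac12 t-\tfrac14C_{GN}c^{1/2}t^{3/2}\le\mathcal{E}(u)\le\tfrac12 t.
\]
Hence $\inf_{S_c\cap\partial B(r)}\mathcal{E}\ge \tfrac12 r-\tfrac14C_{GN}c^{1/2}r^{3/2}$. On the other hand, $m_c^r\le\tfrac12 c\lambda_1+o$ is attained near the first eigenfunction whenever $c\lambda_1<r$. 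Choosing $c$ small, for instance $c\lambda_1<r/2$ and $C_{GN}c^{1/2}r^{1/2}$ small, gives $m^r_c<\inf_{\partial B(r)\cap S_c}\mathcal{E}$. Minimizers therefore lie in the interior of $B(r)$, and a Lagrange multiplier argument produces $\lambda_{c,A,V}$ and a weak solution.

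\emph{Step 3: ground state property.} This is the main obstacle. We must show that any constrained critical point $v$ with $\|v\|^2_{\dot\Sigma}>r$ satisfies $\mathcal{E}(v)\ge m_c^r$. I would use the Pohozaev identity (the one behind \eq{Po}). This is where $C^{1,\alpha}$ regularity off the origin is needed: it justifies the identity via Schauder estimates and cutoff near $0$ and infinity, with \eq{V2}(a) controlling the error terms. The identity gives
\[
\tfrac34\int|v|^4=\int|\nabla\text{-part}|^2-\text{(potential/magnetic virial terms)}.
\]
The virial terms are handled with Young's inequality with parameter $\va_0$, which is exactly how the $\frac1{3\va_0}$ terms in $\tilde T$ arise. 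Using \eq{V2}(b), I would deduce
\[
\mathcal{E}(v)=\tfrac12\|v\|^2_{\dot\Sigma}-\tfrac14\int|v|^4\ge \kappa\|v\|^2_{\dot\Sigma}
\]
for some $\kappa>0$ depending on $\alpha_0,\va_0$, and hence $\mathcal{E}(v)\ge\kappa r$. Since $m_c^r\le\tfrac12 c\lambda_1$, it then suffices to shrink to $\tilde c_{A,V}(r)$ with $\tfrac12 c\lambda_1<\kappa r$. The delicate point is making this coercivity estimate uniform; $\va_0<1/3$ must keep the $|\nabla v|^2$ coefficient positive.

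\emph{Step 4: orbital stability.} I would follow Cazenave–Lions by contradiction. Suppose there are initial data $u_{0,n}$ converging to $\mathcal M^r_c$ and times $t_n$ with $\mathrm{dist}(u_n(t_n),\mathcal M^r_c)\ge\va$. Conservation of mass and energy gives $\mathcal{E}(u_n(t))\to m_c^r$. Because of the energy gap at $\partial B(r)$ from Step 2 and continuity of the flow, the solution cannot leave $B(r)$; it therefore stays bounded in $\Sigma_{A,V}$, which also yields global existence. After rescaling the mass to exactly $c$, $u_n(t_n)$ is a minimizing sequence. By the compactness of Step 1 and convergence of the norms, it converges strongly to an element of $\mathcal M^r_c$, a contradiction.
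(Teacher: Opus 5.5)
Your proposal is correct and follows essentially the paper's proof. A Gagliardo--Nirenberg/diamagnetic energy gap keeps minimizers strictly inside $B(r)$; your choice $c\hat\la_{1,A,V}<r/2$, $C_{GN}\sqrt{cr}<1$ is the paper's $F_{A,V}(\sigma,r)$ at $\sigma=\tfrac12$, which the paper then maximizes over $\sigma$. The Pohozaev identity with Young's inequality and \eqref{V2} then gives $\mathcal{E}(v)\ge\kappa\|v\|^2_{\dot{\Sigma}_{A,V}}$ for every constrained critical point, and stability follows from the Cazenave--Lions contradiction argument.

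The only minor difference is in Step 4. You keep the flow inside $B(r)$ via the quantitative gap, which is uniform for masses near $c$, whereas the paper uses compactness of $\mathcal{M}^r_c$ and \eqref{ntang}. Either way one needs $\sup_{\mathcal{M}^r_c}\|u\|^2_{\dot{\Sigma}_{A,V}}<r$ to ensure the perturbed data start inside $B(r)$; your lower bound $\tfrac12 t-\tfrac14 C_{GN}\sqrt{c}\,t^{3/2}$ is increasing on $[0,r]$, so it indeed supplies this.
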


    Antonelli et al. in  \cite{Antonelli-Marahrens-DCDS-2012,Antonelli-Carles-Silva-CMP-2015} proved that \eq{keq1.1}  is locally well-posed in $\Sigma_{A,V}$, for the physically relevant case presented in the following Example \ref{ex1}. 
    Moreover, they showed that if $u(t,x)$ is a solution, then the mass and energy are preserved for all $t\in[0,T)$, where $T=\infty$ or $T<\infty$ and $\lim\limits_{t\to T^{-}}\|u(t,\cdot)\|_{{\dot{\Sigma}}_{A,V}} = \infty$.

\medskip

 Appendix B provides an estimate of $c_{A,V}(r)$ and $\tilde c_{A,V}(r)$. 
In particular,  see \eq{1536} and \eq{1537},
\beq
\label{1354}
\sup_{r>0}c_{A,V}(r)=:c_*\in (0,\infty),\qquad  \sup_{r>0}\tilde c_{A,V}(r)=:\tilde c_*\in (0,c_*].
\eeq
Then, the following straightforward consequence of Theorem \ref{th1.1} can be stated.

\begin{theorem}\label{th1.1Bis}
\begin{itemize}
\item[1.] If $A,V$ satisfy \eq{V1}, then there exists a explicit $c_*>0$ such that \eq{eq1.1} has a couple $(u_{{c},A,V},\la_{{c},A,V})$ of local minimum weak solution for all  $0<{c}<{c}_*$;

\smallskip

\item[2.] let $A,V\in C^{1,\a}(\R^3\backslash\{0\})$ for some $\a\in (0,1)$ and satisfy \eq{V1}-\eq{V2}, then there exists a explicit $\tilde{c}_*\leq c_*$ such that $u_{{c},A,V}$ is an energy ground state of \eq{eq1.1} if $0<c<\tilde{c}_*$.
\end{itemize}
\end{theorem}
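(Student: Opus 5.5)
Theorem \ref{th1.1Bis} is obtained from Theorem \ref{th1.1} by optimizing the choice of the radius $r$, using the definitions in \eq{1354}. For part 1, fix $c\in(0,c_*)$. Since $c_*=\sup_{r>0}c_{A,V}(r)$ and $c<c_*$, the definition of supremum gives some $r_c>0$ with $c<c_{A,V}(r_c)$. We then apply Theorem \ref{th1.1}, part 1, with $r=r_c$. This yields a weak solution $(u_{c,A,V},\la_{c,A,V})$ of \eq{eq1.1} with $u_{c,A,V}\in\mathcal{M}^{r_c}_c$.

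It remains to check that this solution is a local minimum in the sense intended. The argument of Theorem \ref{th1.1} shows that minimizers in $\mathcal{M}^{r_c}_c$ do not lie on $\partial B(r_c)$. Hence $u_{c,A,V}$ lies in the relative interior of $S_c\cap B(r_c)$ and minimizes $\mathcal{E}$ there, so it is a local minimizer of $\mathcal{E}|_{S_c}$ and a local minimum weak solution. The constant $c_*$ is explicit because $c_{A,V}(r)$ is an explicit function of $r$. By \eq{1536} in Appendix B, its supremum is finite and positive, and that formula gives $c_*$ explicitly.

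Part 2 follows the same scheme. Given $c\in(0,\tilde c_*)$, we pick $r_c$ with $c<\tilde c_{A,V}(r_c)$, which is possible since $\tilde c_*=\sup_r\tilde c_{A,V}(r)$. We then invoke Theorem \ref{th1.1}, part 2, under the additional $C^{1,\a}$ and \eq{V2} hypotheses. This shows that $u_{c,A,V}$, built with radius $r_c$, is an energy ground state. The inequality $\tilde c_*\le c_*$ comes from taking suprema in $\tilde c_{A,V}(r)\le c_{A,V}(r)$.

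The one delicate point is that the solution in part 2 should be the same object as in part 1. The statement writes $u_{c,A,V}$ in both places, while the radii chosen for a given $c$ could differ. I expect this to be the main, though minor, obstacle. The cleanest fix is to pick one radius $r_c$ that works for both parts: since $\tilde c_{A,V}(r)\le c_{A,V}(r)$, any $r_c$ with $c<\tilde c_{A,V}(r_c)$ also satisfies $c<c_{A,V}(r_c)$. Alternatively, one can note that any energy ground state has the least energy among all critical points of $\mathcal{E}|_{S_c}$. Under uniqueness, proved later in the paper, this energy is attained by the same function independently of the admissible $r$. With this choice of $r_c$ the argument needs no further analysis.
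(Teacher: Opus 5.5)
Your proposal is correct and matches the paper's own argument: Theorem~\ref{th1.1Bis} is stated as a direct consequence of Theorem~\ref{th1.1} through the suprema in \eqref{1354}, which are computed (and in fact attained) in \eqref{1536}--\eqref{1537}, and choosing a single radius $r_c$ with $c<\tilde c_{A,V}(r_c)\le c_{A,V}(r_c)$ correctly makes $u_{c,A,V}$ the same function in both parts. You should drop the alternative via uniqueness, because Theorem~\ref{th1.3} requires simplicity of $\hat\la_{1,A,V}$ and $c<c_0$, and neither is a hypothesis here.
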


Let us observe that, in general, we cannot expect that for every $c>0$ there exists $r(c)>0$ such that the infimum $m_{r(c)}^c$ is achieved, in the interior of  ${S_c}\cap B(r)$, as the following result shows.

\begin{proposition}
\label{cr1.4} 
Let $A,V$ satisfy \eq{V1}-\eq{V2}, and let us denote
$$
c^* = 2\frac{\hat{\la}_{1,A,V}}{\|\hat{\psi}_{1,A,V}\|^4_4},
$$
where $\hat{\psi}_{1,A,V}$ is the first eigenfunction of $\hat{L}_{A,V} = (i\nabla + A)^2 + V(x)$ (see Lemma \ref{Ale2.8}).
If $c\ge c^*$, then the infimum $m^r_c$ cannot be achieved by a solution of \eq{eq1.1}, for any $r>0$.
%Especially,  all possible energy ground states of \eq{eq1.1} are not local minimizers,  if $c\ge c^*$.
\end{proposition}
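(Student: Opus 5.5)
The plan is to argue by contradiction, comparing an upper bound for $m^r_c$ (obtained by testing with the first eigenfunction) with a lower bound for the energy of \emph{any} solution of \eq{eq1.1} (obtained from the Pohozaev identity and \eq{V2}). I normalize $\|\hat\psi_{1,A,V}\|_2=1$, as the definition of $c^*$ suggests, and set $\phi:=\sqrt c\,\hat\psi_{1,A,V}\in S_c$. Then $\|\phi\|^2_{\dot\Sigma_{A,V}}=c\hat\la_{1,A,V}$ and
$$
\mathcal E(\phi)=\frac{c}{2}\hat\la_{1,A,V}-\frac{c^2}{4}\|\hat\psi_{1,A,V}\|_4^4=\frac c4\|\hat\psi_{1,A,V}\|_4^4\,(c^*-c),
$$
so $\mathcal E(\phi)\le0$ exactly when $c\ge c^*$. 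This is the only place where the threshold $c^*$ enters.

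\textbf{Step 1 (upper bound).} Suppose $c\ge c^*$ and that $m^r_c$ is attained at some $u\in\mathcal M^r_c$ which solves \eq{eq1.1} with multiplier $\la$. By the variational characterization of $\hat\la_{1,A,V}$ in Lemma \ref{Ale2.8}, every $v\in S_c$ satisfies $\|v\|^2_{\dot\Sigma_{A,V}}\ge c\hat\la_{1,A,V}$, with equality only on the first eigenspace. Hence either $S_c\cap B(r)$ consists only of multiples of first eigenfunctions (the case $r=c\hat\la_{1,A,V}$), or $r>c\hat\la_{1,A,V}$ and $\phi\in S_c\cap B(r)$. In both cases I get $m^r_c\le\mathcal E(\phi)\le0$. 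In the first case I would check directly that $e^{i\theta}\phi$ is admissible and still has energy $\mathcal E(\phi)$; if the first eigenspace is not simple, I would replace $\phi$ by the element of that eigenspace with maximal $L^4$ norm, which only decreases the energy.

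\textbf{Step 2 (lower bound, the main step).} I will show that every weak solution $u\neq0$ of \eq{eq1.1} satisfies $\mathcal E(u)>0$, which contradicts Step 1. Testing the equation with $\bar u$ gives
$$
\|u\|^2_{\dot\Sigma_{A,V}}-\la c=\|u\|_4^4 .
$$
The Pohozaev identity \eq{Aeq3.25}, obtained by testing with $x\cdot\nabla\bar u$, eliminates $\la$ and yields
$$
\frac34\|u\|_4^4=\int|(i\nabla+A)u|^2-\frac12\int (x\cdot\nabla V)|u|^2+(\text{cross terms involving }x\cdot\nabla A_j).
$$
I then insert this into $\mathcal E(u)=\frac12\|u\|^2_{\dot\Sigma_{A,V}}-\frac14\|u\|_4^4$. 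The magnetic cross terms are handled by Young's inequality with parameter $\va_0$: this is exactly the mechanism that produces the terms $-\frac1{3\va_0}|A|^2$ and $-\frac1{6\va_0}\sum_j|x\cdot\nabla A_j|^2$ in $\tilde T_{A,V,\va_0}$, while part of the kinetic term is absorbed since $\va_0<\frac13$. The outcome I aim for is
$$
\mathcal E(u)\ge\kappa\int|(i\nabla+A)u|^2+\int\tilde T_{A,V,\va_0}(x)|u|^2\ge\alpha_0\int V|u|^2>0
$$
for some $\kappa\ge0$, by \eq{V2}(b).

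The obstacle I expect is making the Pohozaev identity rigorous under the singularities at $0$ and the growth at infinity. For this I would rely on \eq{V2}(a), which ensures $x\cdot\nabla V\,|u|^2$ and $|x|^2|\nabla A_j|^2|u|^2$ are integrable for $u\in\Sigma_{A,V}$, and on the $C^{1,\a}$ regularity away from the origin, together with a cut-off argument on annuli $\{\va<|x|<1/\va\}$ before letting $\va\to0$. Since the paper already states \eq{Aeq3.25} as the source of $\tilde T_{A,V,\va_0}$, I would invoke it rather than re-derive it. Combining the two steps gives $0<\mathcal E(u)=m^r_c\le0$, a contradiction, for every $r>0$.
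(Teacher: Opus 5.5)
Your proposal is correct and follows essentially the same argument as the paper: test $m^r_c$ with $\sqrt{c}\,\hat\psi_{1,A,V}$ to get $m^r_c\le \frac c2\big(\hat\la_{1,A,V}-\frac c2\|\hat\psi_{1,A,V}\|_4^4\big)\le 0$. This contradicts the strict positivity $\mathcal E(v)>0$ of every solution, which the paper obtains from the Pohozaev-based bound \eqref{Aeq3.25}--\eqref{Aeq3.26} together with \eqref{V2}(b). Your case split in Step 1 is unnecessary: Lemma \ref{le2.7} already forces $r\ge c\hat\la_{1,A,V}$, and $B(r)$ is closed, so $\sqrt c\,\hat\psi_{1,A,V}\in S_c\cap B(r)$ directly; no replacement inside a non-simple eigenspace is needed either, since $c^*$ is defined with that same $\hat\psi_{1,A,V}$.
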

The proof of Proposition \ref{cr1.4} is contained in \S \ref{S3}.

\begin{remark}
    \label{Rreg}
The regularity issue of a solution $u\in\Sigma_{A,V}$ of \eqref{eq1.1} is quite standard. 
We refer to \cite[Propositions 2.2. and 2.5]{CS05TMNA} for $u\in L^\infty(\R^3)$ and the exponential decay of $|u|$ (see also \cite{K97MathZ}).
In particular, taking also into account \eqref{1205}, by Moser's iteration technique, it is standard to prove $u\in W^{2,q}_{\loc}(\R^3)$ for $2\le q <\infty$, and therefore to $C^{1,\alpha}(\R^3\backslash\{0\})$ for any $\alpha\in (0,1)$.
If a more regularity assumption is imposed on $A$ and $V$, i.e. $A\in C^{1,\alpha}(\R^3\backslash\{0\})$ and $V\in C^{0,\alpha}(\R^3\backslash\{0\})$, then $u$ is a classical solution in $\R^3\backslash\{0\}$.
\end{remark}

The next examples illustrate physical scenarios that satisfy \eq{V1} and \eq{V2},
that is, this work presents a unified approach to these physical problems.

\begin{example}\label{ex1}
 Let $x=(x_1,x_2,x_3)\in\R^3$, and denote $x^{\bot} =(-x_2,x_1,0)$. 
 A typical class of examples is, for $k\in\N^+$  and $\rho_k\in\R$,
\beq
\label{1111}
A_{\rho_k}(x) = \rho_kx^{\bot},\qquad x\in\R^3, 
\eeq
with potential of the form
$
V_{\rho_k}(x) = |x|^{2k} + k\sgn(k-1) - \rho^2_k |x^{\bot}|^2.
$

\smallskip

When $A$ is of type \eqref{1111}, equation \eq{eq1.1} is used in \cite{X.Luo-T.Yang-JDE-2020,Bellazzni-et.al.-CMP-2017,Cazenave-AMS-2003,Guo-Li-Luo-JFA-2024,Guo-Luo-Yang-ARMA,Guo-Luo-Peng-SIAM-2023} to describe rotating BECs in different traps.
Letting $|\rho_k|<1$, since $V_{\rho_k}(x) \ge (1-\rho^2_k)|A_{\rho_k}(x)|^2$, condition \eq{V1} is verified. 
Obviously,   \eq{V2}$(a)$ holds.
Furthermore, if $|\rho_k|<\min\big\{\frac{\sqrt{3+2k}}{3},1\big\}$ and $\va_0 \in \Big(\frac{3}{3+2k}\rho^2_k,\frac{1}{3}\Big)$, it holds that
$$
\inf_{|x|>0}\frac{\tilde{T}_{A_{\rho_k},V_{\rho_k},\va_0}}{V_{\rho_k}} = \inf_{|x|>0}\frac{\Big(\frac{1}{2}+\frac{k}{3}\Big)|x|^{2k} + \frac{k\sgn(k-1)}{2}- \frac{\rho^2_k}{2\va_0}|x^{\bot}|^2}{V_{\rho_k}(x)}
 >\min\left\{\frac{1}{2}+\frac{k}{3}-\frac{\rho_k^2}{2\va_0},\frac{1}{2}\right\},
$$
which gives \eq{V2}$(b)$, too.
\end{example}

\medskip

\begin{example}\label{ex2}\
Another physically relevant magnetic field is the following singular magnetic potential with homogeneity of order $-1$
$$
\mathcal{A}_{\hat{\a}}(x) = \frac{{\hat{\alpha}}}{|x|^2} \Big(-x_2,x_1,0\Big),\qquad x\in\R^3\setminus\{ 0\},\ \hat{\alpha}\in\R\backslash\{0\},
$$
which is  introduced in \cite{V.Felli-A.Ferrero-S.Terracini-JEMS-2011}. 
Let $V_\be(x) = \frac{\be}{|x|^2} + |x|^2$, with $\be>|\hat\alpha|$ be a parameter.  
Obviously, \eq{V1} and \eq{V2}$(a)$ hold. 
Moreover, a tedious computation shows that there is a positive constant $C_{\va_0}$ that depends only on $\va_0$  such that
$$
\inf_{|x|>0}\frac{\tilde{T}_{\mathcal{A}_{\hat{\a}},V_\be ,\va_0}(x)}{V_{\be}(x)}\ge \inf_{|x|>0}\frac{5|x|^2/6 + (\be/6- |{\hat{\a}}|^2C_{\va_0})|x|^{-2}}{|x|^2 + \be|x|^{-2}} = \frac{1}{6} - \frac{C_{\va_0}|{\hat{\a}}|^2}{\be}.
$$
Then \eq{V1} and \eq{V2} are satisfied by letting $\be > 6C_{\va_0}|\hat{\a}|^2$.
\end{example}

\medskip

%Theorem \ref{th1.1} implies that the limit of $u_{c,A,V}/\sqrt{c}$, as $c\to 0^+$, will be one of the first eigenfunction of $(i\nabla + A(x))^2 + V(x)$. Hence, it is reasonable of using implicit function theorem to derive
Next, we consider the uniqueness and symmetry of energy ground states for \eq{eq1.1}. 
Let us introduce $G = \{g_{\theta}:\theta\in[0,2\pi)\}$, where $g_\theta$ have the form
\begin{equation}\label{ueq1.13}
g_{\theta}^+ = \left(
               \begin{array}{ccc}
                 \cos\theta & -\sin\theta & 0\\
                 \sin\theta & \cos\theta  & 0 \\
                 0 & 0 &  1 \\
               \end{array}
             \right)
             \qquad \mbox{ or } \qquad
  g_{\theta}^- = \left(
               \begin{array}{ccc}
                 \cos\theta & -\sin\theta & 0\\
                 \sin\theta & \cos\theta  & 0 \\
                 0 & 0 &  -1 \\
               \end{array}
             \right).           
\end{equation}
Obviously, $G$ is a subgroup of $O(3)$. 

\smallskip

Subsequently, we say that a function $u:\R^3\to \R^3$ is free of vortices if it is divergence-free.
We have

\begin{theorem}\label{th1.3}
Let $A,V$ satisfy \eq{V1}-\eq{V2} and   $A,V\in C^{1,\a}(\R^3\backslash\{0\})$, for some $\a\in (0,1)$, and the first eigenvalue of $(i\nabla + A(x))^2 + V(x)$ is simple.

1. Then there exists ${c}_0>0$ such that the energy ground state of \eq{eq1.1}, up to a phase factor, is unique if $0<{c}<{c}_0$.

2. Assume, moreover, that $A$ is linear, skew-symmetric, $A\circ g = g\circ A$, $\forall g\in G$, and $V(x)+|A(x)|^2$ is cylindrically symmetric and radially nondecreasing w.r.t. $(x_1,x_2)$. 
Then the energy ground state of \eq{eq1.1} is cylindrically symmetric, positive up to a constant phase and free of vortices  if  $0<{c}<c_0$.

\end{theorem}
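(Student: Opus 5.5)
The plan is to prove uniqueness by a small-mass bifurcation argument from the first eigenvalue of $\hat L_{A,V}=(i\nabla+A)^2+V$, followed by the implicit function theorem. Symmetry then comes from uniqueness plus the invariance of the problem under $G$.

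\textbf{Step 1: blow-down of ground states.} Let $u_c$ be any energy ground state with $c\to0^+$ and multiplier $\la_c$. By Theorem \ref{th1.1}(2), for $c<\tilde c_{A,V}(r)$ the energy ground state level equals $m^r_c$. I will first show that every energy ground state lies in $B(r)$ with $\|u_c\|^2_{\dot\Sigma_{A,V}}=O(c)$. The tool is the Pohozaev identity (the one producing $\tilde T_{A,V,\va_0}$) combined with \eqref{V2}(b). This gives, for any critical point on $S_c$, a dichotomy: either the kinetic energy is $O(c)$, or it exceeds a threshold of order $1/c$, and in the latter case the energy is larger than $m^r_c\le \frac12\hat\la_{1,A,V}c$.

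Set $w_c=u_c/\sqrt c$, so $\|w_c\|_2=1$ and $w_c$ is bounded in $\Sigma_{A,V}$. The embedding of $\Sigma_{A,V}$ into $L^2$ is compact because $V\to\infty$, and the Gagliardo–Nirenberg (diamagnetic) inequality gives $\|u_c\|_4^4=O(c^2)$. Hence
\[
\mathcal E(u_c)=\tfrac c2\langle \hat L_{A,V}w_c,w_c\rangle+O(c^2)\le \tfrac c2\hat\la_{1,A,V}+O(c^2).
\]
Therefore $w_c\to e^{i\theta}\hat\psi_{1,A,V}$ in $\Sigma_{A,V}$, up to a phase, and $\la_c\to\hat\la_{1,A,V}$. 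Here the simplicity of the first eigenvalue is used.

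\textbf{Step 2: implicit function theorem.} Rewrite the equation as
\[
F(w,\mu,c):=\big(\hat L_{A,V}w-\mu w-c|w|^2w,\ \|w\|_2^2-1\big)=0.
\]
Regard $F$ as a map on real Banach spaces. I will fix the phase by imposing $\mathrm{Im}\int w\bar{\hat\psi}_{1}=0$, restricting to the real-orthogonal complement of $i\hat\psi_1$. The derivative at $(\hat\psi_1,\hat\la_1,0)$ is
\[
(h,\nu)\mapsto\big(\hat L_{A,V}h-\hat\la_1 h-\nu\hat\psi_1,\ 2\mathrm{Re}\int h\bar{\hat\psi}_1\big).
\]
By simplicity of $\hat\la_1$ and the Fredholm alternative, this is an isomorphism once the phase direction $i\hat\psi_1$ is quotiented out. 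The IFT then yields a unique branch $(w(c),\mu(c))$ near $(\hat\psi_1,\hat\la_1)$ for small $c$. By Step 1, every ground state, after fixing its phase, eventually lies in this neighbourhood. Hence it coincides with $\sqrt c\,w(c)$, which proves part 1.

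\textbf{Main obstacle.} I expect the main obstacle to be the functional setting for the IFT. The cubic term must be $C^1$ from the form domain $\Sigma_{A,V}$ into its dual, which holds via $H^1$-type Sobolev embeddings through the diamagnetic inequality even with singular $A,V$. The kinetic dichotomy in Step 1 must also be uniform in $c$.

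\textbf{Part 2: symmetry and positivity.} Use the assumptions that $A$ is linear and skew-symmetric and commutes with $G$, and that $V+|A|^2$ is cylindrical. Then for $g\in G$ the map $u\mapsto u\circ g$ preserves $\mathcal E$ and $S_c$; here skew-symmetry gives $\diverg A=0$ and that $A\cdot\nabla$ commutes with $g$. So $u_c\circ g$ is again a ground state, and by uniqueness $u_c\circ g=e^{i\theta(g)}u_c$. Continuity in $g$ and the limit $w_c\to\hat\psi_1$ force $\theta(g)$ to be $0$ once $\hat\psi_1$ is shown $G$-invariant. To see this, apply the same argument to the linear problem, where simplicity gives $\hat\psi_1\circ g=e^{i\theta}\hat\psi_1$, and continuity in the connected rotation component gives $\theta=0$.

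For positivity, use that $A$ is a rotation field: a cylindrically symmetric $u$ satisfies $A\cdot\nabla u=0$. So on cylindrical functions $(i\nabla+A)^2u=-\Delta u+|A|^2u$, and the problem reduces to a real Schrödinger problem with potential $V+|A|^2$. For that problem, replacing $u$ by $|u|$ together with Schwarz symmetrization in $(x_1,x_2)$, using that the potential is nondecreasing, shows the ground state is positive up to a phase. Finally, being free of vortices (zero current circulation) follows since $u=e^{i\theta}|u|$ has constant phase.
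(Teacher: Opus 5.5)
Your Part 1 is essentially the paper's argument. The paper also applies the implicit function theorem at $(\hat\psi_1,\hat\la_1,c=0)$, writing $w=(1+s)\varphi+u$ with $u\in V_1^\perp$, which fixes the phase exactly as your condition $\Im\int w\overline{\hat\psi_1}=0$ does. It then imposes the mass constraint by a second application in $s$, and concludes from the convergence of $u_c/\sqrt c$ to a phase of $\hat\psi_1$.

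Two small corrections to Part 1:
\begin{itemize}
\item In Step 1 no dichotomy, and no threshold of order $1/c$, is needed. The Pohozaev identity with \eqref{V2}(b) gives $\mathcal{E}(v)\ge\kappa\|v\|^2_{\dot\Sigma_{A,V}}$ for every constrained critical point (see \eqref{Aeq3.25}--\eqref{Aeq3.26}). Hence $\mathcal{E}(u_c)=m^r_c\le\frac12\hat\la_1 c$ gives $\|u_c\|^2_{\dot\Sigma_{A,V}}=O(c)$ directly.
\item In Step 2 your derivative is injective on the phase-fixed space, but its range misses $(i\hat\psi_1,0)$: pairing the first component with $\hat\psi_1$, you would need $-\nu=i$. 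So it is not onto $\Sigma^*_{A,V}\times\R$. Let $\mu\in\C$, which makes it an isomorphism, and recover $\Im\mu=0$ afterwards by testing the equation with $\bar w$. The paper's isomorphism claim needs the same repair.
\end{itemize}

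The genuine gap is in Part 2, at ``continuity in the connected rotation component gives $\theta=0$''. Uniqueness gives $u\circ g=e^{i\theta(g)}u$, so $g\mapsto e^{i\theta(g)}$ is a continuous character of $G$. On the rotations $g^+_\phi$ it equals $e^{in\phi}$ for some $n\in\Z$, and continuity or connectedness says nothing about $n$.

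The case $n\neq0$ is exactly a vortex $e^{in\vartheta}\rho(r,x_3)$ in cylindrical coordinates. Then $|u|$ is cylindrically symmetric and everything you derived still holds. The same applies to $\hat\psi_1$: simplicity only gives $\hat\psi_1\circ g^+_\phi=e^{in_0\phi}\hat\psi_1$.

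Since $A=hx^{\bot}$, the quadratic form of $\hat L_{A,V}$ on the $n$-th angular sector is $\int|\nabla_{r,x_3}\rho|^2+\big(\frac{n^2}{r^2}-2hn+V+|A|^2\big)|\rho|^2$. The term $\int(\frac{n^2}{r^2}-2hn)|\rho|^2$ has no sign, so neither the symmetry nor the monotonicity of $V+|A|^2$ tells you that $n_0=0$. Consequently your step ``$A\cdot\nabla u=0$ on cylindrical functions'' applies to $|u|$ but not to $u$. The comparison between $u$ and $|u|$ also fails, because $\mathcal{E}(u)=J(u)+\Re\int i(A\cdot\nabla u)\bar u$ and the cross term can be negative.

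What is missing is precisely the vanishing of the angular current \eqref{eq5-6}. The paper obtains it from $\int x|v|^2=0$ (a consequence of $|v\circ g|=|v|$) together with a result of Bonheure--Nys--Van Schaftingen. It then uses $\mathcal{E}(u)=J(u)$ and the diamagnetic chain \eqref{abeq5.4}--\eqref{eq5-6-2} to show that $|u|$ is also a minimizer, whence $|\nabla u|=|\nabla|u||$ and the phase is constant.

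Note that $\int x|v|^2=0$ by itself does not exclude $n\neq0$, since a vortex has zero centre of mass. So whichever route you take, zero winding is a genuine input. Possible sources are:
\begin{itemize}
\item a spectral comparison showing that the $n=0$ sector carries the bottom of the spectrum of $\hat L_{A,V}$;
\item a continuation $A\to tA$ from $t=0$ along which the first eigenvalue stays simple (compare Proposition~\ref{leA.1}).
\end{itemize}

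Once $u\circ g=u$ is available, the rest of your Part 2 works, and it is simpler than you wrote. In that case $|u|\in\mathcal{M}^r_c$, so Part 1 already gives $u=e^{i\theta}|u|$.
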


\begin{remark}
\label{R1.7}
For the two dimensional case, i.e., the problem
\begin{equation*}
%\label{eq1.1bis}
\left\{
  \begin{array}{ll}
    (i\nabla + A(x))^2 u + (V-\la)u = |u|^2u, &\ \text{in}\ \R^2  \\
    \ds\int_{\R^2}|u|^2=c, &
  \end{array}
\right.
\end{equation*}
we can also obtain results that are similar to the ones stated above, by the same proof as in this paper. 
In particular, this remark applies to Example 2, which in two dimensions provides the physically relevant Aharonov-Bohm (AB) magnetic field
$$
\hat{\mathcal{A}}_{\hat\a} = \hat\a\Big(-\frac{x_2}{|x|^2},\frac{x_1}{|x|^2}\Big),\quad x=(x_1,x_2)\in\R^2,\  \hat\a\in\R\setminus\{0\}.
$$
In \cite{Aharonov0Bohm-PRL-1959},  Aharonov and Bohm used the equation
\beq
\label{ABeq}
\left[\frac{\partial^2}{\partial r^2} + \frac{1}{r}\frac{\partial}{\partial r} + \frac{1}{r^2}\Big(\frac{\partial}{\partial \theta} + i\hat\a\Big)^2+k^2\right]\psi=0
\eeq
as a mathematical model to interpret experiments that show the importance of electromagnetic potentials in quantum theory. 
For a description of the physical model, we also refer to \cite{V.Felli-A.Ferrero-S.Terracini-JEMS-2011}. 
Equation \eq{ABeq} can be seen as the linear part of \eq{eq1.1} under (AB) potential into cylindrical coordinates when $k=0$ (see the decomposition in Lemma \ref{able5.1}, for example).
We need to point out that our method cannot tackle the Aharonov-Bohm magnetic potential in the three-dimensional case, because
$$
\hat{\mathcal{A}}_{\hat\a}(x) = \hat\a \Big(\frac{-x_2}{x^2_1+x^2_2},\frac{x_1}{x^2_1+x^2_2},0\Big),\quad x\in\R^3,\ x_1^2+x_2^2\neq 0,
$$
does not verify $|\hat{\mathcal{A}}_{\hat\a}|^2 \in L^1_{\loc}(\R^3)\cap C^1(\R^3\setminus\{0\})$. 
For further study of (AB) potential, we refer the reader to \cite{Bonheure-RMP}.
\end{remark}

\begin{remark}
%\label{re1.3}
Theorem \ref{th1.1} gives explicit ranges on the mass $c$ that ensure the existence of local minimizers and stable energy ground states, even if we are facing quite general electric potentials and magnetic fields; see detailed expression of the threshold on mass in Appendix \ref{AB}. 
This, on the one hand, generalizes the main results established in \cite{X.Luo-T.Yang-JDE-2020} dealing with nonlinear Schr\"{o}dinger equation with rotation. 
On the other hand,  it covers the case that $V$ and $A$ have singular points, especially the physically relevant AB potential (Example \ref{ex2} and Remark \ref{R1.7}), which, compared with known results, such as \cite{Guo-Zeng-Zhou-Poincare-2016,Guo-Luo-Peng-SIAM-2023,Guo-Luo-Yang-ARMA,Bellazzini-Jeanjean-SJMA-2016,Bonheure-Nys-Schaftingen-JMPA-2019,Bellazzni-et.al.-CMP-2017,X.Luo-T.Yang-JDE-2020},  is an interesting advance in the BEC theory.
\end{remark}

Extending the spectrum results for non-magnetic coupling Laplacian $-\Delta + V$ to the magnetic Laplacian case $(i\nabla + A)^2 + V$ has a long history, see, f.i., \cite{Shen-CMP-1996,B.Poggi-ADV-2024,Shen-DM-1998,Shen-IUMJ-1996} and the references therein. 
In Section \ref{S5} we presents sufficient conditions on $A$ to ensure the validity of the assumption that the first eigenvalue is simple.
In particular, we shall show that this property holds if $A = \mathcal{A}_{\hat{\a}}$ in Example \ref{ex2} with $|{\hat{\a}}|\le \frac{1}{2}$, see \cite{V.Felli-A.Ferrero-S.Terracini-JEMS-2011} for related research. 
Note that $\mathcal{A}_{\hat{\a}}$ is nonlinear and therefore does not satisfy the additional assumption of Part 2 of Theorem \ref{th1.3}. 
However, for the minimal solutions related to this family of magnetic potentials, we can obtain the same properties with the same discussion in Theorem \ref{th1.3}. Denote $V_{\hat{\a}} = V(x) + |\mathcal{A}_{\hat\a}(x)|^2$, then:

\begin{corollary}\label{abth1.5}
Let  $|{\hat{\a}}|\le 1/2$, and {$V_{\hat{\a}}$ satisfies \eq{V1}-\eq{V2} with $A(x) \equiv 0$}; moreover, assume that $V_{\hat{\a}}$ is cylindrically symmetric and nondecreasing w.r.t. $(x_1,x_2)$. 
If $0<c<c_{0,V_{\hat{\a}}}(r)$ then the infimum in \eq{abeq1.2} is achieved by a function $u_{c,0,V_{\hat\a}}$ that solves \eq{eq1.1}, is cylindrically symmetric w.r.t. $(x_1,x_2)$, and positive, up to a constant phase. Moreover, $u_{c,0,V_{\hat\a}}$ is unique up to a constant phase and free of vortices if $0<c<\hat{c}_0$ for some positive constant $\hat{c}_0$.
\end{corollary}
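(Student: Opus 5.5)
The plan is to reduce Corollary \ref{abth1.5} to Theorems \ref{th1.1} and \ref{th1.3}. We cannot apply Theorem \ref{th1.3} with $A=\mathcal{A}_{\hat\a}$, because that potential is not linear. Instead we treat the problem through a decomposition into angular Fourier modes. We write $u$ in cylindrical coordinates $(\varrho,\theta,x_3)$ with $\varrho=\sqrt{x_1^2+x_2^2}$ and expand
\[
u(x)=\sum_{k\in\Z}u_k(\varrho,x_3)e^{ik\theta}.
\]
Since $\mathcal{A}_{\hat\a}$ is tangential and $\diverg\mathcal{A}_{\hat\a}=0$, the quadratic form splits as
\[
\int_{\R^3}|(i\nabla+\mathcal{A}_{\hat\a})u|^2=\sum_k\int\Big(|\partial_\varrho u_k|^2+|\partial_{3}u_k|^2+\frac{(k-\hat\a\varrho^2/|x|^2)^2}{\varrho^2}|u_k|^2\Big).
\]
This is the decomposition referred to in Lemma \ref{able5.1}.

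The key comparison is pointwise in $k$, using $|\hat\a|\le1/2$. I would try to show that the $k=0$ contribution is minimal, or, more precisely, that for the relevant class of functions
\[
\int|(i\nabla+\mathcal{A}_{\hat\a})u|^2\ \ge\ \int|\nabla|u||^2+|\mathcal{A}_{\hat\a}|^2|u|^2 .
\]
With this inequality (a diamagnetic-type bound), the functional with $A=\mathcal{A}_{\hat\a}$ and potential $V$ is bounded below by the nonmagnetic functional with potential $V_{\hat\a}=V+|\mathcal{A}_{\hat\a}|^2$ evaluated at $|u|$. Equality holds for real nonnegative functions that are independent of $\theta$. Indeed, for $\theta$-independent real $u$ the cross term $2i\mathcal{A}\cdot\nabla u$ is purely imaginary and integrates to zero against $\bar u$, so the two energies coincide.

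Existence then follows from Theorem \ref{th1.1} applied with $A\equiv0$ and potential $V_{\hat\a}$, which satisfies \eqref{V1}--\eqref{V2} by assumption. This gives a local minimizer $u_{c,0,V_{\hat\a}}$ in $S_c\cap B(r)$ for $c<c_{0,V_{\hat\a}}(r)$. Replacing it by $|u|$ keeps it a minimizer. Replacing it further by its cylindrical rearrangement in $(x_1,x_2)$, via Pólya–Szegő together with the monotonicity and cylindrical symmetry of $V_{\hat\a}$, does not increase the energy and does not move it out of $B(r)$. So we obtain a nonnegative cylindrically symmetric minimizer. Because this minimizer is $\theta$-independent and real, it is also a minimizer of the magnetic problem. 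Positivity, up to a constant phase, follows from the strong maximum principle applied to the real Euler–Lagrange equation $-\Delta w+(V_{\hat\a}-\la)w=w^3$. Freedom from vortices follows because the current $\mathrm{Im}(\bar u\nabla u)$ vanishes for a real function.

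For uniqueness when $c<\hat c_0$, I would follow the proof of Theorem \ref{th1.3}, Part 1, in the nonmagnetic setting with potential $V_{\hat\a}$. The first eigenvalue of $-\Delta+V_{\hat\a}$ is automatically simple, by positivity of the ground state. The implicit function theorem argument around $\sqrt c\,\hat\psi_1$, combined with the energy ground state characterization of Theorem \ref{th1.1}, Part 2, then gives uniqueness up to a phase.

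The main obstacle is the diamagnetic-type inequality and its equality case: we need that any minimizer must have $|u|$ as a minimizer and must be $\theta$-independent up to a phase. With $|\hat\a|\le1/2$ we have $(k-\hat\a\sigma)^2\ge(\hat\a\sigma)^2$ for every integer $k$, where $\sigma=\varrho^2/|x|^2\in[0,1]$. This follows from $k^2\ge 2|k||\hat\a|\sigma$ whenever $|k|\ge1$ and $|\hat\a|\sigma\le1/2$. The inequality is strict for $k\neq0$ away from the degenerate set, and I would use that strictness to exclude nonzero modes in the equality case.
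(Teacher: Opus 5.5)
Your plan follows the paper's plan: angular modes as in Lemma \ref{able5.1}, existence from Theorem \ref{th1.1} with $A\equiv 0$ and potential $V_{\hat\a}$, symmetrization, the maximum principle, and the implicit function theorem. However, the step you flag as the main obstacle does not just need work, it fails. The inequality $\int_{\R^3}|(i\nabla+\mathcal{A}_{\hat\a})u|^2\ge\int_{\R^3}|\nabla|u||^2+|\mathcal{A}_{\hat\a}|^2|u|^2$ is false for $\frac14<|\hat\a|\le\frac12$. The mode bound $(k-\hat\a\sigma)^2\ge(\hat\a\sigma)^2$ only controls $\int|\nabla_{\varrho,x_3}u|^2+|\mathcal{A}_{\hat\a}|^2|u|^2$; this is \eqref{P29}. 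It does not control the angular derivative of $|u|$, which comes from interference between different modes. Concretely, let $\hat\a>\frac14$, $a:=\hat\a\varrho^2/|x|^2$, $0<\epsilon<1$, and $u=f(\varrho,x_3)(1+\epsilon e^{i\theta})$ with $f\ge 0$ supported near a point of $\{x_3=0,\ \varrho>0\}$, so that $a>\frac14$ on its support. The $\partial_\varrho$ and $\partial_{x_3}$ terms agree on both sides. Since $\int_0^{2\pi}\frac{\epsilon^2\sin^2\theta}{1+2\epsilon\cos\theta+\epsilon^2}\,d\theta=\pi\epsilon^2$, one gets
\[
\int_{\R^3}|(i\nabla+\mathcal{A}_{\hat\a})u|^2-\int_{\R^3}\big(|\nabla|u||^2+|\mathcal{A}_{\hat\a}|^2|u|^2\big)=\frac{\epsilon^2}{2}\int_{\R^3}\frac{1-4a}{\varrho^2}\,f^2\,dx<0
\]
(take $e^{-i\theta}$ if $\hat\a<-\frac14$). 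The same $u$ satisfies $\||u|\|_{\ddot\Sigma_{\mathcal{A}_{\hat\a},V}}>\|u\|_{\dot\Sigma_{\mathcal{A}_{\hat\a},V}}$, so even the constraint $B(r)$ does not pass from $u$ to $|u|$.

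Hence the claim that the $\theta$-independent real minimizer of the nonmagnetic problem also minimizes the magnetic one is unproved. The strictness of the mode inequality for $k\neq0$ cannot exclude nonzero modes, because the quartic term couples them. Take that minimizer $f$; it is a critical point of $\mathcal{E}|_{S_c}$ with the same multiplier, since $\mathcal{A}_{\hat\a}\cdot\nabla f=0$. Its constrained second variation in the direction $f\phi\,e^{i\theta}$, with $\phi=\phi(\varrho,x_3)$ vanishing near the $x_3$-axis, equals $\int_{\R^3}\big(f^2|\nabla\phi|^2+\frac{1-2a}{\varrho^2}f^2\phi^2-f^4\phi^2\big)$. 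Its sign is a genuinely nonlinear question; for $\hat\a=\frac12$ the weight $1-2a=x_3^2/|x|^2$ vanishes on $\{x_3=0\}$. The paper's Lemma \ref{able5.1} likewise establishes only \eqref{P29} before asserting \eqref{908}, so you cannot borrow this step from it.

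For uniqueness, the paper does not go through your equality case. It applies Theorem \ref{th1.3}, part 1, directly to the magnetic problem. It needs only the linear statement of Proposition \ref{P2}: $\hat\la_{1,\mathcal{A}_{\hat\a},V}$ is simple with eigenfunction $e^{i\theta_0}\psi_{1,0,V_{\hat\a}}$. For this linear problem your mode-by-mode strictness argument is exactly right.

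From magnetic uniqueness you can recover the qualitative conclusions for small $c$ without any diamagnetic inequality:
\begin{itemize}
\item $\mathcal{E}$ is invariant under rotations $R_\omega$ about the $x_3$-axis, so $u\circ R_\omega=e^{i\beta(\omega)}u$, where $\omega\mapsto e^{i\beta(\omega)}$ is a continuous character of $SO(2)$. Hence $u=u_m(\varrho,x_3)e^{im\theta}$ for a single $m\in\Z$.
\item The convergence $u/\sqrt c\to e^{i\theta_0}\psi_{1,0,V_{\hat\a}}$ from the proof of Theorem \ref{th1.3} forces $m=0$.
\item Then $\mathcal{A}_{\hat\a}\cdot\nabla u=0$, $\mathcal{E}(u)=J(u)$ and $\|u\|_{\dot\Sigma_{\mathcal{A}_{\hat\a},V}}=\|u\|_{\ddot\Sigma_{\mathcal{A}_{\hat\a},V}}$. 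So $|u|$ is again a minimizer, $u=e^{i\theta_0}|u|$ by uniqueness, and your Harnack argument gives positivity.
\end{itemize}
For the full range $0<c<c_{0,V_{\hat\a}}(r)$, however, neither your argument nor the paper's justifies transferring the nonmagnetic minimizer to the magnetic problem.
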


Actually, since $\mathcal{A}_{{\hat{\a}}}(x)\cdot \nabla u=0$ if $u$ is cylindrically symmetric w.r.t. $(x_1,x_2)$, we only need to consider the angular part and the cylindrical part separately in the term  $|(i\nabla + A_{\hat{\a}}(x))u|^2$  after rewriting $u$ in cylindrical coordinates (see \eq{eqA.9bis}). 
 
 Based on this observation and an argument similar to the one developed in Eq. (86) in \cite[Appendix C]{Triay-SJMA-2018}, we can convert \eq{eq1.1} into a problem without magnetic coupling effects:
\begin{equation}\label{abeq1.5}
\left\{
  \begin{array}{ll}
    -\Delta u + (V(x) + |\mathcal{A}_{\hat{\a}}|^2-\la)u = |u|^2u, &\ \text{in}\ \R^3  \\
    \ds\int_{\R^3}|u|^2=c. &
  \end{array}
\right.
\end{equation}
%(see also the remark after Theorem 7.4 in \cite{Lieb-Loss-AMS-2005}).

%\medskip

%\deleted[id=R.M.,comment={We have erased that Remark}]{More %symmetric properties of $u_{c,\mathcal{A}_{\hat{\a}},V}$ can be %expected if $V_{\hat{\a}}$ has a better structure, see Remark %\ref{abre4.2} for details.}

%\medskip

 For problem \eq{abeq1.5}, the special case $V(x)+|\mathcal{A}_{\hat{\a}}(x)|^2 = |x^{\bot}|^2=x^2_1+x^2_2$  has been discussed extensively, starting from \cite{Bellazzni-et.al.-CMP-2017} (see also \cite{MR4831725}, and references therein).
In particular, in \cite{Bellazzni-et.al.-CMP-2017}, the local minimizer for $m^r_c$ and the energy ground state for \eq{eq1.1} were obtained using the concentration-compactness argument, although $x^2_1+x^2_2$ in $\R^3$ does not meet the growth condition in \eq{V1}.
We point out that the method developed in this paper enables us to give an accurate range of $c$ in which a local minimizer exists, both in the case $V(x)+|\mathcal{A}_{\hat{\a}}(x)|^2 = |x|^2$ and in the case $V(x)+|\mathcal{A}_{\hat{\a}}(x)|^2 = |x^{\bot}|^2$ considered in \cite{Bellazzni-et.al.-CMP-2017}; see Remark \ref{reB.2} in Appendix \ref{AB} for details.
Problems of the type described above are also considered in dimension 2. 
We refer, for example, to \cite{Guo-Zeng-Zhou-Poincare-2016}, which addresses the problem \eq{abeq1.5} in $\R^2$ when $V(x)+|\mathcal{A}_{\hat{\a}}(x)|^2 = (|x|-A)^2$, $A>0$.

\bigskip

In \cite[Theorem 1.1]{X.Luo-T.Yang-JDE-2020}, some asymptotic behavior of the solution $(u_{c,A,V},\la_{c,A,V})$ as $c\to 0^+$ was exhibited in the cases $V(x) = |x|^2  - \rho^2|x^{\bot}|^2$ and $A(x) = \rho x^{\bot}$ (a special case of Example \ref{ex1}). 
These properties, according to our uniqueness result in Theorem \ref{th1.3} and the simplicity discussion in  \S\ref{S5}, can be improved as follows.

\begin{theorem}\label{th1.6}
Let $A_\rho(x)= \rho x^{\bot}$, $V(x) = |x|^2  - \rho^2|x^{\bot}|^2$,   and $(u_{c,A_{\rho},V}, {\la}_{c,A_{\rho},V})$ as in Theorem \ref{th1.1}. 
Then, there exist $\rho_0>0,c(\rho_0)>0$ such that 
$$
3- \la_{c,A,V} = O(c),\ {\Big\|u_{c,A,V} - l_{c,A,V}e^{-\frac{|x|^2}{2}}\Big\|^2_{{\Sigma}_{A,V}}} = o(c)
$$
if $|\rho|<\rho_0$ and $0<c<c(\rho_0)$, where
\[l_{c,A,V} = \frac{1}{\pi^{3/2}}\int_{\R^3}(u_{c,A,V})\,e^{-\frac{|x|^2}{2}}.\]%, $3$ is the first eigenvalue of simplicity of $-\Delta + |x|^2$ and $\frac{1}{{\pi^{3/4}}}e^{-\frac{|x|^2}{2}}$ is the corresponding first eigenfunction(\cite{Antonelli-Carles-Silva-CMP-2015}).
\end{theorem}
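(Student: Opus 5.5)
The approach is a small-mass expansion around the first eigenpair of $\hat L_{A,V}=(i\nabla+A_\rho)^2+V$, combined with the uniqueness statement of Theorem \ref{th1.3}. Since $A_\rho=\rho x^\bot$ and $V=|x|^2-\rho^2|x^\bot|^2$, expanding \eqref{1205} (note $\diverg A_\rho=0$) gives
\[
\hat L_{A,V}=-\Delta+|x|^2+2i\rho\, x^\bot\cdot\nabla=-\Delta+|x|^2-2\rho L_3,
\]
where $L_3=-i(x_1\partial_2-x_2\partial_1)$ is the angular momentum operator. This operator commutes with the harmonic oscillator $-\Delta+|x|^2$.

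The first step is the spectral picture. On each Hermite eigenspace of level $3+2n$, $L_3$ has integer eigenvalues $m$ with $|m|\le n$. Hence the spectrum of $\hat L_{A,V}$ is $\{3+2n-2\rho m\}$. For $|\rho|<\rho_0\le 1/2$, say, the bottom is $3$, attained only at $n=m=0$. It is simple, with normalized eigenfunction $\psi_1=\pi^{-3/4}e^{-|x|^2/2}$, and there is a spectral gap $\ge 2-2|\rho|$. This verifies the simplicity hypothesis of Theorem \ref{th1.3}. Example \ref{ex1} with $k=1$ gives \eqref{V1}-\eqref{V2} for small $|\rho|$. So for $0<c<c(\rho_0)$ the energy ground state $u_c:=u_{c,A,V}$ is unique up to phase, and it is the local minimizer from Theorem \ref{th1.1}.

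The second step is the energy estimate. Test $m^r_c$ with $\sqrt c\,\psi_1$, which lies in $B(r)$ for small $c$, to get
\[
\mathcal E(u_c)\le \tfrac{3c}{2}-\tfrac{c^2}{4}\|\psi_1\|_4^4 .
\]
Since $u_c\in B(r)$, Gagliardo–Nirenberg and the bound $\|\nabla|u|\|_2^2\le\|(i\nabla+A)u\|_2^2$ (diamagnetic inequality) give $\|u_c\|_4^4\le C\|u_c\|_2\,\|u_c\|^{3}_{\dot\Sigma_{A,V}}$. Using the preliminary bound $\|u_c\|_{\dot\Sigma}^2\le C c$ (which follows since $\mathcal E(u_c)\le 3c/2$ and the quartic term is controlled on $B(r)$), we get $\|u_c\|_4^4=O(c^2)$. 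Therefore $\langle \hat L u_c,u_c\rangle=3c+O(c^2)$.

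Next, decompose $u_c=l\psi_1+w$ with $w\perp\psi_1$ in $L^2$. The spectral gap gives
\[
\langle\hat L u_c,u_c\rangle-3c\ge(2-2|\rho|)\|w\|_2^2 ,
\]
so $\|w\|_2^2=O(c^2)$. Moreover $\langle\hat Lw,w\rangle-3\|w\|_2^2=O(c^2)$, so $\|w\|^2_{\Sigma_{A,V}}=O(c^2)=o(c)$. Rewriting $l\psi_1=l_{c,A,V}e^{-|x|^2/2}$ with the stated normalization of $l_{c,A,V}$ yields the second claim.

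For the multiplier, test the equation against $u_c$:
\[
\lambda c=\langle\hat L u_c,u_c\rangle-\|u_c\|_4^4=3c+O(c^2),
\]
so $3-\lambda=O(c)$.

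The main obstacle is bookkeeping the smallness thresholds consistently, namely $\rho_0$ and $c(\rho_0)$. The constants in Theorem \ref{th1.1}, Theorem \ref{th1.3} and the Gagliardo–Nirenberg bound must be uniform for $|\rho|<\rho_0$, which requires checking that the explicit constants of Appendix B depend continuously on $\rho$. The a priori bound $\|u_c\|_{\dot\Sigma}^2=O(c)$ also has to be justified within the ball $B(r)$. The spectral-gap step itself is elementary.
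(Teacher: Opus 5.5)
Your proof is correct, but it takes a different route from the paper's.

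\textbf{The paper's route.} It argues qualitatively. It gets simplicity of $\hat\lambda_1$ for small $\rho$ from the implicit function theorem (Proposition \ref{leA.3}). It then invokes Theorem \ref{th1.3} to make $u_c$ unique, positive and radial up to a phase. On radial functions the magnetic term drops out, so $v_c=u_c/\sqrt c$ solves $-\Delta v+|x|^2v-c|v|^2v=\lambda_c v$. A compactness argument gives $v_c\to\psi_1$. Testing against $\psi_1$ then yields $(3-\lambda_c)\tilde l=c\int|v_c|^2v_c\psi_1$ and $\|v_c-\tilde l\psi_1\|^2_{\dot\Sigma_{A,V}}=\lambda_c+c\|v_c\|_4^4-3|\tilde l|^2\to0$.

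\textbf{Your route.} You diagonalize $\hat L_{A,V}=-\Delta+|x|^2-2\rho L_3$ exactly, which gives the gap $2-2|\rho|$. You combine it with the energy comparison $3c\le\|u_c\|^2_{\dot\Sigma_{A,V}}\le 3c+O(c^2)$. That comparison is correctly justified by the test function $\sqrt c\,\psi_1$ and the a priori bound $\|u_c\|^2_{\dot\Sigma_{A,V}}=O(c)$. This bound follows as in Proposition \ref{pr3.1}, because $c<c_{A,V}(r)$ keeps the coefficient $\frac12-\frac{\sqrt{cr}}4C_{3,4}^4$ bounded below.

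\textbf{What your route buys.}
\begin{itemize}
\item You never use uniqueness, positivity, symmetry or convergence of $v_c$. The appeal to Theorem \ref{th1.3} in your first step is superfluous, and the argument applies verbatim to every element of $\mathcal M^r_c$.
\item You get the rate $O(c^2)$ rather than $o(c)$.
\item The $\rho$-uniformity you flag as the main obstacle is automatic. Since $\hat\lambda_{1,A_\rho,V}=3$ for all $|\rho|<1$, the threshold $c_{A,V}(r)$ (hence $\sigma_r$ and your a priori constants) does not depend on $\rho$. The only $\rho$-dependence is the gap, so any $\rho_0<1$ works, which improves the paper's ``small $\rho_0$''.
\end{itemize}

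\textbf{What the paper's route buys.} Its method does not rely on an explicitly computable spectrum. It also produces structural information on $u_c$ (uniqueness up to phase, positivity, radial symmetry), which the statement itself does not require.
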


%\begin{corollary}
%%1. If $V(x) + |\mathcal{A}_\a(x)|^2 = |x^\bot|^2$, then \eq{eq1.1} has a couple of solution $(u_{c,0,|x^\bot|^2},\la_{c,0,|x^\bot|^2})$ to \eq{eq1.1} with $u_{c,0,|x^\bot|^2}\in \mathcal{M}^r_c$ if $0<c<c_{0,|x^\bot|^2}(r)$. There exists a $k\in \R$ such that $u_{c,0,|x^\bot|^2}(x_1,x_2,x_3-k)$ is positive, radially symmetric, radially decreasing w.r.t. $(x_1,x_2)$ and $x_3$. Moreover, $u_{c,0,|x^\bot|}$ is an energy ground state to \eq{eq1.1} if $0<c<\min\{r/6,c_{0,|x^\bot|^2}(r)\}$.
%
%2. If $V(x) + |\mathcal{A}_\a(x)|^2 = |x|^2$, then \eq{eq1.1} has a couple of solution $(u_{c,0,|x|^2},\la_{c,0,|x|^2})$ to \eq{eq1.1} with $u_{c,0,|x|^2}\in \mathcal{M}^r_c$ if $0<c<c_{0,|x|^2}(r)$. $u_{c,0,|x|^2}$ is positive, radially symmetric and decreasing. Moreover, $u_{c,0,|x|^2}$ is an energy ground state to \eq{eq1.1} if $0<\min\{r/9,c_{0,|x|^2}(r)\}$ and there exists a constant $\hat{c}_0>0$ such that $u_{c,0,|x|^2}(r)$ is unique up to a constant phase if $0<c<\hat{c}_0$.
%\end{corollary}
% Then the existence parts follow by the same argument of Theorem \ref{th1.1} and the work in \cite{Bellazzni-et.al.-CMP-2017}, see subsection \ref{ss4.2} for details. But, as stated in Remark \ref{re1.3}, we will give an explicit value for  $c_{0,|x^\bot|^2}(r)$ and $c_{0,|x|^2}(r)$ respectively and then an explicit range of the existence of energy ground states, which is an interesting improvement to the results in \cite{Bellazzni-et.al.-CMP-2017}, see Remark \ref{reB.2} in Appendix \ref{AB} for details.

One of the main contributions of this paper is addressing the uniqueness of energy ground states of \eq{eq1.1} under general conditions on $V$ and $A$. 
Based on the non-degeneracy and uniqueness of positive solutions of
\begin{equation}\label{uueq1.14}
-\Delta u + u = |u|^{p-2}u\ \ \text{in}\ \R^N
\end{equation}
(\cite{Kwong-1989} and \cite[\S 4.2]{AMRN}),
the authors in \cite{Bonheure-Nys-Schaftingen-JMPA-2019} obtained the uniqueness of ground states for
\begin{equation}\label{uueq1.13}
(i\nabla + A(x))^2 u + u = |u|^{p-2}u\ \text{in}\ \R^N
\end{equation}
when $2<p<2^*$, where $A$ is skew-symmetric and $|\nabla \times A|$ is small.
However, \eq{uueq1.13} is a $1$-frequency problem, which is different from \eq{eq1.1}. Actually, it seems not reasonable to have that a ground state  of \eq{uueq1.13} is an energy ground state of \eq{eq1.1} under general assumption on $V$. 
The equivalent relation between fixed frequency ground states and energy ground states  of \eq{uueq1.14} has been stated in \cite[Theorem 1.2]{Dovetta-Serra-Tilli-MA-2023}, that is the (negative) energy ground state level is the Legendre-Fenchel transform of the action ground state level.
See also \cite[Remark 2.18]{MMRarXiv} for the analogous issue in the defocusing Gross-Pitaevskii-Poisson equation.

\medskip

Recently, Guo et al. in \cite{Guo-Luo-Peng-arxiv-2023} used a finite-dimensional reduction method to obtain, for small $c$, a concentrating family of solutions to problem \eq{eq1.1}, when the electric potential is anharmonic and the magnetic field is constant. 
By constructing an auxiliary problem involving the Lagrange multipliers, that exhibit a blow-up phenomenon, they rescale in a suitable way and obtain the $1$-frequency problem \eq{uueq1.14} as a limiting problem, as $c\to 0^+$.
Then using the nondegeneracy of the positive solution of \eq{uueq1.14}, the uniqueness of the concentrating solution is obtained, when $c$ is small enough. 
We emphasize that Theorem \ref{th1.6} shows that the Lagrange multiplier blow-up phenomenon will not occur in our framework, even for the physically relevant case in Example \ref{ex1}.
Moreover, for the (AB)-type magnetic field and the singular electric potential presented in Example \ref{ex2}, it seems difficult to get a limiting problem with a good structure, as in \eq{uueq1.14}.
The approach described in \cite{Guo-Luo-Peng-arxiv-2023} is therefore not applicable in this context.

\medskip

% The main idea of the proof for the uniqueness part in Theorem \ref{th1.3} is the implicit function theorem.
% The construction of the auxiliary function is based on the decomposition of $\Sigma_{A,V}$, using the first eigenspace corresponding to the first eigenvalue of the linear part of our operator. 
% However, since we are in the complex value case, this approach is not straightforward. 
% With respect to the real case considered in \cite{Guo-Zeng-Zhou-Poincare-2016}, for example, in this context a non-trivial construction with respect to rotations is needed; see \eqref{9999} for details. 
% The construction here is based on a perturbation of mass, which is different from that of \cite{Triay-SJMA-2018} on the magnetic field. Indeed, the functional considering in \cite{Triay-SJMA-2018} is nonnegative and coercive by its admissible assumption on parameters. However, we are in a sign-indefinite case, this causes a lack of uniqueness for the related limiting problem without magnetic fields. Furthermore, symmetry of the energy ground state follows from the uniqueness result by carefully analysing the forms of $A$ and $V$ under suitable cylindrical coordinates.

The main idea of the proof for the uniqueness part in Theorem \ref{th1.3} is the implicit function theorem, recently used in \cite{Guo-Zeng-Zhou-Poincare-2016,Triay-SJMA-2018}. 
The construction of the auxiliary function is based on the decomposition of $\Sigma_{A,V} $ into suitable eigenspaces. 
However, since we are in the complex value case, the first eigenspace corresponding to the first eigenvalue is more complicated than that in \cite{Guo-Zeng-Zhou-Poincare-2016},   and a skillful construction with respect to rotations is needed, see \eqref{9999} for details. 
The construction here is based on a perturbation of mass, which is different from that of \cite{Triay-SJMA-2018} in the magnetic field. 
In fact, the functional considered in \cite{Triay-SJMA-2018} is nonnegative and coercive, by the admissible assumption on the parameters therein. 
However, we are in a sign-indefinite case, and this causes a lack of uniqueness for the related limiting problem without magnetic fields. 
Furthermore, symmetry of the energy ground state follows from the uniqueness result by a careful analysis of the forms of $A$ and $V$, in suitable cylindrical coordinates.

\medskip

% \vspace{0.3cm}

% \textbf{Plan of the paper.} In Section \ref{s2}, we give some preliminaries. In section \ref{s3}, we give the proof of Theorem \ref{th1.1}.
% In Section \ref{s4}, we prove Theorems \ref{th1.3} and \ref{abth1.5}. In Appendix A we will give some conditions on $A$ so that the assumptions in Theorem \ref{th1.3} are reasonable and then prove Theorem \ref{th1.6}. The spectrum analysis for $(i\nabla + A(x))^2 + V(x)$ is also settled in Appendix A. Appendix B presents explicit expression of the threshold on mass.

%%%%%%%%%%%%%%%%%%%%%%%%%%%%%%%%%%%%%%%%%%%%%%%%%%%%%
%%%%%%%%%%%%%%%%%%%%%%%%%%%%%%%%%%%%%%%%%%%%%%%%%%%%%
  
\section{Notations and preliminary results}
%\label{s2}

%%%%%%%%%%%%%%%%%%%%%%%%%%%%%%%%%%%%%%%%%%%%%%%%%%%%%
%%%%%%%%%%%%%%%%%%%%%%%%%%%%%%%%%%%%%%%%%%%%%%%%%%%%%

\begin{notations}\label{SNot}
$\phantom{pp}$

{\small
\begin{itemize}

\item[-)] $\C$ is the set of complex numbers. 
For each $v \in \C$, $\Re v$ refers to the real component of $v$, and $\bar{v}$ is its conjugate.

\smallskip

\item[-)] $\sgn v=\frac{\bar v}{|v|}$ for $v\in \C\setminus\{0\}$, $\sgn 0=0$; in particular,  $\sgn x=\frac{x}{|x|}$ for $x\in \R\setminus\{0\}$.

\smallskip

\item[-)] $H^1(\R^3,\C)$ and $L^p(\R^3,\C)$,  with $1\le p\le \infty$, are the standard Sobolev and Lebesgue spaces, with the norm $\|u\|$ and $\|u\|_p$, respectively.
We briefly write $H^1$ and $L^p$.

\smallskip

\item[-)] $H^1_{A} :=\left\{u\in L^2(\R^3,\C)\ :\ \|u\|_{A}<\infty\right\}$, where
$$
\|u\|^2_{A}:= \|(i\nabla + A(x))u\|^2_2 + \|u\|^2_2
$$
(see, f.i. \cite[\S 7.20]{Lieb-Loss-AMS} or \cite[\S 9]{Cazenave-AMS-2003}).
$H^1_{A} $ is a Hilbert space with inner product 
$$
\langle u,v\rangle_{A} = \Re\int_{\R^3}\big((i\nabla + A(x))u\cdot \overline{(i\nabla + A(x))v }+ u\bar{v}\big).
$$

\smallskip

\item[-)] $ \Sigma_{A,V} :=\Big\{u\in H^1_{A} \ :\ \int_{\R^3}V(x)|u|^2 <\infty \Big\},$ 
with inner product and norm
$$
\langle u,v\rangle_{\Sigma_{A,V}}:=\Re\int_{\R^3}\big((i\nabla + A(x))u\cdot\overline{(i\nabla + A(x))v} + V(x)u\bar{v} + u\bar{v}\big)
$$
$$\|u\|^2_{\Sigma_{A,V}}:=\|u\|^2_{\dot{\Sigma}_{A,V}} + \|u\|^2_2,
\ \mbox{
where 
}\ 
\|u\|^2_{\dot{\Sigma}_{A,V}}:=\|(i\nabla + A(x))u\|^2_2 + \int_{\R^3} V(x)|u|^2.
$$

%The study of uniqueness will need the limiting case $A=0$, where the following Hilbert space will be involved:

% \item[-)]  $ {{\Sigma}}_{0,V}:=\Big\{u\in H^1(\R^3,\R)\ :\ \int_{\R^3}|\nabla u|^2 + (V(x)+1)|u|^2<\infty \Big\}$,
% with inner product and norm
% \[
% \langle u,v\rangle_{{{\Sigma}}_{0,V}} = \int_{\R^3}\nabla u\nabla v + (V(x)+1)uv,
% \qquad  \|u\|_{{{\Sigma}}_{0,V}} = \sqrt{\langle u,u\rangle_{\Sigma_{0,V}}}.
% \]
% %Sometimes, the solution $u$ of \eq{eq1.1} may be a real function times a constant phase, in which case the $u$ is indeed an element of the following Hilbert space $\hat{\Sigma}_{A,V}(\R^N)$ defined by

\item[-)] 
$ \hat{\Sigma}_{A,V}:=\Big\{u\in H^1(\R^3,\C)\ :\ \|u\|^2_{\ddot{\Sigma}_{A,V}}+\|u\|^2_2<\infty \Big\}$,
with inner product and norm 
\[
\langle u,v\rangle_{\hat{\Sigma}_{A,V}} = \int_{\R^3}\big(\nabla u\overline{\nabla v} + (V(x)  + |A(x)|^2+1)u\bar{v} \big),
\qquad  \|u\|_{\hat{\Sigma}_{A,V}} = \sqrt{\langle u,u\rangle_{\hat{\Sigma}_{A,V}}}.
\]
Here $\|u\|^2_{\ddot{\Sigma}_{A,V}} = \int_{\R^3}|\nabla u|^2 + (V(x) + |A(x)|^2)|u|^2$.
\medskip

\item[-)] $\Sigma_{A,V}^\Re$ and $\hat{\Sigma}_{A,V}^\Re$ are the subspaces of real valued functions in $\Sigma_{A,V}$ and $\hat{\Sigma}_{A,V}$, respectively.

\medskip

\item[-)] The symbols ``$\rightarrow$" and ``$\rightharpoonup$" are used to signify strong and weak convergence, respectively, within the related functional spaces.

\medskip

\item[-)] $o_c(1)$ and $O_c(1)$ mean $|o_c(1)|\to 0$ and $|O_c(1)|\le C<+\infty$, as $c\to 0^+$, respectively. 

\end{itemize}

}%end of small

\end{notations}

\medskip

Next, we collect some preliminary results. 
Firstly, we give the Gagliardo-Nirenberg inequality (see \cite{Weinstein-CMP-1983}).

\begin{lemma}\label{le2.1}
Let $N\ge 2$ and $p\in(2,2^*)$ with $2^* = \frac{2N}{N-2}$ if $N\ge 3$ and $2^* =\infty$ if $N=2$. 
Then
\begin{equation*}
\|u\|_p\le C_{N,p}\|\nabla u\|^{\delta_p}_2\|u\|^{(1-\delta_p)}_2,\ \forall u\in H^1(\R^N,\R),
\end{equation*}
where $\mathcal{C}_{N,p} = \Big(\frac{p}{2\|W_p\|^{p-2}_2}\Big)^{\frac{1}{p}},\ W_p$ is the ground state  of $-\Delta W + (\frac{1}{\delta_p}-1)W=\frac{2}{p\delta_p}|W|^{p-2}W$ and $\delta_p = \frac{N(p-2)}{2p}$.
\end{lemma}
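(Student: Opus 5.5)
The statement is the Gagliardo–Nirenberg inequality with sharp constant, taken from Weinstein. I would follow Weinstein's variational argument: minimize the functional
\[
J(u)=\frac{\|\nabla u\|_2^{p\delta_p}\,\|u\|_2^{p(1-\delta_p)}}{\|u\|_p^p},\qquad u\in H^1(\R^N,\R)\setminus\{0\}.
\]
The inequality is then equivalent to $\inf J>0$, and the optimal constant is $C_{N,p}^p=(\inf J)^{-1}$.

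\textbf{Step 1: invariances and symmetrization.} $J$ is invariant under translations, multiplication by constants, and dilations $u\mapsto u(\mu\,\cdot)$. The exponents $\delta_p=\frac{N(p-2)}{2p}$ are chosen exactly so that the powers of $\mu$ cancel. By the Pólya–Szegő inequality, replacing $u$ by its Schwarz symmetrization $|u|^*$ does not increase $J$, since the $L^2$ and $L^p$ norms are preserved and $\|\nabla u\|_2$ does not increase. So I may take a minimizing sequence of nonnegative radial nonincreasing functions, normalized by $\|u_n\|_2=\|\nabla u_n\|_2=1$.

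\textbf{Step 2: existence of a minimizer.} This is the main obstacle, because of the lack of compactness on $\R^N$. It is handled by Strauss' compact embedding $H^1_{rad}\hookrightarrow L^p$ for $2<p<2^*$ when $N\ge2$. A subsequence converges weakly in $H^1$ and strongly in $L^p$ to some $u^*$. The inequality $\|u_n\|_p^p\le C$ together with $J(u_n)\to\inf J$ shows $\inf J>0$; otherwise $\|u_n\|_p\to\infty$, contradicting boundedness in $H^1$. It also shows that $\|u^*\|_p^p=1/\inf J\ne0$. By weak lower semicontinuity of the norms, $J(u^*)\le\inf J$, so $u^*$ is a minimizer, with $\|u^*\|_2,\|\nabla u^*\|_2\le1$; minimality forces both to equal $1$.

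\textbf{Step 3: Euler–Lagrange equation and identification of the constant.} At the minimizer $u^*$ with $\|\nabla u^*\|_2=\|u^*\|_2=1$, I differentiate $\log J$ to get
\[
-\,p\delta_p\Delta u^* + p(1-\delta_p)u^* = p\,\frac{|u^*|^{p-2}u^*}{\|u^*\|_p^p}.
\]
Set $\|u^*\|_p^p=1/\inf J$ and rescale $W=a\,u^*(b\,\cdot)$, choosing $a,b$ so that $W$ solves $-\Delta W+(\frac1{\delta_p}-1)W=\frac{2}{p\delta_p}|W|^{p-2}W$. Since $J$ is invariant, $W$ is also a minimizer. Testing the equation against $W$ and using the Pohozaev identity gives $\|\nabla W\|_2^2=\|W\|_2^2$ and $\|W\|_p^p=\frac p2\|W\|_2^2$. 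Substituting into $J(W)$ yields
\[
\inf J=J(W)=\frac{\|W\|_2^{p}}{\|W\|_p^p}=\frac{2}{p}\,\|W\|_2^{p-2},
\]
which is exactly $C_{N,p}^{-p}$. Extending the inequality to complex-valued $u$ is not needed, since the lemma is stated for real $u$; in any case it follows from $|\nabla|u||\le|\nabla u|$.

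The remaining bookkeeping — the exponents in the Pohozaev identity and the choice of $a,b$ — is routine. The only delicate point is the compactness in Step 2, and this is exactly what radial symmetrization combined with Strauss' lemma provides.
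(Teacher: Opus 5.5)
Your proposal is correct and takes the same route as the paper. The paper gives no proof of its own and simply cites Weinstein, whose argument is exactly this minimization of $J$: symmetrization, Strauss' compactness, the Euler--Lagrange equation and rescaling.

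The one step to make explicit is why $\frac{2}{p}\|W\|_2^{p-2}$ equals $C_{N,p}^{-p}$, that is, why your rescaled minimizer has the same $L^2$ norm as the ground state $W_p$. There are two ways to close this:
\begin{itemize}
\item Invoke Kwong's uniqueness of positive solutions, since your $W$ is nonnegative, radial and nontrivial.
\item Use your Nehari--Pohozaev computation for every nontrivial solution $W'$, not only for $W$. It gives $\|\nabla W'\|_2^2=\|W'\|_2^2=\frac{2}{p}\|W'\|_p^p$. Hence $J(W')=\frac{2}{p}\|W'\|_2^{p-2}\ge\inf J$, and the action of $W'$ equals $\|W'\|_2^2/N$. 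So your $W$ is a least-action solution, i.e.\ a ground state.
\end{itemize}
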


In order to study the magnetic term, we need the following diamagnetic inequality, whose proof can be found in \cite[Theorem 7.21]{Lieb-Loss-AMS}.
\begin{lemma}\label{lle2.2}
Let $N\ge 2$, $A\in L^2_{\loc}(\R^N,\R^N)$ and $u\in \Sigma_{A,V}$, then $|u|\in H^1(\R^N,\R)$ and
$$
|\nabla|u|(x)|\le |(i\nabla + A(x))u(x)|,\ \text{a.e.}\ x\in \R^N.
$$

\end{lemma}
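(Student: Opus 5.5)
The statement to prove is Lemma \ref{lle2.2}, the diamagnetic inequality. The plan is to compute $\nabla|u|$ pointwise through a regularized modulus and compare it with the covariant derivative.

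\emph{Step 1: reduction to local regularity of $u$.} For $u\in\Sigma_{A,V}$ we have $u\in L^2$ and $(i\nabla+A)u\in L^2$. Since $A\in L^2_{\loc}$ and $u\in L^2$, Cauchy--Schwarz gives $Au\in L^1_{\loc}$. Hence $\nabla u=-i\big((i\nabla+A)u-Au\big)\in L^1_{\loc}$, so $u\in W^{1,1}_{\loc}$. This is enough to apply the chain rule with Lipschitz functions of $u$.

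\emph{Step 2: regularized modulus.} For $\va>0$ set $u_\va=\sqrt{|u|^2+\va^2}-\va$. By the chain rule in $W^{1,1}_{\loc}$,
\[
\nabla u_\va=\frac{\Re(\bar u\nabla u)}{\sqrt{|u|^2+\va^2}} .
\]
Since $A$ is real, $\Re(\bar u\, iAu)=\Re(i A|u|^2)=0$. Therefore $\Re(\bar u\nabla u)=\Re\big(\bar u(\nabla-iA)u\big)$, and $(\nabla-iA)u=-i(i\nabla+A)u$ has the same modulus as $(i\nabla+A)u$. It follows that
\[
|\nabla u_\va|\le \frac{|u|}{\sqrt{|u|^2+\va^2}}\,|(i\nabla+A)u|\le|(i\nabla+A)u| \quad\text{a.e.}
\]

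\emph{Step 3: passage to the limit.} As $\va\to0$, $u_\va\to|u|$ pointwise and in $L^2$. The gradients converge a.e. to $\Re(\sgn(u)\nabla u)\,\chi_{\{u\ne0\}}$ and are dominated by $|(i\nabla+A)u|\in L^2$. By dominated convergence they converge in $L^2$, so the distributional gradient of $|u|$ is this limit, with $\nabla|u|=0$ a.e. on $\{u=0\}$. We conclude $|u|\in H^1(\R^N,\R)$ and the pointwise bound passes to the limit, giving $|\nabla|u||\le|(i\nabla+A)u|$ a.e.

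\emph{Main obstacle.} The only delicate point is justifying the chain rule and identifying the distributional limit. That justification rests on $u\in W^{1,1}_{\loc}$ from Step 1 together with the uniform $L^2$ domination in Step 3.
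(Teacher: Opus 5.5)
Your proof is correct and follows essentially the same route as the standard proof of the diamagnetic inequality in Lieb--Loss (Theorem 7.21), which the paper cites instead of giving its own argument. The steps match: $u\in W^{1,1}_{\loc}$ because $A\in L^2_{\loc}$, the identity $\Re(\bar u\nabla u)=\Re(\bar u(\nabla-iA)u)$ for real $A$, and the regularized modulus $\sqrt{|u|^2+\va^2}-\va$ (the device behind their formula for $\nabla|u|$), with the limit passage by dominated convergence against the $L^2$ majorant $|(i\nabla+A)u|$.
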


\begin{remark}\label{re2.4}
Combing this lemma with Lemma \ref{le2.1}, we immediately have
\begin{equation*}
\|u\|_p\le C_{N,p}\|(i\nabla + A(x))u\|^{\delta_p}_2\|u\|^{1-\delta_p}_2,\ \forall u\in H^1(\R^N,\C),
\end{equation*}
where $C_{N,p}$ and $\delta_p$ are the same as in Lemma \ref{le2.1}.
\end{remark}

According to the Cauchy inequality and the condition \eq{V1}, one can check easily the following operator inequality.
\begin{proposition}\label{pr2.5}
Let $V$ and $A$ satisfy the assumption \eq{V1}. Then   there exist $C_\alpha,C>0$ such that
\begin{eqnarray}
\label{924}& C_{\alpha}\|u\|^2_{\ddot{\Sigma}_{A,V}} \le \|u\|^2_{\dot{\Sigma}_{A,V}}\le 2\|u\|^2_{\ddot{\Sigma}_{A,V}},&  \hspace{1cm} \forall u\in \Sigma_{A,V},
\\
 &  \|u\|^2_2\le C\|u\|^2_{\dot{\Sigma}_{A,V}}& \hspace{1cm} \forall u\in \Sigma_{A,V},\nonumber
\end{eqnarray}
here $C_{\alpha} = \max\limits_{\va\in(0,1),\si\in(0,1)}\min\{1-\va,1-\si,1+\si\alpha-1/\va\} = 1-1/\sqrt{\a+1}$.
 In particular, $\|\cdot\|_{{\Sigma}_{A,V}}$, $\|\cdot\|_{\dot{\Sigma}_{A,V}}$, $\|\cdot\|_{\ddot{\Sigma}_{A,V}}$ are equivalent norms in $\Sigma_{A,V}$.
In addition, the embedding $\Pi: {\Sigma}_{A,V}\hookrightarrow L^2(\R^3)$ is compact.
\end{proposition}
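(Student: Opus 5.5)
The plan is to reduce everything to a pointwise expansion of $|(i\nabla+A)u|^2$, combined with \eq{V1} and standard Rellich-type arguments.

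\textbf{Step 1: $\Sigma_{A,V}\subset H^1$ and the pointwise identity.} For $u\in\Sigma_{A,V}$, \eq{V1} gives $\int|A|^2|u|^2\le \bar\alpha^{-1}\int V|u|^2<\infty$, so $Au\in L^2$. Since $A\in L^2_{\loc}$ and $u\in L^2$, the distribution $\nabla u$ is well defined, and
$$\nabla u=-i\big((i\nabla+A)u-Au\big)\in L^2 .$$
Hence $u\in H^1$, and a.e.
$$|(i\nabla+A)u|^2=|\nabla u|^2+|A|^2|u|^2+2\Re\big(i\nabla u\cdot A\bar u\big).$$
The converse inclusion (finite $\ddot\Sigma$-norm implies $u\in\Sigma_{A,V}$) follows from the same identity together with the upper bound in Step 2.

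\textbf{Step 2: the two inequalities in \eqref{924}.} For the upper bound, use $|a+b|^2\le 2|a|^2+2|b|^2$:
$$\|u\|^2_{\dot\Sigma_{A,V}}\le 2\int|\nabla u|^2+2\int|A|^2|u|^2+\int V|u|^2\le 2\|u\|^2_{\ddot\Sigma_{A,V}}.$$
For the lower bound, apply Cauchy's inequality to the cross term, $|2\Re(i\nabla u\cdot A\bar u)|\le \va|\nabla u|^2+\va^{-1}|A|^2|u|^2$ with $\va\in(0,1)$. Then split $V=\si V+(1-\si)V$ and use $\si V\ge\si\bar\alpha|A|^2$. This gives
$$\|u\|^2_{\dot\Sigma_{A,V}}\ge (1-\va)\int|\nabla u|^2+(1+\si\bar\alpha-\va^{-1})\int|A|^2|u|^2+(1-\si)\int V|u|^2 .$$
Comparing with $\|u\|^2_{\ddot\Sigma_{A,V}}$, and noting that the $|A|^2$-coefficient must be compared with $1$, the constant is $\min\{1-\va,1-\si,1+\si\bar\alpha-1/\va\}$. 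Maximizing over $\va,\si$ gives $C_\alpha$; this last optimization is elementary and I would not dwell on it.

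\textbf{Step 3: $\|u\|_2^2\le C\|u\|^2_{\dot\Sigma_{A,V}}$.} I argue by contradiction. Suppose $\|u_n\|_2=1$ and $\|u_n\|_{\dot\Sigma_{A,V}}\to0$. Since $V\to\infty$, choose $R$ with $V\ge1$ on $\{|x|>R\}$; then $\int_{|x|>R}|u_n|^2\to0$. By Lemma \ref{lle2.2}, $|u_n|$ is bounded in $H^1$ with $\nabla|u_n|\to0$ in $L^2$. By Rellich, up to a subsequence $|u_n|\to w$ in $L^2(B_R)$, and therefore in $L^2(\R^3)$. The limit satisfies $w\in H^1$, $\nabla w=0$ and $\|w\|_2=1$, which is impossible. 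Norm equivalence of all three norms then follows from Steps 2 and 3.

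\textbf{Step 4: compactness of $\Pi$.} Let $(u_n)$ be bounded in $\Sigma_{A,V}$. By Steps 1 and 2 it is bounded in $H^1$, so Rellich on each ball together with a diagonal argument gives a subsequence converging in $L^2_{\loc}$. The tails are uniformly small:
$$\int_{|x|>R}|u_n|^2\le \Big(\inf_{|x|>R}V\Big)^{-1}\|u_n\|^2_{\dot\Sigma_{A,V}}\to0 \quad\text{as } R\to\infty,$$
uniformly in $n$. Hence the subsequence is Cauchy in $L^2(\R^3)$.

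The only delicate point is Step 1: justifying the expansion near the singularity at $0$, where $A$ is merely $L^2_{\loc}$. It is handled by the observation $Au\in L^2$, which in turn comes from $V\ge\bar\alpha|A|^2$.
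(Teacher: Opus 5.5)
Your proof is correct. Steps 2 and 4 coincide with the paper's argument: the same Cauchy splitting with $\va=\si=1/\sqrt{\bar\alpha+1}$, and Rellich on balls plus the uniform tail bound coming from $V\to\infty$.

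Your Step 1 makes explicit a point the paper uses tacitly. The bound $V\ge\bar\alpha|A|^2$ gives $Au\in L^2$, hence $\Sigma_{A,V}\subset H^1$. This is what justifies expanding $|(i\nabla+A)u|^2$ pointwise, and applying Rellich to $u_n$ itself rather than only to $|u_n|$.

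The genuine difference is in the $L^2$ bound. The paper proves it directly: it estimates $\int_{B_R}|u|^2\le|B_R|^{2/3}\|u\|_6^2\le c_1\|\nabla|u|\|_2^2$ by H\"older, the Sobolev inequality and the diamagnetic inequality, and $\int_{|x|>R}|u|^2\le\int V|u|^2$. This gives an explicit $C$ in terms of $R$ and the Sobolev constant.

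Your contradiction argument uses Rellich together with the fact that an $L^2(\R^3)$ function with vanishing gradient is zero. It yields no explicit constant. However, it avoids the three-dimensional embedding $D^{1,2}\hookrightarrow L^6$, so it carries over verbatim to $N=2$, the setting of Remark~\ref{R1.7}. There the paper's intermediate inequality $\int_{B_R}|u|^2\le c_1\|\nabla|u|\|_2^2$ is false, although the final bound $\|u\|_2^2\le C\|u\|^2_{\dot\Sigma_{A,V}}$ still holds.
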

\begin{proof}
The right-hand side in \eq{924} is obvious. For the left-hand side, by the Cauchy inequality and the assumption \eq{V1}, for $\va,\si\in(0,1)$, we have
\begin{align*}
\|u\|^2_{\dot{\Sigma}_{A,V}}\ge \int_{\R^3}(1-\va)|\nabla u|^2 + (1-\si)V(x)|u|^2 + \big(1+\si\alpha-\tfrac{1}{\va}\big)|A(x)|^2|u|^2.
%\ge C_{\alpha}\|u\|^2_{\hat{\Sigma}_{A,V}}.
\end{align*}
Then \eq{924} follows by letting $\va = \si = 1/\sqrt{\a+1}$.

Now, let $R>0$ be such that $V\ge 1$ on $\R^3\setminus \{|x|\le R\}$.
Then, by using the Sobolev inequality, and diamagnetic inequality, 
\begin{equation*}
\begin{split}
\|u\|_2^2 &=\int_{\{|x|\le R\} }|u|^2+\int_{\R^3\setminus \{|x|\le R\}}|u|^2\le c_1 \|\nabla |u|\|_2^2+\int_{\R^3}V(x) |u|^2\\
& \le c_1\|(i\nabla + A(x))u\|^2_2 +\int_{\R^3}V(x) |u|^2
\le C \|u\|^2_{\dot{\Sigma}_{A,V}}.
\end{split}
\end{equation*}
Finally, taking into account that $H^1( K)$ is compactly embedded in $L^2(K)$, for every compact set $K\subset \R^3$, that $V(x)\to\infty$, as $|x|\to\infty$, and arguing as before, we get that $\Pi: {\Sigma}_{A,V}\hookrightarrow L^2(\R^3)$ is compact. 
\end{proof}

Let us consider on $\Sigma_{A,V}$ %$(\Sigma_{A,V},\|\cdot\|_{\dot{\Sigma}_{A,V}})$ 
the magnetic Schr\"odinger operator
%\todogr[inline]{Here I have inserted both the space and the norm, because in proof the $\dot{\Sigma}_{A,V}$-norm appear. In the following I'll see if on $\Sigma_{A,V}$ the $\dot{\Sigma}_{A,V}$-norm will be sufficient in all the paper. In such case we can introduce only this norm. I have to work more on this point.}
\beq
\label{1441}
\hat{L}_{A,V}:=(i\nabla + A(x))^2 + V(x).
\eeq

We have the following spectral analysis for $\hat{L}_{A,V}$:

\begin{lemma}\label{Ale2.8}
Let $A$ and $V$ satisfy \eq{V1}. Then there hold:

(i) the eigenvalues of $\hat{L}_{A,V}$ are real;

(ii) if we repeat each eigenvalue $\hat{\la}_{k,A,V}$, $k\in\N^+$,  according to its (finite) multiplicity, we have
$$
0< \hat{\la}_{1,A,V}\le \hat{\la}_{2,A,V}\le\cdots
$$
and
$$
\hat{\la}_{k,A,V}\to\infty\ \ \text{as}\ k\to\infty;
$$

(iii) there exists an orthonormal basis $\{\hat{\psi}_{k,A,V}\}$ of $L^2(\R^3,\C)$, where $\hat{\psi}_{k,A,V}\in {\Sigma}_{A,V}$ is an eigenfunction corresponding to $\hat{\la}_{k,A,V}$:
$$
\hat{L}_{A,V}\hat{\psi}_{k,A,V} = \hat{\la}_{k,A,V}\hat{\psi}_{k,A,V},\quad \forall k\in \N^+.
$$
\end{lemma}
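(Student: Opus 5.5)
The plan is to obtain the spectral decomposition from the abstract spectral theorem for self-adjoint operators with compact resolvent, applied to the closed quadratic form
$$
q(u):=\|u\|^2_{\dot{\Sigma}_{A,V}}=\int_{\R^3}|(i\nabla + A(x))u|^2 + V(x)|u|^2,\qquad u\in\Sigma_{A,V},
$$
in the Hilbert space $L^2(\R^3,\C)$ regarded as a complex Hilbert space with the Hermitian product $\int u\bar v$.

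First, I will check that $q$ is a densely defined, closed, nonnegative Hermitian form. Density holds because $C_c^\infty(\R^3\setminus\{0\})\subset\Sigma_{A,V}$: here $A\in L^2_{\loc}$ and $V\in L^1_{\loc}$, and $V\ge0$ by \eq{V1}. Closedness follows because $(\Sigma_{A,V},\|\cdot\|_{\Sigma_{A,V}})$ is a Hilbert space, which is exactly the completeness of the form domain in the norm $q(u)+\|u\|_2^2$. By Proposition \ref{pr2.5}, $q(u)\ge C^{-1}\|u\|_2^2$. The Friedrichs/Kato representation theorem then gives a unique self-adjoint operator, which is $\hat L_{A,V}$ in the weak sense, with $\langle \hat L_{A,V}u,v\rangle = q(u,v)$. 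Since $\hat L_{A,V}$ is self-adjoint, its eigenvalues are real, which proves (i). Concretely, if $\hat L_{A,V}\psi=\mu\psi$, then $\mu\|\psi\|_2^2=q(\psi)\in\R$.

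Next, I will consider the inverse $K:=\hat L_{A,V}^{-1}$, which exists and is bounded on $L^2$ since $\hat L_{A,V}\ge C^{-1}>0$. By Lax--Milgram on $\Sigma_{A,V}$ with the equivalent inner product $q$, the inverse maps $L^2$ boundedly into $\Sigma_{A,V}$. Composing with the compact embedding $\Pi:\Sigma_{A,V}\hookrightarrow L^2$ from Proposition \ref{pr2.5} shows that $K$ is a compact, self-adjoint, positive, injective operator on $L^2$.

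The Hilbert--Schmidt spectral theorem then yields an orthonormal basis $\{\hat\psi_k\}$ of $L^2$ (this uses injectivity of $K$, so that there is no kernel component) with $K\hat\psi_k=\mu_k\hat\psi_k$, where $\mu_k>0$, each eigenvalue has finite multiplicity, and $\mu_k\to0$. Setting $\hat\la_{k,A,V}=1/\mu_k$ gives (ii) and (iii), with $\hat\psi_k=\hat\la_k K\hat\psi_k\in\Sigma_{A,V}$. Positivity follows from $\hat\la_{1,A,V}=\min q(u)/\|u\|_2^2\ge C^{-1}>0$, and this minimum is attained by the compactness of $\Pi$.

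The main obstacle is the careful identification of the form operator with the differential expression $(i\nabla+A)^2+V$ when $A$ and $V$ are singular at $0$. I would avoid this issue by defining the eigenvalue problem weakly: $\hat\psi$ is an eigenfunction with eigenvalue $\la$ if $q(\hat\psi,v)=\la\Re\int\hat\psi\bar v$ for all $v\in\Sigma_{A,V}$, which is the sense used for the solutions of \eq{eq1.1}. Only the compactness and the coercivity from Proposition \ref{pr2.5} are then needed, and both are already available.
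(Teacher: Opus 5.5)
Your proposal is correct and follows essentially the same route as the paper: build the solution operator $g\mapsto u_g$ via Riesz/Lax--Milgram on $\Sigma_{A,V}$ with the $\dot\Sigma_{A,V}$ inner product, use the compact embedding of Proposition \ref{pr2.5} to get a compact, symmetric, positive operator on $L^2$, diagonalize it, and invert the eigenvalues. The differences are that you cite the Hilbert--Schmidt spectral theorem (and the Kato representation theorem) where the paper reproves the spectral theorem by hand in Steps 2--3 of Appendix A, and that your explicit observation $C_c^\infty(\R^3)\subset\Sigma_{A,V}$ supplies the density in $L^2$ needed for $N(\hat S_{A,V})=\{0\}$, which the paper uses without comment.
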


 The presence of the imaginary unit $i$ causes some difference from the standard arguments in spectral analysis (see, f.i. Section 6.5 in \cite{L.C.Evans-1998}).
 For the sake of completeness, we provide a detailed proof of Lemma \ref{Ale2.8}, in Appendix A for continuity.

\smallskip
 
Now, consider on $\hat \Sigma_{A,V}^\Re$ the Schr\"odinger operator
\[
L_{A,V}:=-\Delta + \big(V(x)+|A(x)|^2).
\]
For $L_{A,V}$ we have an analogous spectral analysis:

\begin{lemma}\label{Ale2.7}
Let $A$ and $V$ satisfy \eq{V1}. Then there hold:

(i) the (real) eigenvalues of $L_{A,V}$ have finite multiplicity;

(ii) if we repeat each eigenvalue $\la_{k,A,V}$, $k\in\N^+$, according to its multiplicity, we have
$$
0<  \la_{1,A,V}\le \la_{2,A,V}\le\cdots
$$
and
$$
\la_{k,A,V}\to\infty\ \ \text{as}\ k\to\infty;
$$

(iii) there exists an orthonormal basis $\{\psi_{k,A,V}\}$ of $L^2(\R^3,\R)$, where $\psi_{k,A,V}\in \hat{\Sigma}^{\Re}_{A,V} $ is an eigenfunction corresponding to $\la_{k,A,V}$:
$$
L_{A,V}\psi_{k,A,V} = \la_{k,A,V}\psi_{k,A,V},\quad \forall k\in\N^+.
$$

(iv) the first eigenvalue $\la_{1,A,V}$ is simple.

\end{lemma}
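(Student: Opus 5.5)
The plan is to follow the classical variational route for Schr\"odinger operators with confining potential, now in the real space $\hat\Sigma^{\Re}_{A,V}$. This space is Hilbert with the form
\[
Q(u,v)=\int_{\R^3}\nabla u\nabla v+(V+|A|^2)uv .
\]
By \eqref{V1}, $W:=V+|A|^2\ge V$, and $W\to\infty$ as $|x|\to\infty$.

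\textbf{Step 1: compactness.} Arguing exactly as in Proposition~\ref{pr2.5}, with $A=0$ and $V$ replaced by $W$, we get $\|u\|_2^2\le C\,Q(u,u)$. We also get that the embedding $\hat\Sigma^{\Re}_{A,V}\hookrightarrow L^2(\R^3,\R)$ is compact: on balls use Rellich, and outside large balls use $\int_{|x|>R}u^2\le (\inf_{|x|>R}W)^{-1}Q(u,u)$.

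\textbf{Step 2: spectral decomposition, giving (i)--(iii).} By Lax--Milgram, for each $f\in L^2$ there is a unique $Kf\in\hat\Sigma^{\Re}_{A,V}$ with $Q(Kf,v)=\int fv$ for all $v$. By Step~1, $K$ is a compact, self-adjoint, positive operator on $L^2(\R^3,\R)$, and it is injective. The spectral theorem for compact self-adjoint operators then gives an orthonormal basis $\{\psi_k\}$ of eigenfunctions with eigenvalues $\mu_k\downarrow 0$, each $\mu_k>0$ of finite multiplicity. Setting $\la_{k,A,V}=1/\mu_k$ yields (i)--(iii), with $\psi_k\in\hat\Sigma^{\Re}_{A,V}$ and $L_{A,V}\psi_k=\la_k\psi_k$ weakly. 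Positivity comes from $\la_1=\inf Q(u,u)/\|u\|_2^2\ge 1/C>0$.

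\textbf{Step 3: simplicity of $\la_1$, giving (iv).} This is the main point. I would use the Rayleigh characterization $\la_1=\min\{Q(u,u):\|u\|_2=1\}$. If $u$ is a minimizer, then so is $|u|$, because $Q(|u|,|u|)=Q(u,u)$ (since $|\nabla|u||=|\nabla u|$ a.e. for real $H^1$ functions). Hence $|u|\ge0$ is an eigenfunction, i.e.\ a nonnegative weak solution of
\[
-\Delta|u|+(W-\la_1)|u|=0 .
\]
Here $W\in L^1_{\loc}$, and it is $C^1$ away from the origin. The strong maximum principle (Harnack inequality) on the connected open set $\R^3\setminus\{0\}$, where $W$ is locally bounded, shows that $|u|>0$ there, since $|u|\not\equiv0$.

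Now suppose the first eigenspace had dimension at least two. Then it would contain $u\ne0$ orthogonal in $L^2$ to a given eigenfunction $\psi_1$. Both $\psi_1$ and $u$ are continuous and have fixed sign on $\R^3\setminus\{0\}$: $u$ is continuous there by elliptic regularity, and since $|u|>0$ it cannot change sign on this connected set. But two functions of fixed sign, each nonvanishing a.e., cannot be $L^2$-orthogonal. This contradiction proves simplicity.

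The expected obstacle is precisely the regularity and positivity step near the singular point $0$, where $W$ is only $L^1_{\loc}$. Removing the single point $0$ keeps $\R^3\setminus\{0\}$ connected and does not affect a.e.\ statements, so working there avoids any Harnack inequality at the origin.
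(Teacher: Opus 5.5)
Your proposal is correct and follows essentially the route the paper indicates: (i)--(iii) come from the compact, self-adjoint inverse operator and the compact embedding, exactly as in the proof of Lemma \ref{Ale2.8} in Appendix A (you cite the spectral theorem instead of re-deriving it), and (iv) comes from the standard argument behind \cite[Theorem 11.8]{Lieb-Loss-AMS}: $|u|$ is again a minimizer, hence strictly positive, and two functions of fixed sign cannot be $L^2$-orthogonal. Running the Harnack/continuity step on $\R^3\setminus\{0\}$, where $V+|A|^2$ is locally bounded, is a sensible way to handle the possible singularity at the origin allowed by \eqref{V1}, a point the paper's bare citation does not address explicitly.
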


The standard proof is analogous to the proof of Lemma \ref{Ale2.8} and will be omitted. 
In particular, for the simplicity of the first eigenvalue, we refer the reader to \cite[Theorem 11.8]{Lieb-Loss-AMS}. 

 \smallskip
 
If $A=0$, it is easy to see that the simplicity of the first eigenvalue in the magnetic case is inherited by the corresponding real-valued case. 
Anyway, we state the following result for future reference and outline the proof for the sake of completeness.

\begin{lemma}
\label{LBar}
Let $V$ satisfy \eqref{V1}, then the magnetic operator $\hat L_{0,V}$ has the same eigenevalues of the
corresponding real Schr\"odinger operator $L_{0,V}$, and the same eigenvectors, up to a phase factor. 
In particular, $\hat\lambda_{1,0,V}=\lambda_{1,0,V}$ is simple.
\end{lemma}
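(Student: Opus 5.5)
When $A=0$, the operator is $\hat L_{0,V}=-\Delta+V$ acting on complex-valued functions. We have $L_{0,V}=-\Delta+V$ on real-valued functions, since $|A|^2=0$. The plan is to decompose complex functions into real and imaginary parts. We then use that $\hat L_{0,V}$ commutes with complex conjugation, because $V$ is real and there is no first-order term.

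\textbf{Step 1: real eigenvalues lift to complex ones.} If $\psi\in\hat\Sigma^\Re_{0,V}$ satisfies $L_{0,V}\psi=\la\psi$, then $\psi\in\Sigma_{0,V}$. This holds because $\hat\Sigma_{0,V}$ and $\Sigma_{0,V}$ coincide when $A=0$. Hence $\hat L_{0,V}\psi=\la\psi$, so every eigenvalue of $L_{0,V}$ is an eigenvalue of $\hat L_{0,V}$.

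\textbf{Step 2: complex eigenvectors split into real ones.} Let $u=v+iw\in\Sigma_{0,V}$ with $\hat L_{0,V}u=\la u$, where $\la$ is real by Lemma \ref{Ale2.8}(i). Test the weak formulation against real and purely imaginary test functions $\varphi$ and $i\varphi$. Since the form $\Re\int(\nabla u\cdot\overline{\nabla \varphi}+Vu\bar\varphi)$ separates into real and imaginary parts, both $v$ and $w$ are (if nonzero) weak eigenfunctions of $L_{0,V}$ with the same $\la$. Thus the spectra coincide. Moreover, the complex eigenspace $E_\la$ equals $F_\la\otimes\C=F_\la+iF_\la$, where $F_\la$ is the real eigenspace.

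\textbf{Step 3: simplicity.} Both spectra are bounded below and attained by Lemmas \ref{Ale2.8} and \ref{Ale2.7}, so $\hat\la_{1,0,V}=\la_{1,0,V}$. By Lemma \ref{Ale2.7}(iv), $F_{\la_1}=\spant_\R\{\psi_{1,0,V}\}$. Therefore the first complex eigenspace is $\{z\psi_{1,0,V}:z\in\C\}$, which is one-dimensional over $\C$. Any normalized eigenvector is then $e^{i\theta}\psi_{1,0,V}$, i.e. it agrees with $\psi_{1,0,V}$ up to a phase factor. Likewise, for higher eigenvalues the complex eigenvectors are complex combinations of real eigenvectors; for a simple eigenvalue this is exactly a phase times the real eigenvector.

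The only point needing care is Step 2. We must verify that the weak formulation really decouples, which relies on $V$ being real and the form having no cross terms when $A=0$. We also need that $v,w$ lie in the right real space, i.e. $\int V v^2<\infty$ and $v\in H^1$. Both follow from $|v|,|w|\le|u|$ and the splitting of $|\nabla u|^2=|\nabla v|^2+|\nabla w|^2$.
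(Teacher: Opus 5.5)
Your argument is correct. It rests on the same basic observation as the paper's proof: with $A=0$ and $V$ real, the real and imaginary parts decouple. You implement that observation differently, though.

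\textbf{The paper's route.} The paper expands $u_1=\Re u$ and $u_2=\Im u$ in the real orthonormal eigenbasis $\{\psi_{k,0,V}\}$ of $L_{0,V}$ given by Lemma \ref{Ale2.7}(iii). For a putative eigenvalue $a+bi$, it obtains for each $k$ a $2\times 2$ linear system in the coefficients $(t_k,s_k)$. A nontrivial pair forces $(a-\la_k)^2+b^2=0$. One computation therefore yields three things at once: reality of the eigenvalue, equality of the spectra, and (for $\la_1$) the vanishing of all coefficients with $k\ge 2$. It does not need Lemma \ref{Ale2.8}(i).

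\textbf{Your route.} You take reality from Lemma \ref{Ale2.8}(i) and split the weak equation itself by testing with $\varphi$ and $i\varphi$. This shows directly that $\Re u$ and $\Im u$ are eigenfunctions of $L_{0,V}$. It buys you three things:
\begin{itemize}
\item it avoids using completeness of the real eigenbasis;
\item it avoids the step, implicit in the paper, of letting $\hat L_{0,V}$ act coefficientwise on an infinite series;
\item it gives the sharper statement $E_\la=F_\la+iF_\la$ for every eigenvalue, which makes precise what ``same eigenvectors, up to a phase factor'' means when $\la$ is not simple.
\end{itemize}

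\textbf{Making your argument self-contained.} If you want it independent of Lemma \ref{Ale2.8}(i), the same splitting handles a complex $\la=a+bi$ directly. It gives
\[
L_{0,V}v=av-bw,\qquad L_{0,V}w=bv+aw .
\]
Pair the first equation with $w$ and the second with $v$. Symmetry of the real bilinear form $\int(\nabla v\cdot\nabla w+Vvw)$ then gives
\[
b\big(\|v\|_2^2+\|w\|_2^2\big)=0,
\]
so $b=0$.
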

\begin{proof}[Sketch of the proof.]
First, it is trivial to observe that the eigenvalue of $\{\lambda_k\}:=\{\lambda_{k,0,V}\}$ of  $L:=L_{0,V}$ are also eigenvalue for $\hat L:=\hat L_{0,V}$.

Now, let us assume that $\bar\lambda=a+bi$, $a,b\in\R$ is an eigenvalue of $\hat L$, and $u=u_1+iu_2\neq 0$, $u_1,u_2\in L^2(\R^3,\R)$, a corresponding eigenvector.
We decompose $u_1,u_2$ in the orthonormal basis $\{e_k\}:=\{\psi_{k,0,V}\}$, namely: $u_1=\sum_kt_k e_k$, $u_2=\sum_ks_k e_k$.
If $\hat L(u_1+i u_2)=(a+b i)(u_1+iu_2)$ and we write it in the components, taking into account that $\hat L e_k=L e_k=\lambda_k e_k$, $k\in \N^+$, we obtain 
$$
\left\{\begin{array}{l}(a-\lambda_k)t_k-b s_k=0\\
b t_k+(a-\lambda_k)s_k=0\end{array}\right.\qquad \forall k\in \N^+.
$$
If $t_{\bar k}, s_{\bar k}\neq 0$ for some $\bar{k}\in N^+$, then $(a-\lambda_{\bar k})^2+b^2=0$, that is $a=\lambda_{\bar k}$, $b=0$. 
So, the eigenvalues are the same.
Finally, if $u=u_1+i u_2$ satisfies $\hat L u=\lambda_1 u$, then $u_1=\bar t e_1$ and $u_2=\bar s e_1$, for suitable $\bar t,\bar s\in \R$, so $u=(\bar t+\bar si)e_1$.
\end{proof}

In general, we cannot obtain the simplicity of the first eigenvalue in the lemma \ref{Ale2.8}. 
However, this property can be obtained, by the implicit function theorem, when $A$ is small in some sense (see Proposition \ref{leA.1}). 
Notice that the simplicity of the first eigenvalue is crucial in Theorem \ref{th1.3}.

\medskip

It is not difficult to obtain from Lemma \ref{Ale2.8} a sufficient and necessary condition, to ensure ${S_c}\cap B(r)\ne\emptyset$.

\begin{lemma}\label{le2.7}
Denote $\hat{\psi}_{c,\tau}=\sqrt{c}\tau^{\frac{3}{2}}{\hat{\psi}}_{1,A,V}(\tau\, \cdot )$, $c,\tau>0$. 
There hold
\begin{equation}\label{AAeq2.4}
\|\hat{\psi}_{c,\tau}\|^2_2=c,\ \ \|\hat{\psi}_{c,1}\|^2_{\dot{\Sigma}_{A,V}} = c\hat{\la}_{1,A,V}\ \text{and}\ \|\hat{\psi}_{c,\tau}\|^2_{\dot{\Sigma}_{A,V}}\to\infty\ \text{as}\ \tau\to \infty;
\end{equation}
moreover, ${S_c}\cap B(r)\ne\emptyset$ if and only if $c\le \frac{r}{\hat{\la}_{1,A,V}}$.
\end{lemma}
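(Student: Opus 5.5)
The proof splits into the three claims of \eqref{AAeq2.4} and then the characterization of ${S_c}\cap B(r)\ne\emptyset$. For the $L^2$ identity we change variables $y=\tau x$:
\[
\|\hat\psi_{c,\tau}\|_2^2=c\tau^3\int|\hat\psi_{1,A,V}(\tau x)|^2dx=c\|\hat\psi_{1,A,V}\|_2^2=c,
\]
since the eigenfunctions in Lemma \ref{Ale2.8}(iii) are $L^2$-normalized. For $\tau=1$ we test the eigenvalue equation $\hat L_{A,V}\hat\psi_{1,A,V}=\hat\la_{1,A,V}\hat\psi_{1,A,V}$ against $\hat\psi_{1,A,V}$ in the weak form (real part of the sesquilinear form). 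This gives $\|\hat\psi_{1,A,V}\|^2_{\dot\Sigma_{A,V}}=\hat\la_{1,A,V}$, and multiplying by $c$ yields the second identity.

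For the divergence as $\tau\to\infty$ we use Proposition \ref{pr2.5}, namely $\|u\|^2_{\dot\Sigma_{A,V}}\ge C_\alpha\|u\|^2_{\ddot\Sigma_{A,V}}\ge C_\alpha\|\nabla u\|_2^2$, where the last step uses $V+|A|^2\ge0$, which follows from \eqref{V1}. Scaling gives $\|\nabla\hat\psi_{c,\tau}\|_2^2=c\tau^2\|\nabla\hat\psi_{1,A,V}\|_2^2$. This tends to $\infty$ provided $\nabla\hat\psi_{1,A,V}\not\equiv0$, which holds because a nonzero $L^2(\R^3)$ function cannot be constant. We also need $\hat\psi_{c,\tau}\in\Sigma_{A,V}$ for every $\tau$. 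This is the one point needing care, since $A$ and $V$ are not scale-invariant. If finiteness for all $\tau$ is not available, it suffices to use $\tau\ge1$ together with density of $C_c^\infty$ in $\Sigma_{A,V}$: replace $\hat\psi_{1,A,V}$ by a compactly supported approximation $\phi$ with the same $L^2$ norm, for which $\phi(\tau\cdot)$ is supported in a fixed compact set and $A\in L^2_{\loc}$, $V\in L^1_{\loc}$ apply. For the characterization only the $\tau=1$ function is actually needed.

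For the equivalence, first take $u\in{S_c}$. By the variational characterization of the first eigenvalue (Rayleigh quotient, from the expansion in the orthonormal basis of Lemma \ref{Ale2.8}(iii)),
\[
\|u\|^2_{\dot\Sigma_{A,V}}=\sum_k\hat\la_{k,A,V}|\langle u,\hat\psi_{k,A,V}\rangle|^2\ge\hat\la_{1,A,V}\|u\|_2^2=c\hat\la_{1,A,V}.
\]
Hence if $u\in{S_c}\cap B(r)$ exists, then $c\hat\la_{1,A,V}\le r$. Conversely, if $c\le r/\hat\la_{1,A,V}$, then $\hat\psi_{c,1}\in{S_c}$ and $\|\hat\psi_{c,1}\|^2_{\dot\Sigma_{A,V}}=c\hat\la_{1,A,V}\le r$, so ${S_c}\cap B(r)\ne\emptyset$.

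The main obstacle is justifying this Rayleigh-quotient inequality in the form-domain sense, i.e. the identity $\|u\|^2_{\dot\Sigma}=\sum\hat\la_k|u_k|^2$ for $u\in\Sigma_{A,V}$. I would take it from the construction in the proof of Lemma \ref{Ale2.8}, where $\hat\la_{1,A,V}$ arises as the minimum of $\|u\|^2_{\dot\Sigma_{A,V}}$ on the unit $L^2$ sphere, using the compact embedding of Proposition \ref{pr2.5}.
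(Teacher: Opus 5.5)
Your proposal is correct and follows the paper's proof. It uses the scaling computations, the bound \eqref{924} for the divergence as $\tau\to\infty$, the test function $\hat{\psi}_{c,1}$ for one implication, and the Rayleigh bound $\|u\|^2_{\dot\Sigma_{A,V}}\ge\hat{\la}_{1,A,V}\|u\|_2^2$ for the other. The only correction is that the appendix proof of Lemma \ref{Ale2.8} obtains $\hat{\la}_{1,A,V}$ through the compact inverse $\hat S_{A,V}$, not as a minimum. The Rayleigh bound is therefore best justified by your expansion itself, which holds because $\{\hat{\psi}_{k,A,V}\}$ is also complete in $\Sigma_{A,V}$. Your extra care about $\hat{\psi}_{c,\tau}\in\Sigma_{A,V}$ goes beyond what the paper writes.
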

\begin{proof}
The first relations in \eqref{AAeq2.4} came from a direct computation, while the third is easily seen also taking into account \eq{924}.

Next, if $c\le \frac{r}{\hat{\la}_{1,A,V}}$, then
\[
\|\hat{\psi}_{c,1}\|^2_{\dot{\Sigma}_{A,V}} = c\|\hat{\psi}_{1,A,V}\|^2_{\dot{\Sigma}_{A,V}}\le r,
\text{ and } \hat{\psi}_{c,1}\in S_c\cap B(r).
\]
On the other hand, if $S_c\cap B(r)\ne\emptyset$, then, letting $u\in S_c\cap B(r)$, by the variational characterization of the first eigenvalue we have
$$
r\ge \|u\|^2_{\dot{\Sigma}_{A,V}} = \|u\|^2_2 \cdot  \Big\|\frac{u}{\|u\|_2}\Big\|^2_{\dot{\Sigma}_{A,V}}\ge c\, \hat{\la}_{1,A,V},
$$
which implies $c\le \frac{r}{\hat{\la}_{1,A,V}}$.
\end{proof}

%%%%%%%%%%%%%%%%%%%%%%%%%%%%%%%%%%%%%%%%%%%%%%%%%%%%%%%%%%%%%%%%%%%

\section{Local minimum and ground state solution, proof of Theorem \ref{th1.1}}
\label{S3}

%%%%%%%%%%%%%%%%%%%%%%%%%%%%%%%%%%%%%%%%%%%%%%%%%%%%%%%%%%%%%%%%%%%

In this section, we give the proof of Theorem \ref{th1.1}. 
To begin with, we show that $\mathcal{E}|_{{S_c}}$ presents a local minimum structure, which guarantees that the minimizers of $m^r_{{c}}$ are indeed critical points of $\mathcal{E}|_{{S_c}}$.

\medskip

\begin{proposition}\label{pr3.1}
Let $A$ and $V$ satisfy the condition \eq{V1}. 
For any $r>0$, if $\mu,\sigma>0$ satisfy $0<\sigma<\mu< 1$,  $c$ is such that
\beq\label{1725}
0<{c} <\Big(\frac{2}{\sqrt{r}}\frac{\mu-\si}{\mu}\frac{1}{C^4_{3,4}}\Big)^2,
\eeq
 ${S_c}\cap\big(B(r)\backslash B(\mu r)\big) \ne\emptyset$ and ${S_c}\cap  B(\sigma r)  \ne\emptyset$,  then
\begin{equation*}
\inf_{u\in {S_c}\cap B(\sigma r)}\mathcal{E}(u)<\inf_{u\in {S_c}\cap (B(r)\backslash B(\mu r))}\mathcal{E}(u),
\end{equation*}
where $C_{3,4}$ is the Gagliardo-Nirenberg constant given in Lemma \ref{le2.1}.
\end{proposition}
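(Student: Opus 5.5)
We compare an upper bound for $\mathcal{E}$ on $S_c\cap B(\sigma r)$ with a lower bound on $S_c\cap(B(r)\setminus B(\mu r))$. Both come from the decomposition
$$
\mathcal{E}(u)=\tfrac12\|u\|^2_{\dot{\Sigma}_{A,V}}-\tfrac14\|u\|_4^4 .
$$

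\emph{Upper bound.} For $u\in S_c\cap B(\sigma r)$, discarding the nonpositive quartic term gives $\mathcal{E}(u)\le \frac12\|u\|^2_{\dot{\Sigma}_{A,V}}\le \frac{\sigma r}{2}$. Hence
$$
\inf_{S_c\cap B(\sigma r)}\mathcal{E}\le \frac{\sigma r}{2}.
$$

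\emph{Lower bound.} For $u\in S_c\cap(B(r)\setminus B(\mu r))$ we use Remark \ref{re2.4} with $N=3$ and $p=4$, so that $\delta_4=3/4$:
$$
\|u\|_4^4\le C_{3,4}^4\|(i\nabla+A)u\|_2^3\|u\|_2\le C_{3,4}^4\, c^{1/2}\|u\|^3_{\dot{\Sigma}_{A,V}}.
$$
The last step uses $\|(i\nabla+A)u\|_2^2\le\|u\|^2_{\dot{\Sigma}_{A,V}}$, which holds because $V\ge\bar\alpha|A|^2\ge0$ by \eqref{V1}. Writing $t=\|u\|^2_{\dot{\Sigma}_{A,V}}\in(\mu r,r]$, this gives
$$
\mathcal{E}(u)\ge \frac t2-\frac{C_{3,4}^4c^{1/2}}{4}t^{3/2}=\frac t2\Big(1-\frac{C_{3,4}^4c^{1/2}}{2}t^{1/2}\Big)\ge \frac t2\Big(1-\frac{C_{3,4}^4c^{1/2}}{2}\sqrt r\Big).
$$

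\emph{Using the mass condition.} Condition \eqref{1725} is equivalent to $\frac{C_{3,4}^4\sqrt c\sqrt r}{2}<\frac{\mu-\sigma}{\mu}$. Hence the bracket is strictly larger than $1-\frac{\mu-\sigma}{\mu}=\frac{\sigma}{\mu}$, and therefore
$$
\mathcal{E}(u)>\frac{t}{2}\,\frac{\sigma}{\mu}\ge \frac{\mu r}{2}\,\frac{\sigma}{\mu}=\frac{\sigma r}{2}.
$$

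\emph{Strictness of the infimum.} A pointwise strict inequality does not by itself give a strict inequality between infima, so we keep a uniform margin. Set $\kappa:=\frac{C_{3,4}^4\sqrt{cr}}{2}<\frac{\mu-\sigma}{\mu}$. Then for every admissible $u$,
$$
\mathcal{E}(u)\ge\frac{\mu r}{2}(1-\kappa),\qquad\text{with}\qquad \frac{\mu r}{2}(1-\kappa)>\frac{\sigma r}{2}.
$$
This bound does not depend on $u$, so
$$
\inf_{S_c\cap(B(r)\setminus B(\mu r))}\mathcal{E}\ge \frac{\mu r}{2}(1-\kappa)>\frac{\sigma r}{2}\ge \inf_{S_c\cap B(\sigma r)}\mathcal{E}.
$$

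The nonemptiness hypotheses only guarantee that both infima are finite and meaningful. The main point to watch is this strictness step; the rest is a direct application of the Gagliardo--Nirenberg bound in Remark \ref{re2.4}.
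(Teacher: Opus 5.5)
Your proof is correct and follows the paper's argument: you use the same upper bound $\mathcal{E}\le \sigma r/2$ on $S_c\cap B(\sigma r)$ and the same Gagliardo--Nirenberg lower bound $\mathcal{E}(u)\ge \mu r\bigl(\frac12-\frac{\sqrt c}{4}C_{3,4}^4\sqrt r\bigr)$ on $S_c\cap(B(r)\setminus B(\mu r))$, and compare them via \eqref{1725}. Your uniform-margin step for strictness of the infima, and your remark that $V\ge 0$ gives $\|(i\nabla+A)u\|_2\le\|u\|_{\dot{\Sigma}_{A,V}}$, only make explicit what the paper leaves implicit when it compares arbitrary $u_\sigma$ and $u_\mu$.
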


\begin{proof}
By assumption, we can choose $u_{\mu}\in {S_c}\cap(B(r)\backslash B(\mu r))$ and $u_{\sigma}\in {S_c}\cap B(\sigma r)$. According to Remark \ref{re2.4}, we have
\begin{equation*}
% \nonumber to remove numbering (before each equation)
 \begin{split}
\mathcal{E}(u_{\mu})
%& = \frac{1}{2}\|u_{\mu}\|^2_{\dot{\Sigma}_{A,V}} - \frac{1}{4}\int_{\R^3}|u|^4\\
& \ge \frac{1}{2}\|u_{\mu}\|^2_{\dot{\Sigma}_{A,V}} - \frac{\sqrt{c}}{4}C^4_{3,4}\|u_{\mu}\|^{3}_{\dot{\Sigma}_{A,V}}
\ge \|u_{\mu}\|^2_{\dot{\Sigma}_{A,V}}\Big(\frac{1}{2}-\frac{\sqrt{c}}{4}C^4_{3,4}
\sqrt{r}\Big)\\
&\quad\ge \mu r\Big(\frac{1}{2}-\frac{\sqrt{c}}{4}C^4_{3,4}
\sqrt{r}\Big).
\end{split}
\end{equation*}
Obviously,
$
\mathcal{E}(u_{\sigma})
\le \frac{1}{2}\|u_{\sigma}\|^2_{\dot{\Sigma}_{A,V}}\le \frac{1}{2}\sigma r.
$
Then  
$$
\mathcal{E}(u_{\sigma})<\mathcal{E}(u_{\mu}),
$$
because of \eqref{1725}.
This completes the proof.
\end{proof}

The following Pohozaev identity is crucial in proving that a local minimizer in ${S_c}\cap B(r)$ is also an energy ground state in ${S_c}\cap \Sigma_{A,V}$.  
Actually, it provides a key relation that does not involve the unknown Lagrange arising from the mass constraint; see \eq{ueq3.25} below, for example. 
Considering the more general cases $N =2,3$, and a generic subcritical nonlinearity $p$, with $2<p<2^*$, we have

\begin{proposition}\label{pr3.2}
Let $N=2,3$, $2<p<2^*$, $A,V\in C^{1,\alpha}(\R^N\backslash\{0\})$ satisfy  \eqref{V1} and \eq{V2}$(a)$, and $\la\in\R$. 
If $v\in{\Sigma}_{A,V}$ weakly solves
\begin{equation}\label{peq3.6}
(i\nabla + A(x))^2v + V(x)v- |v|^{p-2}v = \la v,
\end{equation}
then the following Pohozaev identify
\begin{equation*}
\begin{split}
\mathcal{P}(v):=\int_{\R^N}|\nabla v|^2  - \tfrac{N(p-2)}{2p}\int_{\R^N}|v|^p + T_{A,V}(v)=0
\end{split}
\end{equation*}
holds, where
\begin{equation}
\label{1505}
\begin{split}
T_{A,V}(v):&=  \Re\int_{\R^N}i(A(x)\cdot\nabla v)\bar{v} - \Re\int_{\R^N}i\Big(\sum_{j=1}^N\big(x\cdot \nabla A_j(x)\big)\partial_{x_j} v\Big)\bar{v} \\
& -\frac{1}{2}\int_{\R^N}\big(x\cdot \nabla V(x)\big) |v|^2 - \int_{\R^N}\Big(\sum_{j=1}^N A_j(x)\big(x\cdot\nabla A_j\big)\Big) |v|^2.
\end{split}
\end{equation}

\end{proposition}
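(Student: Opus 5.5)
The plan is the classical Pohozaev argument: test the equation \eqref{peq3.6} against the dilation generator $x\cdot\nabla \bar v$ and take real parts. A cutoff is needed, because $A,V$ are only $C^{1,\alpha}$ away from the origin and $v$ is only known to lie in $\Sigma_{A,V}$. By Remark \ref{Rreg}, $v\in L^\infty$, $|v|$ decays exponentially, $v\in W^{2,q}_{\loc}$ and $v\in C^{1,\alpha}(\R^N\setminus\{0\})$ (classical there, given the $C^{1,\alpha}$ regularity of $A,V$). We therefore multiply \eqref{peq3.6} by $\varphi_{\varepsilon,R}(x)\,x\cdot\nabla\bar v$, where $\varphi_{\varepsilon,R}$ is a smooth cutoff equal to $1$ on $\{2\varepsilon<|x|<R\}$ and vanishing near $0$ and outside $\{|x|<2R\}$, with $|\nabla\varphi_{\varepsilon,R}|\lesssim 1/|x|$. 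We integrate, take the real part, and let $\varepsilon\to0$, $R\to\infty$.

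Expand $(i\nabla+A)^2$ via \eqref{1205} and treat the terms one at a time.

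\begin{itemize}
\item The Laplacian term gives, by the standard identity, $\frac{N-2}{2}\int|\nabla v|^2$ plus cutoff errors.
\item The zero-order terms $(V-\lambda+|A|^2)v$ and $|v|^{p-2}v$ are handled via $\Re(f v\, x\cdot\nabla\bar v)=\frac f2\, x\cdot\nabla|v|^2$ and integration by parts. They yield $-\frac N2\int(V+|A|^2-\lambda)|v|^2-\frac12\int x\cdot\nabla(V+|A|^2)|v|^2$ and $-\frac Np\int|v|^p$.
\item The first-order magnetic terms $\Re\int\big(2iA\cdot\nabla v+i(\diverg A)v\big)x\cdot\nabla\bar v$ are integrated by parts, using that $\Re\big(i\,a\,\bar b\big)$ is antisymmetric. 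This should produce exactly the terms $\Re\int i(A\cdot\nabla v)\bar v$ and $-\Re\int i\sum_j(x\cdot\nabla A_j)\partial_jv\,\bar v$ appearing in \eqref{1505}, up to multiples of the quantity $\Re\int iA\cdot\nabla v\,\bar v$.
\end{itemize}

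Next, eliminate $\lambda$. Testing \eqref{peq3.6} with $\bar v$ gives the energy identity
\[
\int|(i\nabla+A)v|^2+V|v|^2-|v|^p=\lambda\int|v|^2 .
\]
Combining this with the relation from the dilation multiplier removes $\lambda$ and normalizes the coefficient of $\int|\nabla v|^2$ to $1$, with coefficient $N(p-2)/(2p)$ in front of $\int|v|^p$. The $|A|^2$ terms from both identities must combine with $\frac12 x\cdot\nabla|A|^2=\sum_jA_j\,x\cdot\nabla A_j$, leaving the last term of \eqref{1505}. I would check the bookkeeping by testing on a constant magnetic potential $A=\rho x^\bot$, where $x\cdot\nabla A_j=A_j$ and everything simplifies.

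The main obstacle is justifying the limits $\varepsilon\to0$ and $R\to\infty$.
\begin{itemize}
\item At infinity, the exponential decay of $v$ together with $v\in\Sigma_{A,V}$ kills the boundary contributions. Here \eqref{V2}(a) is essential: it gives $|x\cdot\nabla V|\lesssim V$ and $|x|^2|\nabla A_j|^2\lesssim V$, so all integrands, e.g. $(x\cdot\nabla A_j)\partial_jv\,\bar v$ via Cauchy--Schwarz with $\int V|v|^2<\infty$, are in $L^1$, and dominated convergence applies.
\item Near the origin, the cutoff errors are bounded by $\frac1\varepsilon\int_{\varepsilon<|x|<2\varepsilon}|x|\big(|\nabla v|^2+(V+|A|^2)|v|^2+|v|^p\big)$. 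This is $\lesssim\int_{|x|<2\varepsilon}\big(\cdots\big)\to0$ by integrability, since $v\in\Sigma_{A,V}$ and \eqref{924} give $(V+|A|^2)|v|^2\in L^1$ and $|\nabla v|^2\in L^1$.
\end{itemize}
With these bounds the limit passage is justified, and the identity $\mathcal P(v)=0$ follows.
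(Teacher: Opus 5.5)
Your proposal is essentially the paper's proof: the paper also tests \eqref{peq3.6} against $\eta_\varepsilon\, x\cdot\nabla\bar v$ with a cutoff vanishing near $0$ and near infinity, eliminates $\lambda$ using the identity obtained by testing with $\bar v$, and passes to the limit using $v\in\Sigma_{A,V}$ and \eqref{V2}(a); it treats $\Re\int i(A\cdot\nabla v)\,\eta_\varepsilon\, x\cdot\nabla\bar v$ by writing $x\cdot\nabla\bar v$ as a dilation derivative, which differs only cosmetically from your integration by parts. Two bookkeeping points: the Laplacian term gives $\frac{2-N}{2}\int|\nabla v|^2$, not $\frac{N-2}{2}\int|\nabla v|^2$ (only with this sign does adding $\frac N2$ times the energy identity normalize the gradient coefficient to $1$), and the first-order magnetic terms do combine to $(1-N)\Re\int i(A\cdot\nabla v)\bar v-\Re\int i\sum_j(x\cdot\nabla A_j)\partial_{x_j}v\,\bar v$, so the contribution $N\Re\int i(A\cdot\nabla v)\bar v$ from $\frac N2$ times the energy identity leaves exactly the first two terms of \eqref{1505}.
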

\begin{proof}
Note that $v\in C^{2,\alpha'}(\R^N\backslash\{0\})$ for some $\alpha'\in(0,1)$ by Remark \ref{Rreg}. 
Note also that the potential and magnetic field may also be singular at the origin. 
Therefore, applying truncation techniques is essential in the proof. 
Let $\bar\eta\in C^{\infty}(\R^N,[0,1])$  be a smooth function satisfying $\bar\eta = 1$ in $\R^N\backslash B_{2}(0)$ and $\bar\eta = 0$ on $B_1(0)$, and let $\eta_\va:=\bar{\eta}(\cdot/\va)(1-\bar{\eta}(\va\cdot))$. 

We have the following computation:
\begin{align*}
%\label{abeq3.2}
\begin{split}
    \Re\int_{\R^N}(-\Delta v)\big(\eta_\va x\cdot\nabla \bar{v}\big)&=\frac{2-N}{2}\int_{\R^N}\eta_\va |\nabla v|^2 - \frac{1}{2}\int_{\R^N}(x\cdot \nabla \eta_\va)|\nabla v|^2 
   \\ &\quad +\int_{\R^N} (\nabla v\cdot\nabla \eta_\epsilon)(x\cdot\nabla\bar v),
    \\[4pt]
     \Re\int_{\R^N}V(x)v (\eta_\va x\cdot \nabla \bar{v}) &= -\frac{N}{2}\int_{\R^N}  \eta_\va V(x)|v|^2-\frac{1}{2}\int_{\R^N}\eta_\va \big(x\cdot \nabla V(x)\big) |v|^2\\
     &\quad  - \frac{1}{2}\int_{\R^N}(x\cdot \nabla\eta_\va)V(x)|v|^2 , \\[4pt]
    \Re\int_{\R^N}|A(x)|^2v(\eta_\va x\cdot \nabla \bar{v})&= -\frac{N}{2}\int_{\R^N}  \eta_\va|A(x)|^2|v|^2-\sum_{i=1}^N\int_{\R^N}\eta_\va A_i(x)\big(x\cdot\nabla A_i(x)\big) |v|^2\\
    &\quad - \frac{1}{2}\int_{\R^N}(x\cdot \nabla \eta_\va)|A(x)|^2|v|^2,
\end{split}
\end{align*}

\begin{align*}
 &\quad\Re\int_{\R^N}i\big(A(x)\cdot\nabla v\big)(\eta_\va x\cdot \nabla \bar{v})  \\
& = \Re\int_{\R^N}i\, \eta_\va\big(A(x)\cdot \nabla v\big)\frac{d}{dt}\big(\bar{v}(tx)\big)_{t=1}\\
%& = \Re\int_{\R^N}i\big(A(x)\cdot \nabla v\big)\frac{d}{dt}\big(\bar{v}_t\big)_{t=1}\\
%& = \Re \frac{d}{dt}\Big(\int_{\R^N}i\big(A(x)\nabla v\big)\bar{v}_t\Big)_{t=1}\\
& = \Re \frac{d}{dt}\Big(\frac{1}{t^N}\int_{\R^N}i\, \eta_\va(x/t)\big(A(x/t)\cdot\nabla v(x/t)\big)\bar{v}(x)\Big)_{t=1}\\
 &= -N\Re \int_{\R^N}i\eta_\va \big(A(x)\cdot\nabla v \big)\bar{v} 
- \Re\int_{\R^N}i\Big(\sum_{j=1}^N\eta_\va (x\cdot \nabla A_j(x))\partial_{x_j}v\Big)\bar{v}\\
&\quad - \Re\int_{\R^N}i\eta_\va\sum_{j=1}^NA_j(x) (x\cdot \nabla \partial_{x_j}  v) \bar{v} - \Re\int_{\R^N}i(x\cdot\nabla\eta_\va) (A(x)\cdot \nabla v)\bar{v},
\end{align*}
where
\begin{align*}
&\quad \Re\int_{\R^N}i\eta_\va \sum_{j=1}^N A_j(x)( x\cdot \nabla \partial_{x_j} v) \, \bar{v}\\
&= -\Re\int_{\R^N}i\eta_\va(x\cdot\nabla v)(\text{div} A(x))\bar{v} - \Re\int_{\R^N}i\eta_\va(A(x)\cdot\nabla v)\bar{v}\\
&\quad  - \Re\int_{\R^N}i\eta_\va(A(x)\cdot\nabla\bar{v})(x\cdot \nabla v)- \Re\int_{\R^N}i(A(x)\cdot\nabla \eta_\va)(x\cdot \nabla v)\bar{v}\\
&= \Re\int_{\R^N}i\eta_\va(\text{div} A(x))v(x\cdot\nabla \bar{v}) { -}\Re\int_{\R^N}i\eta_\va(A(x)\cdot \nabla {v})\bar v\\
&+ \Re\int_{\R^N}i\eta_\va(A(x)\cdot\nabla{v})(x\cdot \nabla \bar{v})  - 
\Re\int_{\R^N}i (A(x)\cdot\nabla  \eta_\va)( x\cdot \nabla v )\bar v.
\end{align*}

Then
\begin{equation*}
%\label{abeq3.3}
\begin{split}
 &\quad 2\Re\int_{\R^N}i\big(A(x)\cdot \nabla v\big)(\eta_\va x\cdot \nabla \bar{v})\\
&=-N\Re \int_{\R^N}i\eta_\va \big(A(x)\cdot\nabla v(x)\big)\bar{v}(x)- \Re\int_{\R^N}i\eta_\va \Big(\sum_{j=1}^N(x\cdot \nabla A_j(x))\partial_{x_j}v\Big)\bar{v}\\
&\quad - \Re\int_{\R^N}i\eta_\va(\text{div} A(x))v (x\cdot\nabla \bar{v})   +   \Re\int_{\R^N}i\eta_\va(A(x)\cdot\nabla v)\bar{v}\\
&\quad -  \Re\int_{\R^N}i(x\cdot\nabla \eta_\va)(A(x)\cdot\nabla v)\bar{v}
+ \Re\int_{\R^N}i(A(x)\cdot\nabla \eta_\va)( x \cdot\nabla {v})\bar v.
\end{split}
\end{equation*}
%\todobl[inline]{It seems to me that \eq{abeq3.3} has to be fixed. I do not do that because I can be wrong.}
Obviously,  $|x\nabla\eta_\va|\le C$,  $x\in \R^3$ and   $\text{supp}\nabla \eta_\va \subset \big( B_{2\va}(0) \setminus B_{\va}(0) \big) \cup  \big(B_{\frac{2}{\va}}(0) \setminus B_{\frac{1}{\va}(0) } \big)$ .  
Hence, since $v\in \Sigma_{A,V}$, we conclude
\begin{equation*}
    %\label{943}
\begin{split}
\int_{\R^N}(x\cdot\nabla\eta_\va)|\nabla v|^2, \,\,\int_{\R^N}(x\cdot \nabla\eta_\va)|A(x)|^2|v|^2= o_\va(1) ; \\ \Re\int_{\R^N}i(x\cdot\nabla \eta_\va)(A(x)\cdot\nabla v)\bar{v}, \,\,
 \Re\int_{\R^N}i(A(x)\cdot\nabla \eta_\va)(x \cdot\nabla v)\bar{v} 
= o_\va(1).
\end{split}
\end{equation*}

Moreover, also taking into account \eq{V2}, we have
\begin{align*}
%\label{abeq3.4}
\begin{split}
\int_{\R^N}\eta_\va \big(x\cdot \nabla V(x)\big)|v|^2&= \int_{\R^N} \big(x\cdot \nabla V(x)\big)|v|^2 + o_\va(1),\\  \sum_{i=1}^N\int_{\R^N}\eta_\va A_i(x\cdot\nabla A_i) |v|^2 &= \sum_{i=1}^N\int_{\R^N} A_i(x\cdot\nabla A_i) |v|^2 + o_\va(1)
\end{split}
\end{align*}
and

\beq
\label{ueq3.8}
\begin{split}
 &\quad 2\Re\int_{\R^N}i\big(A(x)\cdot\nabla v\big)(\eta_\va x\cdot \nabla \bar{v})\\
&=-N\Re \int_{\R^N}i\big(A(x)\cdot\nabla v\big)\bar{v} - \Re\int_{\R^N}i\Big(\sum_{j=1}^N(x\cdot \nabla A_j(x))\partial_{x_j}v\Big)\bar{v}\\
&\quad - \Re\int_{\R^N}i(\text{div} A(x)) v (x\cdot\nabla \bar{v})  + \Re\int_{\R^N}i(A(x)\cdot\nabla v)\, \bar{v}+o_\va(1).
\end{split}
\eeq

 Taking into account that $\partial_{x_i}|v|^p=p|v|^{p-2}\Re (v\, \bar v_{x_i})$, by similar computations we get
$$
%\label{1401}
\Re \int_{\R^N}|v|^{p-2}v\, (\eta_\va x\cdot \nabla \bar v)=-\frac N p \int_{\R^N}|v|^p+o_\va (1),
$$
$$
\Re \int_{\R^N} v\, (\eta_\va x\cdot \nabla \bar v)=-\frac N 2 \int_{\R^N}|v|^2+o_\va (1).
$$
Testing \eq{peq3.6}  with $\eta_\va x\cdot \nabla \bar{v}$, by the definition of $(i\nabla + A(x))^2$ and the computations above  we find
\beq
\label{eq3.11}
\begin{split}
&\quad\frac{2-N}{2}\int_{\R^N}|\nabla v|^2 - \frac{N}{2}\int_{\R^N}|A(x)|^2|v|^2 - \frac{N}{2}\int_{\R^N}V(x)|v|^2 + \frac{N}{p}\int_{\R^N}|v|^p\\
&\quad-\frac{1}{2}\int_{\R^N}\big(x\cdot \nabla V(x)\big) |v|^2 - \int_{\R^N}\Big(\sum_{i=1}^NA_i(x)\big(x\cdot \nabla A_i(x)\big)\Big)|v|^2\\
&\quad + 2\Re\int_{\R^N}i\big(A(x)\cdot \nabla v\big)(  \eta_\va x\cdot \nabla \bar{v})+\Re\int_{\R^N}i(\text{div} A(x))v(x\cdot\nabla \bar{v})\\
& = -\frac{N}{2}\la \int_{\R^N}|v|^2 + o_\va(1).
\end{split}
\eeq
On the other hand,  multiplying \eq{peq3.6} with $\bar{v}$, we find
\beq
\label{eq3.12}
\int_{\R^N}|\nabla v|^2 + \int_{\R^N}|A(x)|^2|v|^2 + \int_{\R^N}V(x)|v|^2 - \int_{\R^N}|v|^p + 2\Re\int_{\R^N}i (A(x)\cdot \nabla v)\bar{v}
= \la\int_{\R^N}|v|^2.
% \begin{split}
% &\quad \int_{\R^N}|\nabla v|^2 + \int_{\R^N}|A(x)|^2|v|^2 + \int_{\R^N}V(x)|v|^2 - \int_{\R^N}|v|^p + 2\Re\int_{\R^N}i\nabla v A(x)\bar{v}\\
% & = \la\int_{\R^N}|v|^2.
% \end{split}
\eeq
Thus, from \eq{eq3.11} and \eq{eq3.12}, it follows
\begin{equation}
\label{1533}
\begin{split}
 o_\va (1)=& \int_{\R^N}|\nabla v|^2  + \frac N2\Big(\frac{2-p}{p}\Big)\int_{\R^N}|v|^p+N\Re\int_{\R^N}i(A(x)\cdot\nabla v) \bar{v} \\
&+  2\Re\int_{\R^N}i\big(A(x)\cdot \nabla v\big)\big(\eta_\va) x\cdot \nabla \bar{v}\big)+  \Re\int_{\R^N}i(\text{div}A)v(x\cdot\nabla \bar{v})\\
& - \Big(\frac{1}{2}\int_{\R^N}\big(x\cdot \nabla V(x)\big) |v|^2 +  \int_{\R^N}\Big(\sum_{i=1}^N A_i(x)\big(x\cdot\nabla A_i\big)\Big) |v|^2 \Big).
\end{split}
\end{equation}
Finally, inserting \eq{ueq3.8} in \eqref{1533}  and letting $\va\to 0$, we complete the proof.
% \begin{equation*}
% \begin{split}
% &\quad \int_{\R^N}|\nabla v|^2  - \delta_p\int_{\R^N}|v|^p + N\Re\int_{\R^N}i\nabla v A(x)\bar{v} +  2\Re\int_{\R^N}i\big(A(x)\nabla v\big)x\cdot \nabla \bar{v}\\
% &\quad + \Re\int_{\R^N}i(\text{div}A)v(x\cdot\nabla \bar{v}) - \Big[\frac{1}{2}\int_{\R^N}x\nabla V(x) |v|^2 + \int_{\R^N}\Big(\sum_{i=1}^N A_i(x)\big(x\cdot\nabla A_i\big)\Big) |v|^2 \Big]\\
% &= \int_{\R^N}|\nabla v|^2  - \delta_p\int_{\R^N}|v|^p + \Re\int_{\R^N}i(A(x)\cdot\nabla v)\bar{v}- \Re\int_{\R^N}i\Big(\sum_{j=1}^N(x\nabla A_j(x))\partial x_j v\Big)\bar{v}\\
% &\quad - \Big[\frac{1}{2}\int_{\R^N}x\nabla V(x) |v|^2 + \int_{\R^N}\Big(\sum_{i=1}^N A_i(x)\big(x\cdot\nabla A_i\big)\Big) |v|^2 \Big]\\
% & = 0.
% \end{split}
% \end{equation*}
% This completes the proof.
\end{proof}

\begin{example}%\label{re3.3}
 In the case of nonlinear Schr\"{o}dinger equation with rotation, where $A(x) = \rho(-x_2,x_1,0)$ and  $V(x) = |x|^2 - \rho^2 (x^2_1+x^2_2)$, it turns out $T_{A,V}(v) = -\int_{\R^3} |x|^2|v|^2$, so the Pohozaev identity takes the form
$$
\mathcal{P}(v) = \int_{\R^3}|\nabla v|^2 - \frac{3}{4}\int_{\R^3}|v|^4 - \int_{\R^3}|x|^2|v|^2 = 0
$$
(see also \cite{X.Luo-T.Yang-JDE-2020}).
\end{example}

We are now in a position to give the proof of Theorem \ref{th1.1}.

\begin{proof}[\textsf{1. Proof of part 1 of Theorem \ref{th1.1}: Existence of local minimizer.}] 
For $(\si,r)\in(0,1)\times (0,\infty)$, denote
\begin{equation*}
%\label{neq1.4}
F_{A,V}(\si,r) := \min\left\{\frac{\si r}{\hat{\la}_{1,A,V}},\frac{1}{C^8_{3,4}}\frac{4}{r}(1-\si)^2\right\},
\end{equation*}
and let us set
\begin{equation}
\label{1820}
c_{A,V}(r):=\max_{\si\in(0,1)}F_{A,V}(\si,r)=F(\si_r,r)
\end{equation}
for a suitable $\si_r\in(0,1)$ (see \eq{1549}).
Now, consider
\[
0<c<c_{A,V}(r).
\]
Clearly, $S_c\cap B(\si_r r)\ne \emptyset$ by Lemma \ref{le2.7} and there exists $\mu_r\in(0,1)$ such that
\begin{equation}\label{adeq3.5}
0<c<\Big(\frac{1}{C^4_{3,4}}\frac{2}{\sqrt{r}}\frac{\mu_r-\si_r}{\mu_r}\Big)^2.
\end{equation}
Then, letting $\hat{\psi}_{c,\tau}$ be the function given in \eq{AAeq2.4},  there exists  $\tau_{\mu_r}>0$  such that
\[\hat{\psi}_{c,\tau_{\mu_r}}\in {S_c}\cap (B(r)\backslash B(\mu_r r)).\]
Hence,
\begin{equation}\label{Aeq3.18}
{S_c}\cap (B(r)\backslash B(\mu_r  r))\ne\emptyset\ \text{and}\ {S_c}\cap B(\si_r  r)\ne\emptyset.
\end{equation}
At this point, the proof is standard. 
Let $(u_n)$ be a minimizing sequence for $m^{r}_{{c}}=\inf_{u\in {S_c}\cap B(r)}\mathcal{E}(u)$. Obviously, $(u_n)$ is bounded in ${\Sigma}_{A,V}$. 
By the compactness embedding (see Proposition  \ref{pr2.5}), there exists $u_{c,A,V}\in{\Sigma}_{A,V}$ such that
$$
\left\{
  \begin{array}{ll}
    u_n\rightharpoonup u_{{c},A,V}, & \text{in}\ {\Sigma}_{A,V} \\
    u_n\to u_{{c},A,V}, & \text{in}\ L^q(\R^3,\C)\ \text{for all}\ q\in[2,6) \\
    u_n\to u_{{c},A,V}, & \text{a.e. in}\ \R^3.
  \end{array}
\right.
$$
Consequently, $u_{{c},A,V}\in {S_c}\cap B(r)$, by the weak lower semi-continuous of $\|\cdot\|_{\dot{\Sigma}_{A,V}}$, and
\begin{equation*}
%\label{ueq3.15}
m^{r}_{{c}}\le \mathcal{E}(u_{{c},A,V})\le \lim\limits_{n\to\infty} \mathcal{E}(u_n)= m^{r}_{{c}}.
\end{equation*}
Thus, $\mathcal{E}(u_{{c},A,V}) = m^{r}_{{c}}$ and $u_n\to u_{{c},A,V}$ in ${\Sigma}_{A,V}$ as $n\to \infty$. This implies that any minimizing sequence for $m^r_{{c}}$ is precompact and $\mathcal{M}^{r}_{{c}}\ne\emptyset$.

Notice that from \eq{Aeq3.18}, \eq{adeq3.5}, and Proposition \ref{pr3.1}, we infer
\beq\label{ntang}
0<c<c_{A,V}(r) \quad \Longrightarrow\quad \sup_{u\in \mathcal{M}^{r}_c}\|u\|_{\dot\Sigma_{A,V}}<r.
\eeq
Then $u_{{c},A,V}$ is indeed a critical point of $\mathcal{E}|_{{S_c}}$. 
So, there exists a Lagrange multiplier $\la_{{c},A,V}\in\R$ such that $(u_{{c},A,V},\la_{{c},A,V})$ is a weak solution to the problem \eq{eq1.1} if $0<c<c_{A,V}(r)$.

\end{proof}

Next, inspired by \cite{Bellazzni-et.al.-CMP-2017}, we show that $u_{{c},A,V}$ is an energy ground state, for small ${c}>0$. 
%The asymptotic behavior of $u_{{c},A,V}$ as $c\to0^+$ and the stability of the set $\mathcal{M}^r_c$ are also studied.
One of the key steps is to use the Pohozaev identity (Proposition \ref{pr3.2}) in \eq{ueq3.25}.
\begin{proof}[\textsf{2. Proof of part 2 of Theorem \ref{th1.1}: local minimizer to energy ground state.}]
In this proof, we consider $0<c<c_{A,V}(r)$.
Denote
\[
A_{m^r_c} = \{v\in{S_c}\, :\,  (\mathcal{E}|_{{S_c}})'(v)=0,\mathcal{E}(v)\le m^r_c\},
\]
and notice that $A_{m^r_c}\neq\emptyset$ because $u_{c,A,V}\in A_{m^r_c}$. 
Clearly,
\begin{equation}\label{adeq3.9}
\inf_{u\in\{v\in {S_c}\, :\,  (\mathcal{E}|_{{S_c}})'(v)=0\}}\mathcal{E}(u) = \inf_{u\in A_{m^r_c}}\mathcal{E}(u).
\end{equation}

We claim that $A_{m^r_c}\subset B(r)$ if $c>0$ is suitable small.
Let $v\in{\Sigma}_{A,V}$ be a function satisfying
$$
(\mathcal{E}|_{{S_c}})'(v) = 0.
$$
Then $v$ satisfies
\begin{equation*}
\big((i\nabla + A(x))^2 + V(x)\big)v - |v|^{2}v = \la v,\ x\in\R^3
\end{equation*}
for some $\la\in\R$. 
It follows from Proposition \ref{pr3.2} that
$$
\frac{1}{4}\int_{\R^3}|v|^4=\frac{1}{3}\int_{\R^3}|\nabla v|^2  + \frac{1}{3}\, T_{A,V}(v),
$$
where $T_{A,V}$ is defined in \eqref{1505}.
% \beq
% \begin{split}
% T_{A,V}(v)&= \Re\int_{\R^3}i(A(x)\cdot\nabla v)\bar{v} - \Re\int_{\R^3}i\Big(\sum_{j=1}^3(x\nabla A_j(x))\partial_{x_j} v\Big)\bar{v} \\
% & - \Big[\frac{1}{2}\int_{\R^3}x\nabla V(x) |v|^2 + \int_{\R^3}\Big(\sum_{i=1}^3 A_i(x)\big(x\cdot\nabla A_i\big)\Big) |v|^2 \Big].
% \end{split}
% \eeq
Then
\beq
\label{ueq3.25}
\begin{split}
\mathcal{E}(v)&= \frac{1}{6}\int_{\R^3}|\nabla v|^2 + \frac{1}{2}\int_{\R^3}\big(|A(x)|^2 + V(x)\big)|v|^2 + \Re\int_{\R^3}i (A(x)\cdot\nabla v)\bar{v} - \frac{1}{3}T_{A,V}(v)\\
&= \frac{1}{6}\int_{\R^3}|\nabla v|^2 + \frac{1}{2}\int_{\R^3}\big(|A(x)|^2 + V(x)\big)|v|^2 + \Re\int_{\R^3}i (A(x)\cdot\nabla v)\bar{v}\\
&\quad - \frac{1}{3}\Re\int_{\R^3}i\big(A(x)\cdot \nabla v\big)\bar{v} + \frac{1}{3}\Re\int_{\R^3}i\Big(\sum_{j=1}^3\big(x\cdot\nabla A_j(x)\big)\partial_{x_j} v\Big)\bar{v}\\
&\quad + \frac{1}{3}\Big[\frac{1}{2}\int_{\R^3}\big(x\cdot\nabla V(x)\big)|v|^2 + \int_{\R^3}\Big(\sum_{j=1}^3A_j(x)\big(x\cdot \nabla A_j(x)\big)\Big)|v|^2\Big].
\end{split}
\eeq
Consequently, by the Cauchy inequality, for all $\va>0$, we have
\beq
\label{Aeq3.25}
\begin{split}
\mathcal{E}(v)&\ge\Big(\frac{1}{6}-\frac{\va}{2}\Big)\int_{\R^3}|\nabla v|^2 + \int_{\R^3}\Big[\frac{1}{2}V(x) + \frac{1}{6}\big(x\cdot\nabla V(x)\big) + \Big(\frac{1}{2} |A(x)|^2- \frac{1}{3\va}|A(x)|^2\\
 &\qquad - \frac{1}{6\va}\sum_{j=1}^3\big|\big(x\cdot \nabla A_j(x)\big)\big|^2 + \frac{1}{3}\sum_{j=1}^3A_j(x)\big(x\cdot \nabla A_j(x)\big)\Big)\Big]|v|^2\\
&= \Big(\frac{1}{6}-\frac{\va}{2}\Big)\int_{\R^3}|\nabla v|^2 + \tilde{T}_{A,V,\va}  (x)\, |v|^2
\end{split}
\eeq
(see \eq{Po}).
In particular, letting $\va_0\in\big(0,\frac{1}{3}\big)$ and $\alpha_0$ be the two constants given in the assumption {\eq{V2}}, we can infer from \eq{Aeq3.25} and the assumption \eq{V1} that
$$
 \int_{\R^3} \tilde{T}_{A,V,\va_0}(x)\, |v|^2\,\ge \alpha_0\int_{\R^3}V(x)|v|^2\ge \frac{\alpha_0}{2}\min\{1,{ \bar \a}\}\int_{\R^3}(V(x)+|A(x)|^2)|v|^2.
$$
Note that
\begin{equation}\label{eq5-1}
2m^r_{{c}} \le 2\mathcal{E}(\sqrt{c}\hat{\psi}_{1,A,V})< \|\sqrt{c}\hat{\psi}_{1,A,V}\|^2_{\dot{\Sigma}_{A,V}} = c\hat{\la}_{1,A,V}.
\end{equation}
Therefore, letting $\hat{v}\in A_{m^r_c}$,  by Proposition \ref{pr2.5}, we have
\begin{equation}
\label{Aeq3.26}
\begin{split}
\frac{\hat{\la}_{1,A,V}}{2}c\ge m^r_c\ge \mathcal{E}(\hat{v})
&\ge \min\{\frac{1}{6} - \frac{\va_0}{2},\frac{\alpha_0}{2}\min\{1,{\bar\a}\}\}\|\hat{v}\|^2_{\ddot{\Sigma}_{A,V}}\\
&\ge \frac{1}{2}\min\{\frac{1}{6} - \frac{\va_0}{2},\frac{\alpha_0}{2}\min\{1,{\bar\a}\}\}\|\hat{v}\|^2_{\dot{\Sigma}_{A,V}}.
%\ge \frac{1}{2}\min\{\frac{1}{6} - \frac{\va_0}{2},\frac{\alpha_0}{2}\min\{1,\a\}\}\|v\|^2_{\dot{\Sigma}_{A,V}}
%&\ge \frac{\min\{\frac{1}{6} - \frac{\va_0}{2},\alpha_0\}}{\Lambda_\si} \|v\|^2_{\dot{\Sigma}_{A,V}}
\end{split}
\end{equation}
Hence, letting
\[0<c<\frac{\min\{1 - 3\va_0,3\alpha_0\min\{1,\bar\a\}\}}{6\hat{\la}_{1,A,V}}\cdot r,\]
we have
$\|\hat{v}\|^2_{\dot{\Sigma}_{A,V}}\le r$,
which gives the claim.

Finally, by the claim above, it holds $\inf_{u\in A_{m^r_c}}\mathcal{E}(u)\ge m^r_c$, for every $c$ such that 
 \beq
\label{1517}
0<c<\tilde c_{A,V}(r):=\min\left\{c_{A,V}(r)\, , \,  \frac{\min\{1 - 3\va_0,3\alpha_0\min\{1,\bar\a\}\}}{6\hat{\la}_{1,A,V}}\cdot r\right\}.
\eeq
Then, by \eq{adeq3.9}, we have 
\[
\mathcal{E}(u_{c,A,V}) = m^r_c = \inf\{\mathcal{E}(v)\, :\, v\in S_c,\ (\mathcal{E}|_{S_c})'(v) = 0\},\quad \mbox{ for }\ 0<c<\tilde c_{A,V}(r),
\]
which completes the proof.
\end{proof}

\begin{proof}[\textsf{Proof of Proposition \ref{cr1.4}}.]
Let $c\ge c^* = 2\frac{\hat{\la}_{1,A,V}}{\|\hat{\psi}_{1,A,V}\|^4_4}$ and suppose, by contradiction, that $r>0$ exists so that \eq{eq1.1} has a solution $v$ achieving $m^r_c$. 
 By \eq{Aeq3.26} we have
\begin{equation}\label{adeq3.12}
\mathcal{E}(v) \ge \min\left\{\frac{1}{6} - \frac{\va_0}{2},\frac{\alpha_0}{2}\min\{1,\bar\a\}\right\}\|v\|^2_{\ddot{\Sigma}_{A,V}}>0.
\end{equation}

 By Lemma \ref{le2.7}, it has to be $ r\ge c\hat{\la}_{1,A,V}$.
Then $\sqrt{c}\hat{\psi}_{1,A,V}\in S_c\cap B(r)$ and
\beq
\label{1435}
\inf_{S_c\cap B(r)}\mathcal{E}(u)\le \mathcal{E}(\sqrt{c}\hat{\psi}_{1,A,V}) = \frac{c}{2}\Big(\hat{\la}_{1,A,V} - \frac{c}{2}\|\hat{\psi}_{1,A,V}\|^4_4\Big)\le \frac{c}{2}\Big(\hat{\la}_{1,A,V} - \frac{c^*}{2}\|\hat{\psi}_{1,A,V}\|^4_4\Big) = 0.
\eeq
But \eqref{1435} implies
$$
\mathcal{E}(v) = m^r_c \le 0,
$$
which is a contradiction to \eq{adeq3.12}. 
Thus, the proof is concluded. 
\end{proof}

\medskip

At the end of this section, we prove the stability assertion in Theorem \ref{th1.1}. 
First, we verify that $\mathcal{M}^r_c$ is compact. 
For a $u\in \Sigma_{A,V}$, we denote the flow associated with the initial value $u$ as $\psi_u(t)$.

\begin{lemma}
    \label{Lcomp}
  The minimal set $\mathcal{M}^r_c$ is compact in $\Sigma_{A,V}$.
\end{lemma}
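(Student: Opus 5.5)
To prove that $\mathcal{M}^r_c$ is compact in $\Sigma_{A,V}$, I will take an arbitrary sequence $(u_n)\subset\mathcal{M}^r_c$ and extract a subsequence converging strongly in $\Sigma_{A,V}$ to an element of $\mathcal{M}^r_c$. This sequential compactness suffices, since $\Sigma_{A,V}$ is a metric space. The key observation is that every sequence in $\mathcal{M}^r_c$ is, in particular, a minimizing sequence for $m^r_c$, because $\mathcal{E}(u_n)=m^r_c$ for all $n$. The argument in the proof of part 1 of Theorem \ref{th1.1} already shows that any minimizing sequence for $m^r_c$ is precompact. I will repeat that argument explicitly rather than just quote it.

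\emph{Step 1 (bounds and weak limit).} Since $u_n\in S_c\cap B(r)$, we have $\|u_n\|^2_{\Sigma_{A,V}}\le r+c$, so $(u_n)$ is bounded in the Hilbert space $\Sigma_{A,V}$. Up to a subsequence, $u_n\rightharpoonup u$ in $\Sigma_{A,V}$. By the compact embedding $\Sigma_{A,V}\hookrightarrow L^2(\R^3)$ of Proposition \ref{pr2.5}, $u_n\to u$ in $L^2$, so $\|u\|_2^2=c$. Weak lower semicontinuity of $\|\cdot\|_{\dot\Sigma_{A,V}}$ gives $u\in B(r)$, hence $u\in S_c\cap B(r)$.

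\emph{Step 2 ($L^4$ convergence).} Remark \ref{re2.4} gives
\[
\|u_n-u\|_4\le C_{3,4}\,\|(i\nabla+A)(u_n-u)\|_2^{3/4}\,\|u_n-u\|_2^{1/4}.
\]
The first factor is bounded and the second tends to zero, so $u_n\to u$ in $L^4$, and therefore $\int|u_n|^4\to\int|u|^4$.

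\emph{Step 3 (minimality).} By weak lower semicontinuity of the quadratic part,
\[
m^r_c\le\mathcal{E}(u)\le\liminf_{n\to\infty}\mathcal{E}(u_n)=m^r_c .
\]
Hence $u\in\mathcal{M}^r_c$.

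\emph{Step 4 (strong convergence).} Since $\mathcal{E}(u_n)\to\mathcal{E}(u)$ and the quartic terms converge by Step 2, we get $\|u_n\|_{\dot\Sigma_{A,V}}\to\|u\|_{\dot\Sigma_{A,V}}$. Together with $\|u_n\|_2=\|u\|_2$, this gives convergence of the full $\Sigma_{A,V}$-norms. Weak convergence plus convergence of norms in a Hilbert space yields $u_n\to u$ strongly in $\Sigma_{A,V}$.

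The only delicate point is Step 2. Strong $L^2$ convergence alone does not control the $L^4$ norm, so the argument relies on the Gagliardo–Nirenberg interpolation with the diamagnetic inequality, i.e.\ the uniform bound on the magnetic gradient. Everything else is standard Hilbert-space reasoning.
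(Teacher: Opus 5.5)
Your proof is correct and follows the same route as the paper: you bound $(u_n)$ in $\Sigma_{A,V}$, pass to a weak limit that converges strongly in $L^2$ and $L^4$ and lies in $S_c\cap B(r)$, and then get strong convergence because the energies converge while the quartic terms do. The paper phrases the last step by contradiction ($m^r_c\le\mathcal{E}(u_0)<\liminf\mathcal{E}(u_n)=m^r_c$), whereas you use weak convergence plus convergence of norms directly; you also spell out the Gagliardo--Nirenberg/diamagnetic step for $L^4$ convergence and the membership $u\in\mathcal{M}^r_c$, both of which the paper leaves implicit.
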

\begin{proof} In fact, if $(u_n)$ is a sequence in $\mathcal{M}^r_c$, then $(\|u_n\|^2_{\dot{\Sigma}_{A,V}})$ is bounded, and, taking into account the compact embedding of $\Sigma_{A,V}$ in $L^2$, we can find $u_0\in \Sigma_{A,V}$ such that $u_n\to u_0$, as $n\to\infty$, weakly in $\Sigma_{A,V}$ and strongly in $L^2$ and in $L^4$.
As a consequence, $u_0\in S_c\cap B(r)$.
then, $u_n$ has to strongly converge to $u_0$ in $\Sigma_{A,V}$, otherwise, also since $\|u_n\|_4\to \|u_0\|_4$, we infer $ m^r_c\le \mathcal{E}({u_0})<\liminf\limits_{n\to\infty}\mathcal{E}({u_n})=m^r_c$.
\end{proof}

\begin{proof}[\textsf{3. Proof of part 3 of Theorem \ref{th1.1}: orbital stability of $\mathcal{M}^r_{{c}}$.}] 
 The first claim is that $\de>0$ exists such that if $\dist (u,\mathcal{M}^r_c)_{\Sigma_{A,V}}<\de$, then $\psi_u(t)$ exists globally in time.

\smallskip

We argue by contradiction. 
Suppose, contrary to our claim, that there exists a sequence $\de_n\to 0$, as $n\to\infty$, and $u_n$ in $\Sigma_{A,V}$ satisfying $\dist (u_{n},\mathcal{M}^r_c)_{\Sigma_{A,V}}<\de_n$ and let the corresponding maximal time $T_{n,\max}$ be finite.
Hence, we must have
\begin{equation}
\label{1821}
\limsup_{t\to T^-_{n,\max}}\|\psi_{u_n}(t)\|^2_{\dot{\Sigma}_{A,V}} = \infty, \qquad \forall n\in\N.
\end{equation}
Indeed, if $\|\psi_{u_n}(t)\|^2_{\dot{\Sigma}_{A,V}}$ is bounded then,  since  $\psi_{u_n}$ solves the evolution equation \eq{keq1.1}, we see that $\frac{d}{d\,t}\psi_{u_n}$ is also bounded on $[0,T_{n,\max})$.
So, we can proceed in a standard way and extend $\psi_{u_n}$ in $T_{n,\max}$, and in a neighborhood of $T_{n,\max}$, by the assumption of local well-posedness.
This is in contradiction with the maximality of $T_{n,\max}$, and then \eq{1821} follows.

Now, by Lemma \ref{Lcomp} we can find $u_0\in\mathcal{M}^r_c$ such that $u_n$ converges to $u_0$,  strongly in $\Sigma_{A,V}$.  
By the continuity of $\|\psi_{u_n}(t)\|^2_{\dot{\Sigma}_{A,V}}$, and since $\dist (\mathcal{M}^r_c,\partial B(r))_{\Sigma_{A,V}}>0$ by \eq{ntang}, from \eqref{1821} we infer that there exists $t_n\in (0,T_{n,\max})$ such that
\beq
\label{1847}
\|\psi_{u_n}(t_n)\|^2_{\dot{\Sigma}_{A,V}}=r.
\eeq
By mass and energy preservation, as $n\to\infty$ we obtain
\begin{equation*}
\|\psi_{u_n}(t_n,\cdot)\|^2_2 = \|u_n(\cdot)\|^2_2\to \|u_0\|^2_2 = c\ \mbox{ and }\  \mathcal{E}(\psi_{u_n}(t_n,\cdot)) = \mathcal{E}(u_n)\to \mathcal{E}(u_{0}) = m^r_c.
\end{equation*}
Arguing as in Lemma \ref{Lcomp}, we see that there exists a $u_*\in \mathcal{M}^r_c$ satisfying $\psi_{u_n}(t_n,\cdot)\to u_*$ in $\Sigma_{A,V}$, up to a subsequence. 
In particular, taking into account \eqref{1847}, we infer $\|u_*\|^2_{\dot{\Sigma}_{A,V}} = r$ and $\mathcal{E}(u_*) = m^r_c$, contrary to \eq{ntang}. 
This completes the proof of the first claim.

\medskip

To end the proof of the stability, we argue by contradiction and suppose that $\mathcal{M}^r_c$ is not orbitally stable. 
Then, taking into account Lemma \ref{Lcomp}, we find $u_0\in\mathcal{M}^r_c$,   $\va_0>0$,  a sequence of initial data $u_{n}\in \Sigma_{A,V}$ satisfying
\begin{equation}\label{eq3.25}
\lim\limits_{n\to\infty}\|u_n-u_0\|_{\Sigma_{A,V}} = 0,
\end{equation}
a sequence of time $t_n\in [0,\infty)$  and a sequence of flows $\psi_{u_n}(t,\cdot)$ with $\psi_{u_n}(0,\cdot) = u_n$,  $n\in \N$, such that
\begin{equation}\label{ueq3.26}
\inf_{u\in\mathcal{M}^r_c}\|\psi_{u_n}(t_n,\cdot)-u\|_{\dot{\Sigma}_{A,V}}\ge\va_0.
\end{equation}
By \eq{eq3.25}, Propositions \ref{pr2.5},  and \eqref{ntang},
it holds
\begin{equation}
\label{1255}
\lim_{n\to\infty}\|u_n\|_{\dot{\Sigma}_{A,V}}=\|u_{0}\|_{\dot{\Sigma}_{A,V}}< r.
\end{equation}
%Hence, by  \eq{eq3.25} and Lemma \ref{le3.5}, there exists a $\hat{c}^* = \hat{c}^*(r)>0$ such that the flows $\psi_{u_n}(t,\cdot)$  exist globally in time for all $0<c<\hat{c}^*$.
By preservation of mass and energy, and by \eq{1255}, we infer
\begin{equation}\label{eq3.28}
\|\psi_{u_n}(t,\cdot)\|^2_2 = \|u_n\|^2_2\to \|u_0\|^2_2 = c\ \mbox{ and }\  \ \mathcal{E}(\psi_{u_n}(t,\cdot)) = \mathcal{E}(u_n)\to \mathcal{E}(u_{0}) = m^r_c,\quad \forall t\ge 0.
\end{equation}

 We claim that \[\sharp\{n\in\N:\|\psi_{u_n}(t_n,\cdot)\|^2_{\dot{\Sigma}_{A,V}}> r\}<\infty.\]
If not,
$$
\|\psi_{u_{n}}(t_{n},\cdot)\|_{\dot{\Sigma}_{A,V}}> r\ \ \forall n\in\N,
$$
and $ \|\psi_{u_{n}}( 0,\cdot)\|_{\dot{\Sigma}_{A,V}}<r$ by \eq{1255}, up to a subsequence.
By continuity, for every $n\in\N$, there exists $t^*_{n}\in[0,t_{n})$ such that
$$
\|\psi_{u_{n_k}}(t^*_{n},\cdot)\|_{\dot{\Sigma}_{A,V}}=r.
$$
But by \eq{eq3.28} we can argue as in Lemma \ref{Lcomp}, and deduce the existence of $\tilde{u}_*\in \Sigma_{A,V}$ with $\|\tilde{u}_*\|^2_2 = c$ and $\mathcal{E}(\tilde{u}_*) = m^r_c$, such that $\psi_{u_{n_k}}(t^*_{n_k},\cdot)\to \tilde{u}_*$ strongly in $\Sigma_{A,V}$. 
Then $\tilde{u}_*\in {S_c}\cap \partial B(r)$ is a minimizer for $m^r_c$, contrary to \eq{ntang}.

Finally, by the claim above, we can assume that $(\psi_{u_n}(t_n,\cdot))_{n\in \N}\subset B(r)$. 
Arguing as above, we can prove that there exists a $u\in \mathcal{M}^r_c$ such that
$$
\lim\limits_{n\to\infty}\|\psi_{u_n}(t_n,\cdot) - u\|_{\dot{\Sigma}_{A,V}} = 0,
$$
contrary to \eq{ueq3.26}. 
This completes the proof.

\end{proof}

%%%%%%%%%%%%%%%%%%%%%%%%%%%%%%%%%%%%%%%%%%%%%%%%%%%%%%%%%%%%%%%%%%%%%%%%%%%%%%

\section{Uniqueness and symmetry properties of the ground state}
%\label{s4}

%%%%%%%%%%%%%%%%%%%%%%%%%%%%%%%%%%%%%%%%%%%%%%%%%%%%%%%%%%%%%%%%%%%%%%%%%%%%%%

In this section, we give the proof of Theorem \ref{th1.3} and Corollary \ref{abth1.5}. 
Initially, we employ the implicit function theorem to demonstrate Theorem \ref{th1.3}.
Next, based on the same argument and the proof of Theorem \ref{th1.1}, we give an outline of the proof of Corollary \ref{abth1.5} after converting \eq{eq1.1} into a non-magnetic field coupling problem when $|\a|\le 1/2$ (see Lemma \ref{able5.1} below).

\medskip

%\subsection{Proof of Theorem \ref{th1.3}}\label{ss4.1}
Here, $u_{{c},A,V}$  is an energy ground state for \eq{eq1.1}, according to Part 2 of Theorem \ref{th1.1}.
To start, we transfer the mass restriction to the nonlinear term.
Notice that the function $v_{c,A,V} = u_{c,A,V}/\sqrt{c}$ is an energy ground state for the problem
\begin{equation*}
\left\{
  \begin{array}{ll}
    (i\nabla + A(x))^2 u + V(x)u - c|u|^2u = \la_{c,A,V}u, & \text{in}\ \R^3 \\
    \|u\|^2_2 = 1. &
  \end{array}
\right.
\end{equation*}
Moreover, it achieves the following infimum
\begin{equation*}
%\label{geq4.1}
\breve{m}^r_c = \inf_{u\in S_1 \cap B(r/c)}\mathcal{E}_c(u),
\end{equation*}
where
$$
\mathcal{E}_c(u) = \frac{1}{2}\|u\|^2_{\dot{\Sigma}_{A,V}} - \frac{c}{4}\|u\|^4_4.
$$
Observe that $\breve{m}^r_c = m^r_c/c$.
For brevity, we denote by $\hat{\psi}_{1}$ the first eigenfunction and by $\hat{\la}_1$ the corresponding first eigenvalue of $\hat{L}_{A,V}$.

\textsf{Proof of part 1 of Theorem \ref{th1.3}: uniqueness.} 
Denote by $V_1 = \{\la\hat{\psi}_1\, :\, \la\in\C\}$ and $V_1^{\bot}$ the orthogonal space in $L^2(\R^3,\C)$ to $V_1$ with respect to the $L^2$ scalar product. 
Note that $V_1$ is the eigenspace related to $\hat \lambda_1$ because it is assumed to be simple and that $V_1^{\bot} =   \spant\{\hat{\psi}_{2,A,V},\hat{\psi}_{3,A,V},\ldots\}$ according to Lemma \ref{Ale2.8}. 
For each $\varphi\in V_1$ with $\|\varphi\|^2_2=1$, define the function $F^{\varphi}:\big[V_1^{\bot}\cap{\Sigma}_{A,V}\big] \times \R^3\to \Sigma^*_{A,V}$ as
\begin{align}\label{9999}
F^{\varphi}(u_1,u_2,\la,s,{c})
&= \big[(i\nabla + A(x))^2 + V(x)\big](u+(1+s)\varphi)\\
&\quad - {c}|u+(1+s)\varphi|^2(u+(1+s)\varphi) - (\hat{\la}_{1}+\la)(u+(1+s)\varphi), \nonumber
\end{align}
where $\Sigma^*_{A,V}$ is the dual space of ${\Sigma}_{A,V}$ and $u=u_1+iu_2\in V_1^{\bot}\cap{{\Sigma}_{A,V}}$ ($u_1$ and $u_2$ are the real and imaginary parts of $u$, respectively). 
We have
\begin{align*}
&(i)\ F^{\varphi}(0,0,0,0,0) = 0,\\
&(ii)\ F^{\varphi}_{u_1}(0,0,0,0,0) = \hat{L}_{A,V}  - \hat{\la}_{1},\\
&(iii)\ -i F^{\varphi}_{u_2}(0,0,0,0,0) = \hat{L}_{A,V} - \hat{\la}_{1},\\
&(iv)\ F^{\varphi}_\la (0,0,0,0,0)= -\varphi,\\
&(v)\ F^{\varphi}_s(0,0,0,0,0)=0.
\end{align*}
Furthermore, for every
\[(w,\hat{\la}) = (w_1+iw_2,\hat{\la})\in \big[V_1^{\bot}\cap{{\Sigma}_{A,V}}\big]\times \R,\]
it holds
\begin{align*}
&\quad(F^{\varphi}_{u_1}(0,0,0,0,0), F^{\varphi}_{u_2}(0,0,0,0,0), F^{\varphi}_\la (0,0,0,0,0))(w_1,w_2,\hat{\la})\\
& = (\hat{L}_{A,V}-\hat{\la}_{1})w_1 + i(\hat{L}_{A,V}-\hat{\la}_{1})w_2 - \hat{\la}\varphi = (\hat{L}_{A,V}-\hat{\la}_{1})w - \hat{\la}\varphi.
\end{align*}
Hence, by the simplicity of $\hat{\la}_1$, the map
\[F^{\varphi}_{u_1,u_2,\la}(0,0,0,0,0):\big[V_1^{\bot}\cap{\Sigma}_{A,V}\big]\times \R\to \Sigma^*_{A,V}\]
is an isomorphism. 
Therefore, by the implicit function theorem (\cite[Theorems 1.2.1 and 1.2.3]{KC-Zhang-2006}), there exist $\de_{1,\varphi}>0$, $\de_{2,\varphi}>0$ and a unique function  
\[(
u_1(s,{c}),u_2(s,{c}),\la(s,{c}))\in C^1(B_{\de_{1,\varphi}}(0,0);B_{\de_{2,\varphi}}(0,0))
\]
such that
\begin{equation}\label{uueq4.2}
\left\{
  \begin{array}{ll}
    F^{\varphi}(u_1(s,{c}),u_2(s,{c}),\la(s,{c}),s,{c}) = 0, & \\
    u_1(0,0) = 0,u_2(0,0)=0,\ \la(0,0) = 0&
  \end{array}
\right.
\end{equation}
and
\begin{equation}\label{ueq4.12}
\left[F^{\varphi}_{u_1}\frac{\partial u_1}{\partial s}(0,0) +F^{\varphi}_{u_2}\frac{\partial u_2}{\partial s}(0,0) + F^{\varphi}_{\la}\frac{\partial \la}{\partial s}(0,0)\right]_{(0,0,0,0,0)} = 0.
\end{equation}
Note that \eq{ueq4.12} is equivalent to
\begin{equation*}
%\label{ueq4.13}
(\hat L_{A,V}-\hat{\la}_{1})\frac{\partial}{\partial s} u(0,0) - \left(\frac{\partial \la}{\partial s}(0,0)\right)\varphi = 0
\end{equation*}
by the computation above. 
Then
\begin{equation}\label{uueq4.14}
\frac{\partial}{\partial s} u(0,0)=0,\quad \frac{\partial \la}{\partial s}(0,0)=0
\end{equation}
by the simplicity of $\hat{\la}_1$.

Now let
$$
\hat{u}(s,{c}) = (1+s)\varphi+u(s,{c}),\quad (s,{c})\in B_{\de_1}(0,0),
$$
and define 
$$
f(s,{c}) = |\hat{u}(s,{c})|^2_2 = (1+s)^2 + \int_{\R^3}|u(s,{c})|^2,\quad (s,{c})\in B_{\de_1}(0,0).
$$
By \eq{uueq4.2} and \eq{uueq4.14}, we have
$$
f(0,0)=1\ \text{and}\ f_s(0,0) = 2 + 2\Re\int_{\R^3}u_s(0,0)\overline u(0,0) = 2.
$$
Then, by applying the implicit function theorem again, there exist $\de_{\varphi} \in(0,\de_{1,\varphi})$ and a unique $s=s({c})\in C^1(B_{\de_{\varphi}}(0);B_{ \de_{1,\varphi}}(0))$ such that
$$
f(s({c}),{c}) = f(0,0) = 1,\quad {c}\in B_{\de_{\varphi}}(0).
$$
This and \eq{uueq4.2} show that, for ${c}\in B_{\de_{\varphi}}(0)$, there exists a unique function:
$$
(u({c}):=u(s({c}),{c}),\la({c}):= \hat{\la}_{0}+\la(s({c}),{c}))\in C^1(B_{\de_{\bl \varphi}}(0);B_{\de_{2,\varphi}}(\varphi,\hat{\la}_{0}))
$$
such that
\begin{equation}\label{ueq4.13}
  \left\{
    \begin{array}{ll}
      \la(0) = \hat{\la}_{1},\ u(0)=\varphi, &   \\
      (i\nabla + A(x))^2 u({c}) + V(x)u({c}) - {c} |u({c})|^2u({c}) -\la({c})u({c}) = 0, & \\
      |u({c})|^2_2=1.&
    \end{array}
  \right.
\end{equation}

Now suppose, contrary to our statement, that for every $c>0$ there exist two minimizers $v^j_{c,A,V}$, $j=1,2$, for $\breve{m}^r_c$, which are different up to a phase factor, i.e. there exists no $\theta\in\R$ such that $v^1_{c,A,V} = e^{i\theta} v^2_{c,A,V}$. 
Taking into account that $\hat\psi_1$ is unique, it can be easily checked that there exist two constants $\theta_j\in\R$, $j=1,2$, such that
$$
v^j_{c,A,V}\to e^{-i\theta_j}\hat{\psi}_1\quad \text{strongly in}\ \Sigma_{A,V},\ j=1,2,
$$
as $c\to 0^+$, up to subsequences. 
Consequently,
$$
e^{i\theta_j}v^j_{c,A,V}\to \hat{\psi}_1\quad \text{strongly in}\ \Sigma_{A,V},\ j=1,2,
$$
as $c\to 0^+$.  
Then when $|c|>0$ is small, by \eq{ueq4.13}, we have
\[
e^{i\theta_j}v^{j}_{c,A,V}\in B_{\de_{\hat{\psi}_1}},\  j=1,2, \  \text{ and }\  e^{i\theta_1}v^1_{c,A,V} = e^{i\theta_2}v^2_{c,A,V},
\]
which is a contradiction. 
This completes the proof of part 1.
\qed

\medskip

\textsf{Proof of part 2 of Theorem \ref{th1.3}: symmetry.}\quad 
Denote the cylindrical coordinates as $(x_1,x_2,$ $x_3)=(r\cos\theta,r\sin\theta,x_3)$, where $r=\sqrt{x^2_1+x^2_2}$.
 Recall that the magnetic field $A$ satisfies $A(gx) = g(A(x))$ for all $g\in G$, $x\in\R^3$, and notice that $(x_1,x_2,$ $x_3)=g^+_\theta(r,0,x_3)=g^-_\theta(r,0,-x_3)$, $\forall\theta \in [0,2\pi)$,  where $g^\pm_{\theta}$ are defined in \eq{ueq1.13}.
Then, we must have 
$$
A(x_1,x_2,x_3) = g^+_{\theta}\left(
                \begin{array}{c}
                  A_1(r,0,x_3) \\
                  A_2(r,0,x_3) \\
                  A_3(r,0,x_3) \\
                \end{array}
              \right)
              =
              g^-_{\theta}\left(
                \begin{array}{c}
                  A_1(r,0,-x_3) \\
                  A_2(r,0,-x_3) \\
                  A_3(r,0,-x_3) \\
                \end{array}
              \right),
$$
that is $A_3$ is odd and $A_1, A_2$ are even, in the $x_3$ variable. 
As a consequence, since $A$ is assumed to be linear and skew-symmetric, a simple calculation shows that $A_1(r,0,x_3)= A_3(r,0,x_3)=0$ and $A_2(r,0,|x_3|)= h r$, for a constant $h\in\R$, 
that is $A$ has the form
\begin{equation}
\label{ueq1.12}
 \left(
      \begin{array}{ccc}
        0 & -h & 0 \\
        h & 0 & 0\\
        0 & 0 & 0 \\
      \end{array}
    \right)
\end{equation}
%Since the potential $V$ is assumed to satisfy $V(gx) = V(x)$ for all $g\in G$ and $x\in\R^3$, $V$ is cylindrically symmetric with respect to the two variables $x_1$ and $x_2$, i.e.,  $V = V(r,x_3)$.
(see also \cite{Bonheure-Cingolani-Nys-CV-2016}).

For every $g\in G$, it can be checked by the assumption on $V$ and $A$ that $\mathcal{E}(v_{c,A,V}(g\, \cdot)) = \mathcal{E}(v_{c,A,V})$. 
Hence $v_{c,A,V}(g\, \cdot)$  also achieves \[\breve{m}^r_c = \inf_{u\in S_1 \cap B(r/c)}\mathcal{E}_c(u),\]
i.e., $v_{c,A,V}(g\, \cdot)$ is also a minimizer of $\breve{m}^r_c$.
By the uniqueness in part 1, for every $g\in G$, there exists a constant $\theta_g \in [0,2\pi)$ such that
\begin{equation*}
%\label{ueq4.4}
v_{c,A,V} = e^{i\theta_g}v_{c,A,V}(g\, \cdot).
\end{equation*}
By the exponential decay of the solutions (Remark \ref{Rreg}), we can compute
\begin{align*}
\int_{\R^3}x|v_{c,A,V}(x)|^2 = \int_{\R^3}x|v_{c,A,V}(gx)|^2 = g^{-1}\int_{\R^3}x|v_{c,A,V}(x)|^2 dx
\end{align*}
for all $g\in G$. Consequently, it holds
$$
\int_{\R^3}x|v_{c,A,V}(x)|^2 = 0
$$
and then
\begin{equation}\label{eq5-6}
\Re(i(A(x)\cdot \nabla v_{c,A,V})\, {\bar v_{c,A,V} }) = 0
\end{equation}
by \cite[Proposition 5.1, Lemma 5.2]{Bonheure-Nys-Schaftingen-JMPA-2019}. 

\medskip 

 {\em Claim:} \begin{align}\label{eq5-6-1}
\begin{split}
 m^r_c &= \inf_{{u\in\Sigma_{A,V},\|u\|^2_2 = c, \|u\|^2_{\dot{\Sigma}_{A,V}}\le r}}\mathcal{E}(u)
= \inf_{{u\in{\Sigma}_{A,V},\|u\|^2_2 = c,\|u\|^2_{\dot{\Sigma}_{A,V}}\le r}}J(u)
\\
&
= \inf_{{u\in\hat{\Sigma}_{A,V},\|u\|^2_2 = c,\|u\|^2_{\ddot{\Sigma}_{A,V}}\le r}}J(u)
= \inf_{{u\in\hat{\Sigma}_{A,V},\|u\|^2_2 = c, \|u\|^2_{\ddot{\Sigma}_{A,V}}\le r}}J(|u|)\\
&
= \inf_{{u\in\hat{\Sigma}^\Re_{A,V},\|u\|^2_2 = c,\|u\|^2_{\ddot{\Sigma}_{A,V}}\le r}}J(|u|)
= \inf_{{u\in\hat{\Sigma}^\Re_{A,V},\|u\|^2_2  = c,
\|u\|^2_{\ddot{\Sigma}_{A,V}}\le r}}J(u),
\end{split}
\end{align}
(see Notations \ref{SNot}),
where 
\begin{equation}\label{eq5-7-3}
J(u) := \frac{1}{2}\Big(\int_{\R^3}|\nabla u|^2 + \int_{\R^3}\big(|A(x)|^2 + V(r,x_3)\big)|u|^2\Big) - \frac{1}{4}\int_{\R^3}|u|^4.
\end{equation}
%For a function $u$ in ${\Sigma}_{A,V}$, we write its Fourier decomposition in cylindrical coordinates as
%$$
%u(r,\theta,x_3)=\sum_{n\in \Z}c_n(r,x_3)e^{in\theta}\ \ \forall r,x_3>0,\ \theta\in[0,2\pi),
%$$
%where
%\begin{equation*}
%%\label{ueq4.9}
%c_n(r,x_3) = \frac{1}{2\pi}\int^{2\pi}_0u(r,\theta,x_3)e^{-in\theta}d\theta\ \ \forall n\in \Z.
%\end{equation*}
%According to the proof of part 2 in Proposition \ref{leA.1}\todogr{Mettere qui}, we have
%\begin{align*}
%\begin{split}
%&\quad\int_{\R^3}\left|\left(i\frac{\partial_\theta}{r} + \frac{\a}{r}\right)u\right|^2dx\\
%& = \int^{\infty}_{-\infty}dx_3\int^{\infty}_0dr\int^{2\pi}_0\sum_{n\in\Z}|c_n(r,z)|^2\Big(\frac{-n}{r}+\frac{\a}{r}\Big)^2rd{\theta}\\
%&\ge \int^{\infty}_{-\infty}dx_3\int^{\infty}_0dr\int^{2\pi}_0\sum_{n\in Z}\left|c_n(r,x_3)\frac{\a}{r}\right|^2rd\theta\\
%& = \int_{\R^3}|\a(-x_2/r^2,x_1/r^2,0)u|^2dx
%\end{split}
%\end{align*}
%if $|\a|\le\frac{1}{2}$.
%Hence
%\begin{align*}
%%\label{eqA.11}
%\begin{split}
%&\quad\int_{\R^3}\big(|(i\nabla + A(x))u|^2 + V(x)|u|^2\big)dx\\
%&\ge \int_{\R^3}|\nabla_{r,x_3} u|^2dx + \int_{\R^3}\big(|\a(-x_2/r^2,x_1/r^2,0)|^2 + V(r,x_3)\big)|u|^2dx
%\end{split}
%\end{align*}

\medskip

In fact, observe first that, by \eq{eq5-6}, it holds
$$
\mathcal{E}(u_{c,A,V}) = J(u_{c,A,V}).
$$
Then, combing with the diamagnetic inequality in Lemma \ref{lle2.2}, we have
\begin{align}\label{abeq5.4}
\begin{split}
&\quad\inf_{u\in\Sigma_{A,V},\|u\|^2_2 = c,\|u\|^2_{\dot{\Sigma}_{A,V}}\le r}\mathcal{E}(u)\ge \inf_{u\in{\Sigma}_{A,V},\|u\|^2_2 = c,\|u\|^2_{\dot{\Sigma}_{A,V}}\le r}J(u)\\
&\ge \inf_{u\in\hat{\Sigma}_{A,V},\|u\|^2_2 = c,\|u\|^2_{\ddot{\Sigma}_{A,V}}\le r}J(u)\ge \inf_{u\in\hat{\Sigma}_{A,V},\|u\|^2_2 = c,\|u\|^2_{\ddot{\Sigma}_{A,V}}\le r}J(|u|)\\
&\ge \inf_{u\in\hat{\Sigma}^\Re_{A,V},\|u\|^2_2 = c,\|u\|^2_{\ddot{\Sigma}_{A,V}}\le r}J(|u|) =  \inf_{u\in\hat{\Sigma}^\Re_{A,V},\|u\|^2_2 = c,\|u\|^2_{\ddot{\Sigma}_{A,V}}\le r}J(u);
\end{split}
\end{align}
here, the last equality holds because
$$
J(u) = J(|u|)\ \text{and}\ \||u|\|^2_{\ddot{\Sigma}_{A,V}}=  \|u\|^2_{\ddot{\Sigma}_{A,V}},\qquad \forall u\in\hat{\Sigma}^\Re_{A,V}. 
$$

Finally, $J(|u|) = \mathcal{E}(|u|)$, for all $u\in\hat{\Sigma}^\Re_{A,V}$, so that 
\begin{equation}
\label{eq5-6-2}
\inf_{u\in\hat{\Sigma}^\Re_{A,V},\|u\|^2_2 = c,\|u\|^2_{\ddot{\Sigma}_{A,V}}\le r}J(u) \ge \inf_{u\in\Sigma_{A,V},\|u\|^2_2 = c,\|u\|^2_{\dot{\Sigma}_{A,V}}\le r}\mathcal{E}(u)
\end{equation}
and the identities in \eqref{eq5-6-1} hold. 
This proves the claim.

\medskip
%Let $(u_{c,A,V},\la_{c,A,V})$ be a solution obtained by Theorem \ref{th1.1}. 
By the claim, $|u_{c,A,V}|$ achieves the infimum
\begin{equation}\label{abeq4.11}
\inf_{u\in\hat{\Sigma}^\Re_{A,V},\|u\|^2_2 = c,\|u\|^2_{\ddot{\Sigma}_{A,V}}\le r}J(u),
\end{equation}
and it must be true that $|\nabla u_{c,A,V}| = |\nabla |u_{c,A,V}||$.
Hence, if we write $u_{c,A,V}=:u=u_1 + iu_2$, with $u_1,u_2$ real-valued functions, then a straight computation provides $u_1\nabla u_2 = u_2\nabla u_1$. 
Then $\nabla u = \sgn({\bar u}) \nabla|u|$ a.e. in $\R^3$ and
\begin{equation}\label{abeq4.16}
{ \nabla \sgn(\bar u) = \frac{\nabla u-\sgn{\bar u}\nabla |u|}{|u|} = 0}.
\end{equation}
Consequently, $\sgn({ \bar u})\equiv z\in\C$ with $|z| = 1$ a.e. in $\R^3$. 
This means that $u_{c,A,V}$ is nonnegative up to a constant phase, i.e., $u_{c,A,V} = |u_{c,A,V}|e^{i\theta}$ for some $\theta\in\R$. 

\smallskip

Now, one can proceed in a standard way by Schwartz symmetrization (see for example Bellazzini et al. in \cite{Bellazzni-et.al.-CMP-2017}) and conclude that $|u_{c,A, V}|$ is cylindrically symmetric around $(x_1,x_2)$. 

\smallskip

Moreover, $|u_{c,A,V}|$ satisfies the Schr\"odinger equation 
\begin{equation*}
%\label{eq5-7-1}
\left\{
  \begin{array}{ll}
    -\Delta u + (V(x) + |A|^2-\la)u = |u|^2u, &\ \text{in}\ \R^3  \\
    \ds\int_{\R^3}|u|^2=c &
  \end{array}
\right.
\end{equation*}
with a $C^1(\R^3\setminus\{0\})$ potential. 
Then, by standard regularity results $|u_{c,A,V}|\in C^{2}(\R^3\backslash\{0\})$, is a classical solution (see also Remark \ref{Rreg}). 
Applying Harnack's inequality, we conclude that it is strictly positive on $\R^3\backslash\{0\}$.
\qed

\bigskip

Before proving Corollary \ref{abth1.5}, we convert the minimizing problem considered before into a new problem without the magnetic coupling effect.

\begin{lemma}\label{able5.1}
Let $|{\hat{\a}}|\le \frac{1}{2}$. Then there hold
\begin{align}\label{eq5-9-1}
\begin{split}
&\quad\inf_{{u\in\Sigma_{\mathcal{A}_{\hat{\a}},V},\|u\|^2_2 = c, \|u\|^2_{\dot{\Sigma}_{\mathcal{A}_{\hat{\a}},V}}\le r}}\mathcal{E}(u)
= \inf_{{u\in{\Sigma}_{\mathcal{A}_{\hat{\a}},V},\|u\|^2_2 = c,\|u\|^2_{\dot{\Sigma}_{\mathcal{A}_{\hat{\a}},V}}\le r}}J(u)
\\
&
= \inf_{{u\in\hat{\Sigma}_{\mathcal{A}_{\hat{\a}},V},\|u\|^2_2 = c,\|u\|^2_{\ddot{\Sigma}_{\mathcal{A}_{\hat{\a}},V}}\le r}}J(u)
= \inf_{{u\in\hat{\Sigma}_{\mathcal{A}_{\hat{\a}},V},\|u\|^2_2 = c, \|u\|^2_{\ddot{\Sigma}_{\mathcal{A}_{\hat{\a}},V}}\le r}}J(|u|)\\
&
= \inf_{{u\in\hat{\Sigma}^\Re_{\mathcal{A}_{\hat{\a}},V},\|u\|^2_2 = c,\|u\|^2_{\ddot{\Sigma}_{\mathcal{A}_{\hat{\a}},V}}\le r}}J(|u|)
= \inf_{{u\in\hat{\Sigma}^\Re_{\mathcal{A}_{\hat{\a}},V},\|u\|^2_2  = c,
\|u\|^2_{\ddot{\Sigma}_{\mathcal{A}_{\hat{\a}},V}}\le r}}J(u),
\end{split}
\end{align}
where 
$
J(u)
$ is defined as in \eq{eq5-7-3}.
\end{lemma}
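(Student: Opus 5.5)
The plan is to reproduce the chain of inequalities \eqref{abeq5.4}--\eqref{eq5-6-2} from the proof of part 2 of Theorem \ref{th1.3}, with $A=\mathcal{A}_{\hat\a}$. We cannot use the identity \eqref{eq5-6} there, since it relied on the linearity of $A$ and on uniqueness. Instead we use a pointwise-in-energy inequality coming from the cylindrical Fourier decomposition. The key estimate is
\[
\int_{\R^3}|(i\nabla + \mathcal{A}_{\hat\a})u|^2 \ \ge\ \int_{\R^3}|\nabla |u||^2 + \int_{\R^3}|\mathcal{A}_{\hat\a}|^2|u|^2 ,\qquad \forall u\in \Sigma_{\mathcal{A}_{\hat\a},V},\quad |\hat\a|\le \tfrac12 .
\]
This gives $\mathcal{E}(u)\ge J(|u|)$ and $\||u|\|^2_{\ddot\Sigma}\le \|u\|^2_{\dot\Sigma}$. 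The second inequality ensures that the constraint $\|\cdot\|^2\le r$ is preserved when passing from $u$ to $|u|$.

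To prove the estimate, write $x=(\rho\cos\theta,\rho\sin\theta,x_3)$. Since $\mathcal{A}_{\hat\a}$ is tangential with $|\mathcal{A}_{\hat\a}|=\hat\a\rho/|x|^2$, we get
\[
|(i\nabla+\mathcal{A}_{\hat\a})u|^2=|\partial_\rho u|^2+|\partial_{x_3}u|^2+\rho^{-2}|(i\partial_\theta+\hat\a\rho^2|x|^{-2})u|^2 .
\]
The $\rho$-dependence of the angular coefficient is harmless, because for fixed $(\rho,x_3)$ it is a constant $a:=\hat\a\rho^2/|x|^2$ with $|a|\le|\hat\a|\le 1/2$. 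Expanding $u=\sum_n c_n(\rho,x_3)e^{in\theta}$ and using Parseval in $\theta$, the angular term equals $\sum_n(a-n)^2|c_n|^2\rho^{-2}$. The elementary inequality $(a-n)^2\ge a^2$ for $n\in\Z$, $|a|\le1/2$, gives the lower bound $\rho^{-2}a^2\sum|c_n|^2=|\mathcal{A}_{\hat\a}|^2|u|^2$. For the radial and axial parts, $|\partial_\rho u|^2+|\partial_{x_3}u|^2\ge|\partial_\rho|u||^2+|\partial_{x_3}|u||^2$ (the diamagnetic inequality, Lemma \ref{lle2.2}). It remains to bound $|\partial_\theta|u||^2/\rho^2$, which must be absorbed together with the $|\mathcal{A}|^2|u|^2$ term.

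I expect this last point to be the main obstacle. The cleanest route is to prove, for each fixed $(\rho,x_3)$, the one-dimensional inequality on the circle
\[
\int_0^{2\pi}|(i\partial_\theta+a)w|^2\,d\theta\ \ge\ \int_0^{2\pi}\big(|\partial_\theta|w||^2+a^2|w|^2\big)\,d\theta ,\qquad |a|\le \tfrac12 .
\]
If this fails in general, fall back on the weaker statement actually needed for the infima: for the minimization it suffices to compare with real nonnegative (hence, after symmetrization, $\theta$-independent) competitors. For such $w$ the two sides coincide. Together with $\inf_n(a-n)^2=a^2$, attained at $n=0$, this shows that restricting to cylindrically symmetric real functions does not raise the infimum.

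With the key estimate in hand, the chain is as follows. On real functions $\mathcal{A}_{\hat\a}\cdot\nabla u$ contributes nothing, so $\mathcal{E}(u)=J(u)$ and $\|u\|_{\dot\Sigma}=\|u\|_{\ddot\Sigma}$. Hence the rightmost infimum in \eqref{eq5-9-1} is $\ge$ the leftmost one, exactly as in \eqref{eq5-6-2}. Conversely, the estimate gives $\mathcal{E}(u)\ge J(|u|)$ with $|u|$ admissible, so each intermediate infimum is squeezed between the two ends. The equalities $J(u)=J(|u|)$ and $\||u|\|_{\ddot\Sigma}=\|u\|_{\ddot\Sigma}$ on real functions close the chain.
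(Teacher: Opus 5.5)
Your proposal has a genuine gap. The key estimate
\[
\int_{\R^3}|(i\nabla+\mathcal{A}_{\hat\a})u|^2\ \ge\ \int_{\R^3}|\nabla|u||^2+\int_{\R^3}|\mathcal{A}_{\hat\a}|^2|u|^2
\]
is false whenever $\hat\a\neq0$. So is the circle inequality you hope to derive it from.

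On the circle, take $w=f(\theta)e^{ia\theta}$ with $0\le f\in C^\infty_c((0,2\pi))$, $f\not\equiv0$, $a\neq0$. Then $(i\partial_\theta+a)w=if'e^{ia\theta}$, so the left side is $\int f'^2$ while the right side is $\int f'^2+a^2\int f^2$.

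In $\R^3$ the same mechanism (gauging $\mathcal{A}_{\hat\a}$ away locally) works with $u=f\,e^{i\mathcal{A}_{\hat\a}(x_0)\cdot x}$, where $0\le f$ is supported in a ball around a point $x_0$ off the $x_3$-axis. For a small enough ball,
\[
\int|(i\nabla+\mathcal{A}_{\hat\a})u|^2=\int|\nabla f|^2+\int f^2|\mathcal{A}_{\hat\a}(x)-\mathcal{A}_{\hat\a}(x_0)|^2<\int|\nabla f|^2+\int|\mathcal{A}_{\hat\a}|^2f^2 .
\]
Hence both consequences you use fail: $\mathcal{E}(u)\ge J(|u|)$, and $\||u|\|_{\ddot\Sigma}\le\|u\|_{\dot\Sigma}$ (the latter is what keeps $|u|$ admissible). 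The squeeze therefore collapses.

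What the Fourier argument actually gives is the paper's \eqref{P29}. The bound $(a-n)^2\ge a^2$ spends the whole angular term $\rho^{-2}|(i\partial_\theta+a)u|^2$ on producing $|\mathcal{A}_{\hat\a}|^2|u|^2$. Nothing is left for $\rho^{-2}|\partial_\theta|u||^2$, and only $\int|\nabla_{r,x_3}u|^2$ survives on the right.

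Your fallback does not close this. That the two sides agree on real $\theta$-independent functions, and that $\min_n(a-n)^2=a^2$ is attained at $n=0$, only gives the trivial direction (rightmost infimum $\ge$ leftmost). What must be shown is that for an arbitrary complex admissible $u$ there is a real admissible $w$ with $J(w)\le\mathcal{E}(u)$.

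Mode by mode the quadratic part is fine, but merging the modes into one $\theta$-independent function works against the quartic term. For $w=\big(\frac1{2\pi}\int_0^{2\pi}|u|^2d\theta\big)^{1/2}$ you get $\|w\|^2_{\ddot\Sigma}\le\|u\|^2_{\dot\Sigma}$ from \eqref{P29} (if $V$ is cylindrically symmetric). Yet Jensen gives $\int w^4\le\int|u|^4$, which is the wrong direction. Symmetrization needs a real starting function whose energy is already below $\mathcal{E}(u)$, which is the missing step itself. It also needs monotonicity of $V+|\mathcal{A}_{\hat\a}|^2$ in $\rho$, which the lemma does not assume.

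The paper itself records only \eqref{P29} and passes directly to \eqref{908}. Since \eqref{P29} only yields $\mathcal{E}(u)\ge J(u)-\frac12\int\rho^{-2}|\partial_\theta u|^2$, removing the $\theta$-dependence is exactly the step that must be supplied on either route.

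That step is immediate for a single angular mode $u=g(\rho,x_3)e^{in\theta}$. Then $|u|=|g|$ is $\theta$-independent, $\int|u|^4$ is unchanged, and $\mathcal{E}(u)\ge J(|g|)$ with $|g|$ admissible. By rotational equivariance this covers a minimizer that is unique up to a phase, but not a general competitor.
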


\begin{proof}
 We proceed as for \eq{eq5-6-1}.
In particular, to confirm the validity of equations \eq{abeq5.4}--\eq{eq5-6-2}, we have only to prove that the special form of $\mathcal{A}_{\hat{\a}}$ leads to:
\begin{align}
\label{908}
\begin{split}
&\inf_{u\in\Sigma_{\mathcal{A}_{\hat{\a}},V},\|u\|^2_2 = c,\|u\|^2_{\dot{\Sigma}_{\mathcal{A}_{\hat{\a}},V}}\le r}\mathcal{E}(u)\ge \inf_{u\in{\Sigma}_{\mathcal{A}_{\hat{\a}},V},\|u\|^2_2 = c,\|u\|^2_{\dot{\Sigma}_{\mathcal{A}_{\hat{\a}},V}}\le r}J(u).
%\\
%&\ge \inf_{u\in\hat{\Sigma}_{\mathcal{A}_{\hat{\a}},V},\|u\|^2_2 = c,\|u\|^2_{\ddot{\Sigma}_{\mathcal{A}_{\hat{\a}},V}}\le r}J(u)\ge \inf_{u\in\hat{\Sigma}_{\mathcal{A}_{\hat{\a}},V},\|u\|^2_2 = c,\|u\|^2_{\ddot{\Sigma}_{\mathcal{A}_{\hat{\a}},V}}\le r}J(|u|)\\
%&\ge \inf_{u\in\hat{\Sigma}^\Re_{\mathcal{A}_{\hat{\a}},V},\|u\|^2_2 = c,\|u\|^2_{\ddot{\Sigma}_{\mathcal{A}_{\hat{\a}},V}}\le r}J(|u|){\bl = } \inf_{u\in\hat{\Sigma}^\Re_{\mathcal{A}_{\hat{\a}},V},\|u\|^2_2 = c,\|u\|^2_{\ddot{\Sigma}_{\mathcal{A}_{\hat{\a}},V}}\le r}J(u),
\end{split}
\end{align}
that is  the first inequality of \eq{abeq5.4}.

For a function $u$ in ${\Sigma}_{\mathcal{A}_{\hat{\a}},V}$, we write its Fourier decomposition in cylindrical coordinates as
\beq\label{1150}
u(r,\theta,x_3)=\sum_{n\in \Z}c_n(r,x_3)e^{in\theta}\ \ \forall r,x_3>0,\ \theta\in[0,2\pi),
\eeq
where
\begin{equation*}
%\label{ueq4.9}
c_n(r,x_3) = \frac{1}{2\pi}\int^{2\pi}_0u(r,\theta,x_3)e^{-in\theta}d\theta\ \ \forall n\in \Z.
\end{equation*}

Note that
\begin{align*}
\nabla u &= \left(\frac{x_1}{r}\partial_ru,\frac{x_2}{r}\partial_ru,\partial_{x_3}u\right) + \big(\partial_{\theta}u \partial_{x_1}\theta , \partial_{\theta}u \partial_{x_2}\theta,0\big).
\end{align*}
Hence
\begin{align*}
\begin{split}
(\mathcal{A}_{\hat{\a}}(x)\cdot \nabla u)
&= {\hat{\a}}\left(\frac{-x_2}{|x|^2},\frac{x_1}{|x|^2},0\right)\cdot \nabla u\\
&= {\hat{\a}}\left(\frac{-x_2}{|x|^2},\frac{x_1}{|x|^2},0\right)\cdot \big(\partial_{\theta}u \partial_{x_1}\theta, \partial_{\theta}u \partial_{x_2}\theta,0\big)\\
& = \frac{{\hat{\a}}\partial_{\theta} u}{|x|^2}
\end{split}
\end{align*}
and
\begin{align*}
\begin{split}
|(i\nabla + \mathcal{A}_{\hat{\a}}(x))u|^2
& = |\nabla u|^2 + |\mathcal{A}_{\hat{\a}}(x)|^2|u|^2 + 2\Re i(\mathcal{A}_{\hat{\a}}(x)\cdot \nabla u )\, \bar{u}\\
& = |\nabla u|^2 + |\mathcal{A}_{\hat{\a}}(x)|^2|u|^2 + 2\Re i\frac{{\hat{\a}}\partial_{\theta} u}{|x|^2}\, \bar{u}.
%|\nabla u|^2 = |\nabla_{r,x_3}u|^2 + \frac{| \partial_{\theta}  u|^2}{r^2}
\end{split}
\end{align*}

Then, we obtain
\begin{align}\label{eqA.9bis}
\int_{\R^3}|(i\nabla + \mathcal{A}_{\hat{\a}}(x))u|^2dx
&= \int_{\R^3}|{ \nabla_{r,x_3}}  u|^2dx + \int_{\R^3}\Big(\frac{|\hat{\a}|^2r^2|u|^2}{|x|^4} + \frac{1}{r^2}|\partial_\theta u|^2+\frac{2\hat{\a}}{|x|^2}\Re i{\bar{u}\partial_\theta u}\Big) \nonumber \\
&= \int_{\R^3}| \nabla_{r,x_3}  u|^2dx + \int_{\R^3}\Big|\Big(\frac{\hat{\a}r}{|x|^2} + i\frac{\partial_\theta}{r}\Big)u\Big|^2dx
\end{align}
By the Fourier decomposition \eq{1150}, when $|\hat{\a}|\le \frac{1}{2}$, we have 
\begin{align}
\label{1243}
\begin{split}
&\quad\int_{\R^3}\left|\left(\frac{{\hat{\a}}r}{|x|^2}+ i\frac{\partial_\theta}{r} \right)u\right|^2dx\\
& = \int^{\infty}_{-\infty}dx_3\int^{\infty}_0dr\int^{2\pi}_0\sum_{n\in\Z}|c_n(r,x_3)|^2\Big(\frac{{\hat{\a}}r}{r^2 + x^2_3}-\frac{n}{r}\Big)^2r\,d{\theta}\\
&\ge \int^{\infty}_{-\infty}dx_3\int^{\infty}_0dr\int^{2\pi}_0\sum_{n\in \Z}\left|c_n(r,x_3)\frac{{\hat{\a}}r}{r^2 + x^2_3}\right|^2r\, d\theta\\
& = \int_{\R^3}\frac{|\hat{\a}|^2r^2}{(r^2+x^2_3)^2}{|u|^2}dx,
\end{split}
\end{align}
where we have used the fact that $\Big|\frac{\hat{\a}r}{r^2+x^2_3}-\frac{n}{r}\Big|\ge \Big|\frac{\hat{\a}r}{r^2+x^2_3}\Big|$ if $|\hat{\a}|\le \frac{1}{2}$.

By \eq{eqA.9bis} and \eq{1243} we obtain
\begin{align}
\label{P29}
%\label{eqA.11}
%\begin{split}
%&
\int_{\R^3}\big(|(i\nabla + \mathcal{A}_{\hat{\a}}(x))u|^2 + V(x)|u|^2\big)dx
%\\
%&
\ge \int_{\R^3}|{ \nabla_{r,x_3}} u|^2dx + \int_{\R^3}\Big(\frac{|\hat{\a}|^2r^2}{|x|^4} + V(r,x_3)\Big)|u|^2dx,
%\end{split}
\end{align}
 which implies \eq{908} and concludes the proof.

\end{proof}

 \begin{proof}[\textsf{Proof of Corollary \ref{abth1.5}}]   
 First note that the same minimizing procedure in Theorem \ref{th1.1} implies the infimum 
\begin{equation*}
%\label{eq5-9-2}
\inf_{{u\in\hat{\Sigma}^\Re_{\mathcal{A}_{\hat{\a}},V},\|u\|^2_2  = c,
\|u\|^2_{\ddot{\Sigma}_{\mathcal{A}_{\hat{\a}},V}}\le r}}J(u)
\end{equation*}
can be achieved and its minimizer is a solution to \eq{abeq1.5} when $0<c<c_{0,V_{\hat{\a}}}(r)$. Let $\tilde{u}_{c,0,V_{\hat{\a}}}$ be a corresponding minimizer. By the standard symmetrization, regularity procedure and Harnack's inequality mentioned after \eq{abeq4.16}, we know that $\tilde{u}_{c,0,V_{\hat{\a}}}$ belongs to $C^2(\R^3\backslash\{0\})$ and is strictly positive and cylindrically symmetric w.r.t. $(x_1,x_2)$. Especially, in turn, by Lemma \ref{able5.1}, $\tilde{u}_{c,0,V_{\hat{\a}}}$ also achieves the infimum \eq{abeq1.2}, which gives the existence of minimizer.

\smallskip

 On the other hand, if $u$ achieves \eq{abeq1.2}, then Lemma \ref{able5.1} implies that $|u|$ must achieves all the infimum exhibited in \eq{eq5-9-1}. Especially, by \eq{abeq4.16}, it holds 
$u = e^{i\theta}|u|$ for some $\theta\in \R$. The symmetry property of $u$ is also similar to that in Theorem \ref{th1.3}. 

\smallskip

 In Proposition \ref{P2} of the coming Section \ref{S5}, we see that the first eigenvalue of the linear part is simple.  Then, applying point 1 of Theorem \ref{th1.3}, we establish uniqueness.
%Hence, if we write $u_{c,\mathcal{A}_{\hat{\a}},V}=:u=u_1 + iu_2$, with $u_1,u_2$ real-valued functions, then a straight computation provides $u_1\nabla u_2 = u_2\nabla u_1$. 
%Then $\nabla u = \sgn({ \bar u}) \nabla|u|$ a.e. in $\R^N$ and
%\begin{equation}\label{abeq4.16}
%{ \nabla \sgn(\bar u) = \frac{\nabla u-\sgn{\bar u}\nabla |u|}{|u|} = 0}.
%\end{equation}
%Consequently, $\sgn({ \bar u})\equiv z\in\C$ with $|z| = 1$ a.e. in $\R^N$. 
%This means that $u_{c,\mathcal{A}_{\hat{\a}},V}$ is nonnegative up to a constant phase, i.e., $u_{c,\mathcal{A}_{\hat{\a}},V} = |u_{c,\mathcal{A}_{\hat{\a}},V}|e^{i\theta}$ for some $\theta\in\R$. 
%
%\smallskip
%
%Now, one can proceed in a standard way by Schwartz symmetrization (see for example Bellazzini et al. in \cite{Bellazzni-et.al.-CMP-2017}) and conclude that $|u_{c,\mathcal{A}_{\hat{\a}}, V}|$ is cylindrically symmetric about $(x_1,x_2)$. 
%
%\smallskip
%
%Moreover, $|u_{c,\mathcal{A}_{\hat{\a}},V}|$ satisfies the Schr\"odinger equation \eq{abeq1.5}, with a $C^1(\R^3\setminus\{0\})$ potential. 
%Then, by standard regularity results $|u_{c,\mathcal{A}_{\hat{\a}},V}|\in C^{2}(\R^3\backslash\{0\})$, is a classical solution (see also Remark \ref{Rreg}). 
%Applying Harnack's inequality,  we  conclude that it is strictly positive on $\R^3\backslash\{0\}$.
%
%\smallskip
%
%{ In Proposition \ref{P2} of the coming Section \ref{S5}, we see that the first eigenvalue of the linear part is simple.  Then, applying point 1 of Theorem \ref{th1.3}, we establish uniqueness.
%}
\end{proof}

%\deleted[id=R.M.]{Here I have deleted the old remark, as %you suggest.}

%\todobl[inline]{I know that this result is true if we can read the minimum problem for the magnetic case as a non magnetic one, as done in Lemma \ref{able5.1}. 
%Is this result true also in the other case? If yes, where is it written and proved?
%\\
%If we have not a correct reference, it is better to %write this remark with the auxiliary assumption that the %minimum problem is equivalent to a non-magnetic one, %that is the case we are studying. \\ {\vi  We both agree %with your opinion. Maybe we can delete this remark. }%{\bl I have deleted it.}}

%\vspace{0.5cm}

%%%%%%%%%%%%%%%%%%%%%%%%%%%%%%%%%%%%%%%%%%%%%%%%%%%%%%%%%%%%%%%%%%%%%%%%%%%%%%%%%%%%%%%%%%%%%%%%

\section{Simplicity of the first eigenvalue}
\label{S5}

%%%%%%%%%%%%%%%%%%%%%%%%%%%%%%%%%%%%%%%%%%%%%%%%%%%%%%%%%%%%%%%%%%%%%%%%%%%%%%%%%%%%%%%%%%%%%%%%

In this section, we present some concrete cases in which the first eigenvalue of the linear part is simple.
In detail, in Proposition \ref{leA.1} we show that the first eigenvalue of $\hat{L}_{A,V} = (i\nabla + A(x))^2 + V(x)$ is simple if $A$ is small, in Proposition \ref{P2} we analyze the magnetic potential $\mathcal{A}_\alpha$ and finally in Proposition \ref{crA.4} we consider the particular case introduced in Example \ref{ex1}.
As a consequence of the latter case, we can prove Theorem \ref{th1.6}.

\begin{proposition}\label{leA.1}
Let $A,V$ satisfy the assumptions \eq{V1} and \eq{V2}. 
For $t\in \R$,   let $\hat{L}_{tA,V}$ be the operator in \eq{1441} with magnetic potential $tA$.
Then there exists $t_0>0$ such that, for all $|t|<t_0$, the corresponding first eigenfunction $\hat{\psi}_{1,tA,V}$ is unique up to a constant phase.
\end{proposition}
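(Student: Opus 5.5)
We argue by perturbation from the non-magnetic operator $\hat L_{0,V}$. By Lemma \ref{LBar}, $\hat\la_{1,0,V}=\la_{1,0,V}$ is simple and its eigenspace is spanned by the real positive eigenfunction $\psi_{1,0,V}$. Moreover there is a spectral gap $g:=\hat\la_{2,0,V}-\hat\la_{1,0,V}>0$. The plan is to show that the quadratic forms of $\hat L_{tA,V}$ and $\hat L_{0,V}$ are uniformly close, in a relative sense, as $t\to0$. Then the gap persists by the min--max principle, so the first eigenvalue of $\hat L_{tA,V}$ stays simple.

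\textbf{Step 1: relative form bound.} The key step is to compare the forms. We have
\[
q_t(u):=\int_{\R^3}|(i\nabla+tA)u|^2+V|u|^2 = q_0(u)+2t\,\Re\int_{\R^3} i(A\cdot\nabla u)\bar u + t^2\int_{\R^3}|A|^2|u|^2 .
\]
By the Cauchy inequality and \eq{V1},
\[
\Big|2\Re\int_{\R^3} i(A\cdot\nabla u)\bar u\Big|\le \|\nabla u\|_2^2+\int_{\R^3}|A|^2|u|^2\le \max\{1,\bar\a^{-1}\}\,q_0(u),
\]
and in addition $t^2\int_{\R^3}|A|^2|u|^2\le t^2\bar\a^{-1}q_0(u)$. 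Hence
\[
(1-C|t|)\,q_0(u)\le q_t(u)\le (1+C|t|)\,q_0(u)
\]
for $|t|\le1$. The form domains coincide, which is the equivalence \eq{924}.

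\textbf{Step 2: the gap persists.} By min--max on the common domain,
\[
(1-C|t|)\hat\la_{k,0,V}\le\hat\la_{k,tA,V}\le(1+C|t|)\hat\la_{k,0,V},\qquad k=1,2 .
\]
Therefore
\[
\hat\la_{2,tA,V}-\hat\la_{1,tA,V}\ge g-C|t|\big(\hat\la_{1,0,V}+\hat\la_{2,0,V}\big)>0
\]
once $|t|<t_0:=g/\big(C(\hat\la_{1,0,V}+\hat\la_{2,0,V})\big)$.

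\textbf{Step 3: conclusion.} Since the eigenvalues are repeated according to multiplicity (Lemma \ref{Ale2.8}), the strict inequality $\hat\la_{1,tA,V}<\hat\la_{2,tA,V}$ means the first eigenspace is one-dimensional over $\C$. So $\hat\psi_{1,tA,V}$ is unique up to a constant phase.

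The main obstacle is Step 1: the cross term $2t\,\Re\int i(A\cdot\nabla u)\bar u$ must be controlled by $q_0$ rather than by $q_t$, using only $V\ge\bar\a|A|^2$, and the singularity of $A$ at $0$ must be handled. The Cauchy estimate above does exactly this, since it needs only $|A|^2\le V/\bar\a$ and no smoothness of $A$. Hypothesis \eq{V2} is not needed for this argument.

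An alternative route, matching the paper's methods, is the implicit function theorem applied to
\[
(u,\la,t)\mapsto \big(\hat L_{tA,V}(\psi_{1,0,V}+u)-\la(\psi_{1,0,V}+u),\ \|\psi_{1,0,V}+u\|_2^2-1\big),
\qquad u\perp_{\C}\psi_{1,0,V}.
\]
This gives a branch of eigenpairs, but proving that this branch is the bottom of the spectrum and that no other eigenvector appears would still require the gap argument of Step 2. So I would use the min--max route.
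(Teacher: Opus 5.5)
Your proof is correct, but it takes a different route from the paper's.

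The paper applies the implicit function theorem to $F(u_1,u_2,\lambda,t,s)=[(i\nabla+tA)^2+V](u+(1+s)\psi_{1,0,V})-(\lambda_{1,0,V}+\lambda)(u+(1+s)\psi_{1,0,V})$, with $u\in\mathcal{H}_1^\perp\cap\Sigma_{0,V}$. A second application of the theorem fixes the $L^2$ normalization, which yields a $C^1$ branch of normalized eigenpairs of $\hat L_{tA,V}$ through $(\psi_{1,0,V},\lambda_{1,0,V})$. It then uses compactness. Testing with $\psi_{1,0,V}$ gives $\hat\lambda_{1,tA,V}\le\lambda_{1,0,V}+o_t(1)$. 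A weak limit of normalized first eigenfunctions is then a minimizer for $\hat L_{0,V}$, hence a phase multiple of $\psi_{1,0,V}$, and the convergence is strong. So every normalized first eigenfunction eventually lies, up to a phase, in the uniqueness neighbourhood of the branch.

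Your remark that the implicit-function route would still need a gap argument is therefore not accurate. The paper closes that step by compactness. That compactness step alone already rules out two orthonormal first eigenfunctions, since both would have to be close to phase multiples of $\psi_{1,0,V}$.

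Your approach replaces this machinery with the two-sided bound $(1-C|t|)q_0\le q_t\le(1+C|t|)q_0$ on the common form domain $\Sigma_{0,V}=\Sigma_{tA,V}$, followed by min--max. It is shorter and more quantitative. It gives an explicit $t_0$ in terms of the spectral gap of $-\Delta+V$ and $\bar\alpha$, and Lipschitz control of every $\hat\lambda_{k,tA,V}$. It also makes clear that only \eqref{V1} is used and \eqref{V2} is superfluous, which is equally true of the paper's proof. What the paper's route gives in addition is $C^1$ dependence of the first eigenpair on $t$.
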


\begin{proof}
The proof employs the implicit function theorem.
By Lemma \ref{LBar}, the magnetic operator $\hat L_{0,V}$ has the same eigenevalues of the corresponding real Schr\"odinger operator $L_{0,V}$, and the same eigenvectors, up to scalar factors in $\C$.
In particular, its first eigenvalue is simple, according to Lemma \ref{Ale2.7}.
Now, we decompose $L^2(\R^3,\C)$ into the direct sum of eigenspaces corresponding to the eigenvalues of the operator ${L}_{0,V}$, that is
$$
L^2(\R^3,\C) = \bigoplus_{k=1}^\infty \mathcal{H}_k,
$$
where $\mathcal{H}_k$ is the eigenspace corresponding to the $k$-th eigenvalue $\la_{k,0,V}$. 
Note that 
\[\mathcal{H}_1 = \{\tau\psi_{1,0,V}: \tau\in \C\} \text{ and } \mathcal{H}^{\bot}_1 = \bigoplus_{k=2}^\infty \mathcal{H}_k.\]
%Denote by $\{{\psi}_{0}\}^{\bot}$ the orthogonal space in $L^2(\R^3,\C)$ to ${\psi}_{0}$ with respect to the $L^2$ scalar product.
Denote $\la_1 = \la_{1,0,V}$ and define the function $F :\big[\mathcal{H}^{\bot}_1\cap \Sigma_{0,V}\big] \times \R^3\to \Sigma_{0,V}^*$ as
\begin{align*}
F(u_1,u_2,\la,t,s)
&= \big[(i\nabla + tA(x))^2 + V(x)\big](u+(1+s)\psi_{1,0,V}) - (\la_1+\la)(u+(1+s)\psi_{1,0,V})
\end{align*}
where $u=u_1+iu_2\in \mathcal{H}^{\bot}_1\cap{{\Sigma}_{0,V}}$ and  $\Sigma^*_{0,V}$ is the dual space of ${\Sigma}_{0,V}$. 
There hold
\beq
\label{1722}
\begin{array}{cl}
(i)& \   F(0,0,0,0,0) = ( \hat L_{0,V} - \la_1)\psi_{1,0,V} = 0,\\
(ii) &\ F_{u_1}(0,0,0,0,0) =  \hat L_{0,V}- \la_1 ,\\
(iii)&\   F_{u_2}(0,0,0,0,0) = i( \hat L_{0,V} - \la_1),\\
(iv)&\ F_\la (0,0,0,0,0)= -\psi_{1,0,V},\\
(v)&\ F_s(0,0,0,0,0)=0.
\end{array}
\eeq
For every
\[(w,\hat{\la}) = (u_1+iu_2,\hat{\la})\in \big[\mathcal{H}^{\bot}_1\cap{{\Sigma}_{0,V}}\big]\times \R,\]
we have
\begin{align*}
&\hspace{-3mm}(F_{u_1}(0,0,0,0,0), F _{u_2}(0,0,0,0,0), F _\la (0,0,0,0,0))(u_1,u_2,\hat{\la})\\
 &=  ( \hat L_{0,V} -\la_1)u_1 + i( \hat L_{0,V} -\la_1)u_2 - \hat{\la}\psi_{1,0,V} = ( \hat L_{0,V} -\la_1)w - \hat{\la}\psi_{1,0,V}.
\end{align*}
Then, it is readily seen that the operator
\begin{align*}
F_{u_1,u_2,\la}(0,0,0,0,0)&:\big[\mathcal{H}^{\bot}_1\cap \Sigma_{0,V}\big]\times \R\to \Sigma_{0,V}^*\\
(w,\hat{\la})&\mapsto (L_0-\la_1)w - \hat{\la}\psi_{1,0,V}
\end{align*}
 is an isomorphism. 
 Therefore, by the implicit function theorem (\cite[Theorems 1.2.1 and 1.2.3]{KC-Zhang-2006}) and \eq{1722}, there exist $\de_{1},\de_{2}>0$ and a unique function \[(u_1(t,s),u_2(t,s),\la(t,s))\in C^1(B_{\de_{1}}(0,0);B_{\de_{2}}(0,0))\]
 such that
\begin{equation}\label{eq4.12}
\left\{
  \begin{array}{ll}
    H(t,s):=F(u_1(t,s),u_2(t,s),\la(t,s),t,s) = 0,\quad \forall (t,s)\in  B_{\de_{1}}(0,0),& \\
    u_1(0,0) = 0,u_2(0,0)=0,\ \la(0,0) = 0, & \\
    \left[\frac{\partial H}{\partial s}\right]_{(t,s)=(0,0)}=(\hat L_{0,V}-\la_1)\frac{\partial u}{\partial s}(0,0) - \frac{\partial\la}{\partial s}(0,0)\psi_{1,0,V} = 0. &
  \end{array}
\right.
\end{equation}

Now let
$$
\hat{u}(t,s) = u(t,s)+(1+s)\psi_{1,0,V},\quad (t,s)\in B_{\de_{1}}(0,0),
$$
and define
$$
G(t,s) = |\hat{u}(t,s)|^2_2 = (1+s)^2 + \int_{\R^3}|u(t,s)|^2,\quad  (t,s)\in B_{\de_{1}}(0,0).
$$
By \eq{eq4.12}, we have
$$
G(0,0)=1\ \text{and}\ G_s(0,0) = 2 + 2\Re\int_{\R^3}u_s(0,0)\cdot\overline u(0,0) = 2.
$$
Then,  applying implicit function theorem again, there exist $\bar \de\in(0,\de_{1})$ and a unique $s=s(t)\in C^1(B_{\bar\de}(0);B_{\de_{1}}(0))$ such that
$$
G (t,s(t)) = G (0,0) = 1,\quad\forall t \in B_{\bar\de}(0).
$$
This and \eq{eq4.12} show that, for $t\in B_{\bar \de}(0)$, there exists a unique function 
$$
(u(t):=u(t,s(t)),\la(t):= \la_1 + \la(t,s(t)))\in C^1(B_{\bar\de}(0);B_{\de_{2}}(\psi_{1,0,V},\la_1))
$$
such that
\begin{equation}\label{eq4.13}
  \left\{
    \begin{array}{ll}
      \la(0) = \la_1,\ u(0)=\psi_{1,0,V}, &   \\
      (i\nabla + tA(x))^2 u(t) + V(x)u(t) - \la(t)u(t) = 0, & \\
      |u(t)|^2_2=1.&
    \end{array}
  \right.
\end{equation}

\smallskip

{\em Claim:}  according to \eq{eq4.13}, it remains to show that
\begin{equation*}
(e^{i\a }\hat{\psi}_{1,tA,V},\hat{\la}_{1,tA,V})\in B_{\bar \de}(\psi_{1,0,V},\la_1)\quad  (\text{in the topology of}\ \Sigma_{0,V}\times\R)
\end{equation*}
for some $\a\in\R$ when $|t|>0$ is small. 

\smallskip

By using $\psi_{1,0,V}$ in the variational characterization of the first eigenvalue, a direct computation provides
$$
\hat{\la}_{1,tA,V}\le\int_{\R^3}\big(|(i\nabla + t A(x))\psi_{1,0,V}|^2 + V(x)|\psi_{1,0,V}|^2\big)= \la_{1,0,V}+o_t(1).
$$
Hence, for small $\delta$ we see that $\{\hat{\psi}_{1,tA,V}\}_{|t|<\delta}$ is bounded in $\Sigma_{0,V}$, because 
$$
\hat{\la}_{1,tA,V}=\int_{\R^3}\big(|(i\nabla + t A(x))\hat{\psi}_{1,tA,V}|^2 + V(x)|\hat{\psi}_{1,tA,V}|^2\big)=\|\hat{\psi}_{1,tA,V} \|_{\dot\Sigma_{0,V}}+o_t(1).
$$

Thus, without loss of generality, we assume that $\hat{\psi}_{1,tA,V}\rightharpoonup \hat{\psi}_{0}$ weakly in $\Sigma_{0,V}$ as $t\to 0$, for some $\hat{\psi}_{0}\in \Sigma_{0,V}$. 
By Proposition \ref{pr2.5}, we can assume $\hat{\psi}_{1,tA,V}\to \hat{\psi}_{0}$ strongly in $L^p$ for all $2\le p<2^*$. 
Hence, $\|\hat{\psi}_{0}\|^2_2 = 1$ and 
\begin{align*}
\begin{split}
\la_{1,0,V}+o_t(1)
&\ge\hat{\la}_{1,tA,V}= \int_{\R^3}\big(|(i\nabla + tA(x))\hat{\psi}_{1,tA,V}|^2 + V(x)|\hat{\psi}_{1,tA,V}|^2\big)\\
&\ge \int_{\R^3}\big(|\nabla \hat{\psi}_{0}|^2 + V(x)|\hat{\psi}_{0}|^2\big) + o_1(t)\ge \la_{1,0,V} + o_1(t).
\end{split}
\end{align*}
Consequently, $\hat \psi_0$ is an eigenvector of $\hat L_{0,V}$ that corresponds to the simple eigenvalue $\la_{1,0,V}$, so that $\hat\psi_0=e^{i\a}\psi_{1,0,V}$, for a suitable $\a\in \R$.
Moreover,
$$
\lim\limits_{t\to 0}\hat{\la}_{1,tA,V} = \la_{1,0,V}
$$
and
$$
%\begin{equation}\label{teqA.3}
\hat{\psi}_{1,tA,V}\to  e^{i\a}\psi_{1,0,V}\quad  \text{strongly in}\ \Sigma_{0,V}.
$$
%\end{equation}
So, the claim is proved, and the proof is completed.
\end{proof}

\begin{proposition}\label{P2} 
Assume $\mathcal{A}_{\hat{\a}}(x)= {\hat{\a}}\big(\frac{-x_2}{|x|^2},\frac{x_1}{|x|^2},0\big)$, ${\hat{\a}}\in\R$,  and $V_{\hat{\a}}(x) := V(x) + |\mathcal{A}_{\hat\a}(x)|^2$ satisfies \eq{V1} with $A  \equiv 0$. 
If $|{\hat{\a}}|\le 1/2$, then $\hat{\psi}_{1,\mathcal{A}_\a,V}$ is positive and unique up to a constant phase, with
$$
%\label{1224}
\hat{\psi}_{1,\mathcal{A}_{\hat{\a}},V} = e^{i\theta} \psi_{1,0,V_{\hat{\a}}},\qquad \theta\in\R,
$$
where $ \psi_{1,0,V_{\hat{\a}}}$ is defined in Lemma \ref{Ale2.7}. 
Moreover, assume that $V_{\hat{\a}}$ is cylindrically symmetric and radially nondecreasing w.r.t. $(x_1,x_2)$, then $\hat{\psi}_{1,\mathcal{A}_{\hat{\a}},V}$ is cylindrically symmetric and radially decreasing w.r.t. $(x_1,x_2)$. 

\end{proposition}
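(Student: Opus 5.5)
The plan is to compare the quadratic form of $\hat L_{\mathcal{A}_{\hat\a},V}$ with that of the real operator $L_{0,V_{\hat\a}}=-\Delta+V_{\hat\a}$, using the computation already carried out in the proof of Lemma \ref{able5.1}. By \eqref{eqA.9bis} and \eqref{1243}, for every $u\in\Sigma_{\mathcal{A}_{\hat\a},V}$ and $|\hat\a|\le 1/2$ we have
\[
\int_{\R^3}\big(|(i\nabla+\mathcal{A}_{\hat\a})u|^2+V|u|^2\big)\ \ge\ \int_{\R^3}|\nabla_{r,x_3}u|^2+\int_{\R^3}V_{\hat\a}|u|^2 .
\]
The right-hand side is not yet the full Dirichlet form of $L_{0,V_{\hat\a}}$, because the angular part $r^{-2}|\partial_\theta u|^2$ has been dropped. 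I would therefore first apply the diamagnetic inequality (Lemma \ref{lle2.2}) to bound the left-hand side below by $\int|\nabla|u||^2+V_{\hat\a}|u|^2$. This holds because $|\mathcal{A}_{\hat\a}|^2$ has been moved into $V_{\hat\a}$ only through the identity $|(i\nabla+A)u|^2=|\nabla u|^2+|A|^2|u|^2+2\Re(i A\cdot\nabla u\,\bar u)$. A cleaner route, which I will follow, is to split by Fourier modes $u=\sum_n c_n e^{in\theta}$. The form equals $\sum_n\int\big(|\nabla_{r,x_3}c_n|^2+(\frac{n}{r}-\frac{\hat\a r}{|x|^2})^2|c_n|^2+V|c_n|^2\big)$. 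Since $(\frac nr-\frac{\hat\a r}{|x|^2})^2\ge(\frac{\hat\a r}{|x|^2})^2=|\mathcal{A}_{\hat\a}|^2$, with \emph{equality iff} $n=0$ (for $|\hat\a|\le1/2$ and $n\neq0$ the inequality is strict on a set of positive measure), each mode's form is at least the $L_{0,V_{\hat\a}}$-form of the cylindrically symmetric function $c_n$.

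This yields $\hat\la_{1,\mathcal{A}_{\hat\a},V}\ge\la_{1,0,V_{\hat\a}}$. Conversely, I test with $\psi_{1,0,V_{\hat\a}}$. When $V_{\hat\a}$ is not cylindrically symmetric I have to be careful about the angular derivative, so I instead use the $n=0$ mode of the minimizer, or I directly take $u$ real. In the general case, for real $u$ the cross term $2\Re(i\mathcal{A}\cdot\nabla u\,u)$ vanishes, and the magnetic form equals $\int|\nabla u|^2+V_{\hat\a}u^2$. Hence $\hat\la_1\le\la_{1,0,V_{\hat\a}}$, and equality follows. Equality propagates through the chain: the minimizer $\hat\psi$ satisfies $\int|\nabla|\hat\psi||^2+V_{\hat\a}|\hat\psi|^2=\la_{1,0,V_{\hat\a}}$, so $|\hat\psi|$ is a nonnegative first eigenfunction of $L_{0,V_{\hat\a}}$. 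By Lemma \ref{Ale2.7}(iv) and Harnack, $|\hat\psi|=\psi_{1,0,V_{\hat\a}}>0$ away from the origin.

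To conclude that the phase is constant, I use the equality $|\nabla\hat\psi|=|\nabla|\hat\psi||$, forced by equality in the diamagnetic step together with the vanishing of the cross term, exactly as in \eqref{abeq4.16}. This gives $\nabla\sgn(\bar{\hat\psi})=0$, hence $\hat\psi=e^{i\theta}\psi_{1,0,V_{\hat\a}}$. Uniqueness up to phase follows because any first eigenfunction has this form. The main obstacle is rigorously extracting the equality conditions, namely the vanishing of the magnetic cross term and the exact diamagnetic equality, from equality of the Rayleigh quotients. I expect to handle this with the mode decomposition: the strict inequality for $n\ne0$ forces $c_n=0$ for $n\neq0$, so $\hat\psi$ is $\theta$-independent. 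Then $\mathcal{A}_{\hat\a}\cdot\nabla\hat\psi=0$ and the magnetic form coincides with the real form of $\hat\psi$, from which the constant-phase conclusion follows via Lemma \ref{LBar}.

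Finally, under cylindrical symmetry and radial monotonicity of $V_{\hat\a}$, Schwarz symmetrization in $(x_1,x_2)$ does not increase $\int|\nabla u|^2$ and does not increase $\int V_{\hat\a}u^2$ (rearrangement inequality with a nondecreasing potential). So the symmetrized $\psi_{1,0,V_{\hat\a}}$ is again a minimizer, and by simplicity it coincides with $\psi_{1,0,V_{\hat\a}}$. Strict radial decrease then comes from the equation and the strong maximum principle applied to $\partial_r\psi$.
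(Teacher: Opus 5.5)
\textbf{There is a genuine gap, and it cannot be closed: without symmetry the first assertion is false.}

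Your modal decomposition silently assumes that $V$ does not depend on the angle. The identity ``the form equals $\sum_n\int(\cdots+V|c_n|^2)$'' needs $\partial_\theta V=0$. This is equivalent to $\partial_\theta V_{\hat\alpha}=0$, since $|\mathcal{A}_{\hat\alpha}|^2=\hat\alpha^2r^2/|x|^4$ is $\theta$-independent. Otherwise $\int V|u|^2$ couples different modes, and the first part of the statement assumes no symmetry.

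The fallback bound you assert, $\int|(i\nabla+\mathcal{A}_{\hat\alpha})u|^2+V|u|^2\ge\int|\nabla|u||^2+V_{\hat\alpha}|u|^2$, is false. Take $u=\psi e^{i\varepsilon\eta}$ with $\psi>0$ and $\eta\in C^\infty_c(\R^3\setminus\{0\})$ real. Using $\diverg\mathcal{A}_{\hat\alpha}=0$ one finds
\[
\int|(i\nabla+\mathcal{A}_{\hat\alpha})u|^2-\int\big(|\nabla\psi|^2+|\mathcal{A}_{\hat\alpha}|^2\psi^2\big)=2\varepsilon\hat\alpha\int\frac{\eta\,\partial_\theta(\psi^2)}{|x|^2}+\varepsilon^2\int\psi^2|\nabla\eta|^2 .
\]
This is negative for a suitable small $\varepsilon$ as soon as $\partial_\theta\psi\not\equiv0$.

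To see that the first assertion fails, take real $\psi$ and $|z|=1$. Then $\hat L_{\mathcal{A}_{\hat\alpha},V}(z\psi)=z\big(L_{0,V_{\hat\alpha}}\psi+2i\hat\alpha|x|^{-2}\partial_\theta\psi\big)$. So $z\psi_{1,0,V_{\hat\alpha}}$ is an eigenfunction only if $\partial_\theta\psi_{1,0,V_{\hat\alpha}}\equiv0$. This fails, for example, for $V=|x-e_1|^2+\beta|x|^{-2}$ with $\beta>0$ and $0<|\hat\alpha|\le\frac12$. Indeed, if $\psi_{1,0,V_{\hat\alpha}}>0$ were $\theta$-independent, so would be $V_{\hat\alpha}=\lambda_{1,0,V_{\hat\alpha}}+\Delta\psi_{1,0,V_{\hat\alpha}}/\psi_{1,0,V_{\hat\alpha}}$. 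The display with $\psi=\psi_{1,0,V_{\hat\alpha}}$ then even gives $\hat\lambda_{1,\mathcal{A}_{\hat\alpha},V}<\lambda_{1,0,V_{\hat\alpha}}$.

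The paper's proof breaks at the same point in a different guise. Its inequality \eqref{1223} is written with the full $|\nabla u|^2$, while \eqref{eqA.9bis}--\eqref{1243} only control $|\nabla_{r,x_3}u|^2$. In fact \eqref{1223} is false: for $u=f(r,x_3)e^{in\theta}$ with $n\hat\alpha>0$, its left side minus its right side equals $-2n\hat\alpha\int|x|^{-2}|f|^2<0$.

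Under the extra hypothesis $\partial_\theta V_{\hat\alpha}=0$, which is the only case used in Corollary~\ref{abth1.5}, your Fourier-mode route is correct. It is a genuine repair of the paper's step \eqref{1223}--\eqref{1222}. The modal bound gives $\hat\lambda_{1,\mathcal{A}_{\hat\alpha},V}\ge\lambda_{1,0,V_{\hat\alpha}}$, and the real test function gives equality. For $n\neq0$ and $|\hat\alpha|\le\frac12$,
\[
\Big(\frac nr-\frac{\hat\alpha r}{|x|^2}\Big)^2-\frac{\hat\alpha^2r^2}{|x|^4}=\frac{n^2}{r^2}-\frac{2n\hat\alpha}{|x|^2}\ge\frac{|n|\,x_3^2}{r^2|x|^2},
\]
which is positive almost everywhere. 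That, not mere strictness on a set of positive measure, is what forces $c_n=0$ for $n\neq0$. The minimizer is then $\theta$-independent and the cross term vanishes. Lemma~\ref{LBar} then yields $\hat\psi_{1,\mathcal{A}_{\hat\alpha},V}=z\psi_{1,0,V_{\hat\alpha}}$.

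Drop the diamagnetic chain entirely, including the appeal to ``equality in the diamagnetic step''. Your symmetrization argument for the last part is fine.
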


\begin{proof}  This part is inspired by the Remark of Theorem 7.4 in \cite{Lieb-Loss-AMS-2005}. 
For a function $u$ in ${\Sigma}_{\mathcal{A}_{\hat{\a}},V}$, setting $r^2=x_1^2+x_2^2$ and arguing exactly as for \eq{P29} in Lemma \ref{able5.1},  we obtain
\beq
\label{1223}
\int_{\R^3}\big(|(i\nabla + \mathcal{A}_{\hat{\a}}(x))u|^2 + V(x)|u|^2\big)dx
\ge \int_{\R^3}|\nabla  u|^2dx + \int_{\R^3}\Big(\frac{|\hat{\a}|^2r^2}{(r^2+x^2_3)^2} + V(x)\Big)|u|^2dx,
\eeq
that implies
\begin{multline}
\label{1222}
\inf_{\substack{u\in \Sigma_{\mathcal{A}_{\hat{\a}},V} \\ \|u\|_2 =1}}\left[\int_{\R^3}|(i\nabla+\mathcal{A}_{\hat{\a}}(x))  u|^2 + \int_{\R^3} V(x) \cdot|u|^2\right]
\\
\ge\inf_{\substack{u\in \Sigma_{0,V_{\hat{\a}}} \\ \|u\|_2 =1}}\left[\int_{\R^3}|\nabla  u|^2 + \int_{\R^3}\left(\frac{{\hat{\a}}^2r^2}{(r^2+x^2_3)^2} + V(x)\right)|u|^2\right]
\end{multline}

However, by \cite[Theorem 11.8]{Lieb-Loss-AMS}, the function $\psi_{1,0,V_{\hat{\a}}}$ achieving 
\beq\label{1225}
\inf_{\substack{u\in \Sigma_{0,V} \\ |u|_2 =1}}\left[\int_{\R^3}|\nabla  u|^2 + \int_{\R^3}\left(\frac{{\hat{\a}}^2r^2}{(r^2+x^2_3)^2} + V(r,x_3)\right)|u|^2\right]
\eeq
is unique and positive up to a constant phase. 
Using $\psi_{1,0,V_{\hat{\a}}} $ to test the first infimum in \eq{1222}, it turns out that equality is satisfied in \eq{1222}. 
Consequently, according to \eqref{1223}, the minimizers for the first infimum in \eqref{1222} are also minimizers for \eq{1225}.
So, from the variational characterization of the first eigenvalues, \eq{1223} follows.

Finally, if $V_{\hat{\a}}$ is cylindrically symmetric and  radially nondecreasing w.r.t. $(x_1,x_2)$, then we can proceed as in the proof of part 2 of Theorem \ref{th1.3} and {\cite[Theorem 2]{Bellazzni-et.al.-CMP-2017}}, and conclude the proof.
\end{proof}

%\begin{remark}
%The cylindrical  symmetry  of $v$ can also be obtained by applying the moving plane methods \cite{Gidas-Ni-Nirenberg-CMP-1979}(along the two directions $x_1$ and $x_2$) on the equation
%$$
%-\Delta u + \big(|\a(-x_2/r^2,x_1/r^2,0)|^2 + V(r,x_3)-\la\big)u =0\ \text{in}\ \R^3, u\ge0.
%$$
%By the same argument in Remark \ref{re4.1}, when $V$ is radially symmetric, we can also conclude that $\hat{\psi}_{0,A,V}$ is radially symmetric if $A(x) = \a(-x_2/r^2,x_1/r^2,0)$ with $\alpha\in \R$ satisfying $|\alpha|\le\frac{1}{2}$.
%\end{remark}

\begin{proposition}\label{leA.3}
Let $A_{\rho_k}(x) = \rho_k x^{\bot}$ and $V_{\rho_k}(x) = |x|^{2k} + k \sgn(k-1) - \rho^2_k|x^{\bot}|^2,k\in \N^+$. 
Then there exists a $\rho_{0,k}>0$, such that the first eigenvalue of $(i\nabla + A_{\rho_k})^2 + V_{\rho_k}$ is simple if $|\rho_k|<\rho_{0,k}$.
 Moreover, the first eigenfunction is positive and radial up to a constant phase.
\end{proposition}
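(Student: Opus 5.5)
The plan is to deduce the statement from Proposition \ref{leA.1}, applied with $A=x^{\bot}$ and $t=\rho_k$, after handling the fact that $V_{\rho_k}$ depends on $\rho_k$. Write
\[
(i\nabla+\rho_k x^{\bot})^2+V_{\rho_k}=-\Delta+|x|^{2k}+k\sgn(k-1)+2i\rho_k\, x^{\bot}\cdot\nabla ,
\]
using $\diverg x^{\bot}=0$ and $|\rho_k x^{\bot}|^2=\rho_k^2|x^{\bot}|^2$, so that the $\rho_k^2$ terms cancel. Set $W_k(x):=|x|^{2k}+k\sgn(k-1)$, which does not depend on $\rho_k$. The operator is then a first-order perturbation $\hat L_{0,W_k}+2i\rho_k\,x^{\bot}\cdot\nabla$ of the real Schr\"odinger operator $L_{0,W_k}$.

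By Lemma \ref{LBar} and Lemma \ref{Ale2.7}, $L_{0,W_k}$ has a simple first eigenvalue with a positive eigenfunction $\psi_{1,0,W_k}$. Since $W_k$ is radial and radially increasing, uniqueness together with rotation invariance (or Schwarz symmetrization) shows that $\psi_{1,0,W_k}$ is radial.

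The key structural observation is that $x^{\bot}\cdot\nabla=\partial_\theta$ in cylindrical coordinates. Hence $x^{\bot}\cdot\nabla\psi_{1,0,W_k}=0$, so $\psi_{1,0,W_k}$ is an exact eigenfunction of the perturbed operator, with the same eigenvalue $\lambda_{1,0,W_k}$, for every $\rho_k$.

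Next comes simplicity for small $|\rho_k|$. I would repeat the implicit function argument of Proposition \ref{leA.1}, with $F(u_1,u_2,\la,t,s)$ built on $\hat L_{0,W_k}+2it\,x^{\bot}\cdot\nabla$ and working in $\Sigma_{0,W_k}$. Assumptions \eq{V1}--\eq{V2} hold for Example \ref{ex1} when $|\rho_k|<1$, and one has
\[
\int W_k|u|^2\ \ge\ \int \big(V_{\rho_k}+\rho_k^2|x^{\bot}|^2\big)|u|^2 .
\]
So the spaces $\Sigma_{A_{\rho_k},V_{\rho_k}}$ and $\Sigma_{0,W_k}$ coincide with equivalent norms, uniformly for $|\rho_k|\le 1/2$. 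Then I would run the compactness step exactly as in the claim of Proposition \ref{leA.1}. Any normalized first eigenfunction $\hat\psi_{1,\rho_k}$ satisfies $\hat\la_{1,\rho_k}\le\lambda_{1,0,W_k}$, which follows by testing with $\psi_{1,0,W_k}$ (in fact with equality). It is bounded in $\Sigma_{0,W_k}$, and up to a phase it converges strongly to $\psi_{1,0,W_k}$ as $\rho_k\to0$. It therefore lies in the neighbourhood where the implicit function theorem gives a unique solution. Consequently the first eigenspace is one-dimensional for $|\rho_k|<\rho_{0,k}$, and the first eigenfunction equals $e^{i\theta}\psi_{1,0,W_k}$, which is positive and radial up to a phase.

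The main obstacle is making the compactness argument uniform, i.e.\ bounding $\int|\nabla\hat\psi_{1,\rho_k}|^2$ by the quadratic form of the perturbed operator. I would handle it via the inequality
\[
|2\rho_k\Re\, i\,(x^{\bot}\cdot\nabla u)\bar u|\le \tfrac12|\nabla u|^2+2\rho_k^2|x^{\bot}|^2|u|^2 ,
\]
and absorb the last term into $W_k$. For $k=1$ this needs $2\rho_k^2<1$. For $k\ge2$, $|x^{\bot}|^2\le |x|^{2k}+k$ makes the absorption immediate.

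A cleaner alternative route is available. The operator $\partial_\theta$ commutes with $\hat L_{0,W_k}$, so one can decompose into angular Fourier modes $e^{in\theta}$. On the mode $n$ the operator acts as $\hat L_{0,W_k}$ restricted to that sector minus $2\rho_k n$, and the first eigenvalue on sector $n$ is at least $\lambda_{1,0,W_k}+\gamma|n|$ for some gap $\gamma$ coming from the centrifugal term $n^2/r^2$. For small $|\rho_k|$ this forces the ground state to lie in the $n=0$ sector, where simplicity and positivity of $\psi_{1,0,W_k}$ apply. I would keep this as a backup if the implicit function route runs into functional-setting issues.
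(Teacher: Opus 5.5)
Your proposal is correct. For the simplicity part it is essentially the paper's argument: the paper also reruns the implicit function scheme of Proposition~\ref{leA.1}, with $\hat F$ built on $-\Delta+|x|^{2k}+k\sgn(k-1)$ and its real first eigenfunction $\psi_{1,k}$, which are your $W_k$ and $\psi_{1,0,W_k}$.

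The genuine difference is how you get positivity and radial symmetry. The paper treats these separately. Using that $A_{\rho_k}$ has the form \eqref{ueq1.12} and $V_{\rho_k}$ is $G$-invariant, it derives \eqref{eq5-6} as in part~2 of Theorem~\ref{th1.3} and passes to a non-magnetic problem. It then gets nonnegativity as in \eqref{abeq4.16} and concludes by symmetrization and Harnack (details omitted).

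You instead use the cancellation of the $\rho_k^2$ terms, which leaves $-\Delta+W_k+2i\rho_k\partial_\theta$ with $W_k$ independent of $\rho_k$. So the radial $\psi_{1,0,W_k}$ is an exact eigenfunction for every $\rho_k$, and the implicit-function branch is the trivial one ($u\equiv0$, $\la\equiv0$, $s\equiv0$). Once simplicity is known, the first eigenfunction is $e^{i\theta}\psi_{1,0,W_k}$ with $\hat\la_{1,A_{\rho_k},V_{\rho_k}}=\la_{1,0,W_k}$, so positivity and radiality come for free.

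Your route buys several things:
\begin{itemize}
\item It is shorter and self-contained.
\item It directly gives the identification $\hat\psi_{1,A_\rho,V_\rho}=e^{i\theta}\psi_1$, $\hat\la_{1,A_\rho,V_\rho}=3$ used in Corollary~\ref{crA.4}.
\item With the exact eigenfunction in hand, the implicit function theorem is superfluous. Your compactness step, applied to a normalized first eigenfunction orthogonal to $\psi_{1,0,W_k}$, already gives a contradiction for small $|\rho_k|$. Such an eigenfunction would exist if either $\hat\la_{1,A_{\rho_k},V_{\rho_k}}<\la_{1,0,W_k}$ or the eigenspace were not one-dimensional.
\end{itemize}
The paper's route, by contrast, does not rely on the perturbation being exactly $\partial_\theta$, and it stays within the $G$-equivariant framework of Theorem~\ref{th1.3}.

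Your Fourier-mode backup is also sound. Let $\mu_n$ be the lowest eigenvalue of $-\Delta+W_k$ on the mode $e^{in\theta}$. For $n\neq0$ it satisfies $\mu_n\ge\mu_1>\mu_0=\la_{1,0,W_k}$, and also $\mu_n\ge\inf_{r>0}(n^2r^{-2}+r^{2k})\ge c_k|n|^{2k/(k+1)}$. Together these give the linear gap $\gamma|n|$ and an explicit threshold: for $k=1$ one has $\mu_n=3+2|n|$, so $\rho_{0,1}=1$.

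Two small corrections:
\begin{itemize}
\item Testing with $\psi_{1,0,W_k}$ gives only the upper bound $\hat\la_{1,A_{\rho_k},V_{\rho_k}}\le\la_{1,0,W_k}$ (exactly, with no $o(1)$ error). Equality is a conclusion of the argument, not an input.
\item You only need \eqref{V1}, which gives discreteness of the spectrum, and not \eqref{V2}. For $k=1$, \eqref{V2} actually fails when $|\rho_1|\ge\sqrt5/3$.
\end{itemize}
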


\begin{proof}
The proof that the first eigenvalue is simple follows closely that of Proposition \ref{leA.1}, by using the same implicit function argument. 
Here we consider the function
\begin{align*}
\hat{F}(u_1,u_2,\la,t,s)
&= \big[(i\nabla + tx^{\bot})^2 + (|x|^{2k} + k \sgn (k-1) - t^2|x^{\bot}|^2)\big](u+(1+s)\psi_{1,k})\\
&\quad - (\la_{1 ,k}+\la)(u+(1+s)\psi_{1,k}),
\end{align*}
where $\la_{1,k}$ is the first eigenvalue  of the operator $-\Delta + |x|^{2k} + k \sgn(k-1)$ and $\psi_{1,k}$ is the corresponding real first eigenfunction normalized in $L^2$, uniquely determined  by Lemmas \ref{Ale2.7} and \ref{LBar}.

To deal with positivity,  we consider the variational characterization of the first eigenvalue, that $A_{\rho_k}$ is of the form \eq{ueq1.12} and $V_{\rho_k}$ is $G$-invariant. 
Then we can argue as in the proof of part 2 of Theorem \ref{th1.3}.
Namely, we obtain \eq{eq5-6}, so that we can consider a non-magnetic problem, for which the non-negativity of the eigenfunction can be proved in a standard way (see also \eq{abeq4.16}).

Finally, by using standard symmetrization arguments, and the Harnak inequality, we obtain the symmetry and the positivity of the first eigenfunction.

We omit the details.
\end{proof}

Using the previous proposition, we can describe the asymptotic behavior of the solutions provided by Theorem \ref{th1.1}, and of the related Lagrange multipliers.
As a consequence, we will derive the proof of Theorem \ref{th1.6}.

\begin{corollary}\label{crA.4}
Let $A_{\rho}(x) =  \rho x^{\bot}$, $V_{\rho}(x) = |x|^{2}  - \rho^2|x^{\bot}|^2$,  and $(u_{c,A_{\rho},V_\rho}, {\la}_{c,A_{\rho},V_\rho})$ as in Theorem \ref{th1.1}.
Then there exist $\rho_0>0,c_0(\rho_0)>0$ such that
\begin{align*}
\hat{\la}_{1,A_{\rho},V_\rho}= \la_1 = 3,\ \la_{c,A_{\rho},V_\rho}- \la_1 = O(c)\ \ \text{and}\ \ \|u_{c,A_{\rho},V_\rho}-\tilde{l}_{c,A_{\rho},V_\rho}\psi_{1}\|_{\dot{\Sigma}_{A_\rho,V_\rho}}^2/c= o_c(1)
\end{align*}
if $|\rho|<\rho_0$ and $0<c<c_0(\rho_0)$, where $\la_1$ is the first eigenvalue  of $-\Delta + |x|^2$ and $\psi_1(x):=\pi^{-\frac{3}{4}}e^{-\frac{1}{2}|x|^2}$ is the corresponding first eigenfunction, and $\tilde{l}_{c,A_{\rho},V_\rho}= \int_{\R^3}\psi_1u_{c,A_{\rho},V_\rho}$ (see \cite[equation  (IV.1)]{Weinstein-CMP-1983}).

\end{corollary}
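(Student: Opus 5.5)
Three facts drive the proof: an exact computation of the linear spectrum, a Taylor expansion of the implicit-function branch built for uniqueness in Theorem \ref{th1.3}, and a variational bound on the Lagrange multiplier.

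\textbf{Step 1: the linear spectrum.} Since $A_\rho=\rho x^{\bot}$ is divergence free and $|A_\rho|^2=\rho^2|x^\bot|^2$, expanding \eq{1205} gives
\[
\hat L_{A_\rho,V_\rho} = -\Delta + |x|^2 + 2i\rho\, x^{\bot}\cdot\nabla = -\Delta+|x|^2 - 2\rho L_z ,
\]
where $L_z=-i(x_1\partial_{x_2}-x_2\partial_{x_1})$ is the angular momentum operator. $L_z$ commutes with the harmonic oscillator, so the two can be diagonalized together, with Hermite eigenvalues $3+2n$ and $|m|\le n$. Hence every eigenvalue of $\hat L_{A_\rho,V_\rho}$ is at least $3+2n-2|\rho|n \ge 3$ when $|\rho|<1$. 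Equality holds only for $n=0$, where the eigenfunction is the Gaussian $\psi_1$, which is radial and so satisfies $L_z\psi_1=0$. Therefore $\hat\la_{1,A_\rho,V_\rho}=3=\la_1$, the eigenvalue is simple, and $\hat\psi_1=\psi_1$. This also gives the simplicity of Proposition \ref{leA.3} directly.

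\textbf{Step 2: convergence of the normalized minimizer.} Let $v_c=u_{c,A_\rho,V_\rho}/\sqrt c$, so that $v_c$ minimizes $\breve m^r_c$. By \eq{eq5-1}, $\breve m^r_c\le \frac{\la_1}{2}$. We also have $\frac{c}{4}\|v_c\|_4^4\le C c\,\|v_c\|^3_{\dot\Sigma}=O(c)$, using Remark \ref{re2.4} and the bound $\|v_c\|^2_{\dot\Sigma}\le r/c$, or more directly the bound from \eq{ntang}. Together these give
\[
\la_1\le \|v_c\|^2_{\dot\Sigma_{A,V}}\le \la_1+O(c).
\]
Decompose $v_c=\tilde l\,\psi_1+w_c$ with $w_c\perp\psi_1$. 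The spectral gap from Step 1, equal to $2(1-|\rho|)$, then yields
\[
\|w_c\|^2_{\dot\Sigma}\le C\bigl(\|v_c\|^2_{\dot\Sigma}-\la_1\bigr)=O(c).
\]
This is already $o(1)$, so $\|u_c-\tilde l_c\psi_1\|^2_{\dot\Sigma}/c\to0$. Here $\tilde l_c=\sqrt c\,\tilde l$, and $\tilde l=\int\psi_1 v_c$ because $\psi_1$ is real.

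\textbf{Step 3: the Lagrange multiplier.} Test the equation against $\bar v_c$:
\[
\la_{c}=\|v_c\|^2_{\dot\Sigma}-c\|v_c\|_4^4 .
\]
By Step 2, $\|v_c\|^2_{\dot\Sigma}=\la_1+O(c)$, and $\|v_c\|_4$ stays bounded because $v_c$ is bounded in $\Sigma_{A,V}$. Hence $\la_c-\la_1=O(c)$.

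Alternatively, one can read the same conclusions off the $C^1$ branch $(u(c),\la(c))$ from the proof of Theorem \ref{th1.3} part 1. That branch has $\la(0)=\hat\la_1$ and is $C^1$, so $\la(c)-\la_1=O(c)$; moreover, by uniqueness, the minimizer coincides with the branch up to a phase. This route also yields the sharper statement $\|v_c-e^{i\theta}\psi_1\|_\Sigma=O(c)$.

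\textbf{Main obstacle.} The delicate point is Step 1, specifically justifying the simultaneous diagonalization (domains, and $[L_z,-\Delta+|x|^2]=0$) and the restriction $|\rho|<\rho_0$. The explicit Hermite and angular momentum argument handles this and in fact works for any $\rho_0<1$. If one prefers to avoid it, Proposition \ref{leA.3} gives simplicity for small $\rho$, and the identity $\hat\la_1=3$ follows from $\hat L\psi_1=3\psi_1$ together with the lower bound $\hat L\ge 3$. That lower bound comes from the diamagnetic-type estimate $\langle L_z u,u\rangle\le \langle(-\Delta+|x|^2-3)u,u\rangle/2$, obtained by expanding in Hermite modes.
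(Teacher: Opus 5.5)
Your argument is correct, modulo one repair noted below, and it takes a genuinely different route from the paper.

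\textbf{Comparison with the paper.} The paper gets $\hat\la_{1,A_\rho,V_\rho}=3$ and $\hat\psi_1=e^{i\theta}\psi_1$ from Proposition~\ref{leA.3}. That is an implicit-function argument, so it covers only small $|\rho|$. The paper then invokes the uniqueness and symmetry of Theorem~\ref{th1.3} to know that $u_c$ is radial and positive up to a phase. The magnetic term then drops out, and $v_c=u_c/\sqrt c$ solves $-\Delta v+|x|^2v-c|v|^2v=\la_c v$. Testing against $\psi_1$ gives $(\la_1-\la_c)\tilde l=c\int|v_c|^2v_c\psi_1$. From this, the qualitative convergence $v_c\to\psi_1$ yields $|\la_1-\la_c|/c\to\int\psi_1^4$ and $\|v_c-\tilde l\psi_1\|^2_{\dot\Sigma_{A,V}}=\la_c+c\|v_c\|_4^4-\la_1|\tilde l|^2\to0$.

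You instead read off the entire spectrum from $\hat L_{A_\rho,V_\rho}=-\Delta+|x|^2-2\rho L_z$. This is valid for all $|\rho|<1$, with explicit gap $2(1-|\rho|)$, and you combine it with the energy bound $2\breve m^r_c<\la_1$. Step 2 uses only minimality, not uniqueness, symmetry or the Euler--Lagrange equation. This gives more than the statement:
\begin{itemize}
\item it applies to every element of $\mathcal M^r_c$;
\item the constants are uniform in $|\rho|\le\rho_0$ for any $\rho_0<1$;
\item the rate improves to $\|u_c-\tilde l_c\psi_1\|^2_{\dot\Sigma_{A,V}}=O(c^2)$.
\end{itemize}

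The paper's identity also identifies the leading coefficient of $\la_c-\la_1$, but your route recovers that as well. Comparing with $\mathcal E_c(\psi_1)=\frac{\la_1}{2}-\frac c4\|\psi_1\|_4^4$ gives $0\le\|v_c\|^2_{\dot\Sigma_{A,V}}-\la_1\le\frac c2(\|v_c\|_4^4-\|\psi_1\|_4^4)=o(c)$, hence $\la_c-\la_1=-c\|\psi_1\|_4^4+o(c)$.

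The joint diagonalization you flag as the main obstacle is routine. The products of Laguerre functions in $|x^\bot|$, $e^{im\theta}$, and Hermite functions in $x_3$ are Schwartz eigenfunctions of $\hat L_{A_\rho,V_\rho}$ forming an orthonormal basis of $L^2$. By the orthogonality of eigenfunctions in Lemma~\ref{Ale2.8}, they exhaust its spectrum.

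\textbf{The repair in Step 2.} The bound $\|v_c\|^2_{\dot\Sigma_{A,V}}\le r/c$ is all that the constraint, or \eqref{ntang}, provides. On its own it only gives $c\|v_c\|^3_{\dot\Sigma_{A,V}}\le r^{3/2}c^{-1/2}$, which is not $O(c)$. You first need $\|v_c\|_{\dot\Sigma_{A,V}}=O(1)$.

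To get it, apply $\|v_c\|_{\dot\Sigma_{A,V}}\le\sqrt{r/c}$ to one factor only, so that $\frac c4\|v_c\|_4^4\le\frac14 C_{3,4}^4\sqrt{rc}\,\|v_c\|^2_{\dot\Sigma_{A,V}}$. Then absorb this into $\mathcal E_c(v_c)\le\frac{\la_1}{2}$, exactly as in Proposition~\ref{pr3.1}. This gives $\|v_c\|^2_{\dot\Sigma_{A,V}}\le 2\la_1$ once $C_{3,4}^4\sqrt{rc}\le1$. Only then does $c\|v_c\|_4^4=O(c)$ follow, and the rest of Steps 2 and 3 goes through as written.
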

\begin{proof}
Observe that Proposition \ref{leA.3}, implies $\hat{\psi}_{1,A_{\rho},V_{\rho}} = e^{i\theta}\psi_1$ for some $\theta\in[0,2\pi)$ when $|\rho|>0$ is small.
Furthermore, by Theorem \ref{th1.3}  we infer that $u_{c,A_{\rho},V_{\rho}}$ is unique, positive, and radially symmetric, up to a constant phase, if $c$ is smaller than a constant depending on $\rho$. 
Now, denote $v_{c,A_\rho,V_{\rho}} = u_{c,A_\rho,V_{\rho}}/\sqrt{c}$. 
Then
$$
-\Delta v_{c,A_\rho,V} + |x|^2v_{c,A_\rho,V} - c|v_{c,A_\rho,V}|^2v_{c,A_\rho,V} = \la_{c,A_\rho,V}v_{c,A_\rho,V}
$$
and
$$
\int_{\R^3}\big(-\Delta v_{c,A_\rho,V} + |x|^2v_{c,A_\rho,V}\big)\psi_1 - c\int_{\R^3}|v_{c,A_\rho,V}|^2v_{c,A_\rho,V}\psi_1 = \la_{c,A_\rho,V}\int_{\R^3}v_{c,A_\rho,V}\psi_1,
$$
which implies
$$
\la_1\tilde{l}_{c,A_{\rho},V}- c\int_{\R^3}|v_{c,A_\rho,V}|^2v_{c,A_\rho,V}\psi_1 = \la_{c,A_\rho,V}\tilde{l}_{c,A_{\rho},V},
$$
where we have used the fact $-\Delta \psi_1 + |x|^2\psi_1=\la_1\psi_1$. 
Then
\begin{equation*}
(\la_1-\la_{c,A_\rho,V})\tilde{l}_{c,A_{\rho},V} = c\int_{\R^3}|v_{c,A_\rho,V}|^2v_{c,A_\rho,V}\psi_1.
\end{equation*}
Arguing as in the proof of part 1 of Theorem \ref{th1.3} we can see that $v_{c,A_\rho,V}\to \psi _1$ in $\Sigma_{A,V}$, and in $L^4$,  as $c\to 0^+$. 
Notice that the argument is independent of $\rho$, when working on the radial, real-valued functions. 
As a consequence,  $|\tilde{l}_{c,A_{\rho},V}|\to 1$ and
\[
\left|\int_{\R^3}|v_{c,A_\rho,V}|^2v_{c,A_\rho,V}\psi_1\right|\to \int_{\R^3}|\psi_1|^4.
\]
As a result, we have
$$
\frac{|\la_1-\la_{c,A_\rho,V}|}{c} =\frac{|\int_{\R^3}|v_{c,A_\rho,V}|^2v_{c,A_\rho,V}\psi_0|}{|\tilde{l}_{c,A_{\rho},V}|}\to \int_{\R^3}|\psi_0|^4,
$$
which implies $\la_1-\la_{c,A_\rho,V} = O(c)$.

\smallskip

Finally, by the analysis above, we have
\begin{align*}
\|v_{c,A_\rho,V}-\tilde{l}_{c,A_{\rho},V}\psi_1\|^2_{\dot{\Sigma}_{A,V}}
 &= \int_{\R^3}|\nabla (v_{c,A_\rho,V} - \tilde{l}_{c,A_{\rho},V}\psi_1)|^2 + |x|^2|v_{c,A_\rho,V}-\tilde{l}_{c,A_{\rho},V}\psi_1|^2\\
 &= \la_{c,A_\rho,V} + c\int_{\R^3}|v_{c,A_\rho,V}|^4  - \la_1|\tilde{l}_{c,A_{\rho},V}|^2\\
 &\to 0
\end{align*}
as $c\to 0^+$. 
% Then,
% \[\|v_{c,A_\rho,V} - \tilde{l}_{c,A_{\rho},V}\psi_1\|^2_2\le C \|v_{c,A_\rho,V}-\tilde{l}_{c,A_{\rho},V}\psi_1\|^2_{\dot{\Sigma}_{A,V}}\to 0 \text{ as } c\to 0^+.\]
This completes the proof.
\end{proof}

\textsf{Proof of Theorem \ref{th1.6}.} Theorem \ref{th1.6} clearly follows from Corollary \ref{crA.4}, taking into account the equivalence of the norm $\|\cdot\|_{\dot\Sigma_{A,V}}$ and  $\|\cdot\|_{\Sigma_{A,V}}$, stated in Proposition \ref{pr2.5}.
\qed

%%%%%%%%%%%%%%%%%%%%%%%%%%%%%%%%%%%%%%%%%%%%%%%%%%%%%%%%%%%%%%%%%%%%%%%%%

\appendix
 \renewcommand{\appendixname}{Appendix~\Alph{section}}
 
 \section{Spectrum analysis for \texorpdfstring{$\hat{L}_{A,V}:= (i\nabla + A)^2+V$}{prova}}
%  \label{AA}

%%%%%%%%%%%%%%%%%%%%%%%%%%%%%%%%%%%%%%%%%%%%%%%%%%%%%%%%%%%%%%%%%%%%%%%%%

In this appendix, we give the spectrum analysis for $\hat{L}_{A,V}$, stated in Lemma \ref{Ale2.8}.

\begin{proof}[\textsf{Proof of Lemma \ref{Ale2.8}}]
For the sake of simplicity, we split the proof into several steps.

\vspace{0.1cm}
\textbf{Step 1}. 
Introduction of the ``inverse operator'' of $\hat{L}^{-1}_{A,V}$, 
\[
\hat{S}_{A,V}: L^2(\R^3,\C)\to L^2(\R^3,\C),
\]
which is bounded, compact, and symmetric.

\vspace{0.1cm}

First observe that, using on $\Sigma_{A,V}$ the inner product
$$
\langle u,v\rangle_{\dot\Sigma_{A,V}}:= \int_{\R^3}(i\nabla + A(x))u\overline{(i\nabla + A(x))v} +  \int_{\R^3}V(x)u\bar{v}
$$
(see Proposition \ref{pr2.5}),
by Riesz representation Theorem for every $g\in L^2$ there exists a unique $u_g \in {\Sigma}_{A,V}$ such that 
$$
\langle u_g,v\rangle_{\dot\Sigma_{A,V}}=
\langle g,v\rangle_{L^2}=\int_{\R^3}g \bar v,\qquad \forall v\in\Sigma_{A,V}.
$$
Moreover, if $\cL_g(\cdot):=\langle g,\cdot\rangle_{L^2}\in L(\Sigma_{A,V})$, then
\beq
\label{dont}
\|u_g\|_{\dot\Sigma_{A,V}}=\|\cL_g\|_{L(\Sigma_{A,V})}.
\eeq
Then, denoting by $\Pi: {\Sigma}_{A,V} \to L^2$ the projection map and setting  
$$
\hat{S}_{A,V}(g):=\Pi (u_g), \qquad g\in L^2,
$$
 Proposition \ref{pr2.5} implies that $\hat{S}_{A,V}:L^2\to L^2$  is a bounded, compact operator, taking into account
$$
\| u_g\|^2_{\dot{\Sigma}_{A,V}}=\langle u_g,u_g\rangle_{ \dot\Sigma_{A,V}} = (g,u_g)\le \|g\|_{L^2}\|u_g\|_{L^2}\le C\|g\|_{L^2}\|u_g\|_{\dot{\Sigma}_{A,V}}.
$$

\smallskip

Regarding symmetry, we note that 
$$
{\langle f,\hat{S}_{A,V} g\rangle}_{L^2}
={ \langle u_f,u_g\rangle}_{\dot\Sigma_{A,V}}
=\overline{ \langle u_g,u_f\rangle}_{\dot\Sigma_{A,V}}
=\overline{\langle g,\hat{S}_{A,V} f\rangle}_{L^2}
=\langle  \hat{S}_{A,V} f,g\rangle_{L^2}\qquad \forall f,g\in L^2.
$$

\vspace{0.2cm}

\textbf{Step 2.} Let $\sigma(\hat{S}_{A,V})$ be the spectrum of $\hat{S}_{A,V}$ and $\sigma_p(\hat{S}_{A,V})$ be the collection of eigenvalues of $\hat{S}_{A,V}$. 
Then there hold

(1) $0\in \sigma(\hat{S}_{A,V})$,

(2) if $\lambda\in \sigma_p(\hat{S}_{A,V})$ then $\lambda\in\R$ and $\lambda>0$,

(3) $\sigma(\hat{S}_{A,V}) - \{0\} = \sigma_p(\hat{S}_{A,V})$,

(4) the eigenvectors of $\hat{S}_{A,V}$ are orthogonal,

(5) if there is a sequence $\{\tilde\lambda_k\}\subseteq \sigma_p(\hat{S}_{A,V})$ with $\tilde\lambda_k\to \tilde\lambda\in[0,\infty]$, then $\tilde\lambda=0$.

\vspace{0.2cm}

{\em Proof of (1)}: If $0\not\in \sigma(\hat{S}_{A,V})$, then $\hat{S}_{A,V}:L^2 \to L^2 $ has a bounded inverse.
So, $I=\hat{S}^{-1}_{A,V}\circ \hat{S}_{A,V}$ is compact, contrary to $\dim L^2 = \infty$. 

\smallskip

{\em Proof of (2)}: if  $\psi\in \Sigma_{A,V}$ verifies $\|\psi\|_2=1$ and $\hat{S}_{A,V} \psi =\tilde \lambda \psi$,
then by \eqref{dont}
$$
\tilde\lambda=\tilde\lambda \langle \psi,\psi\rangle_{L^2}
=\langle \tilde\lambda \psi,\psi\rangle_{L^2}
=\langle \hat{S}_{A,V} \psi,\psi\rangle_{L^2}
=\langle  \psi,\hat{S}_{A,V}\psi\rangle_{L^2}
=  \langle u_\psi,u_\psi\rangle_{\dot\Sigma_{A,V}}>0.
$$

\smallskip

{\em Proof of (3)}: Assume $\tilde{\la} \in\sigma(\hat{S}_{A,V})$, $\tilde{\la} \ne 0$. 
Then if $N(\hat{S}_{A,V}-\tilde{\la} I)=\{0\}$, the Fredholm alternative would imply $R(\hat{S}_{A,V}-\tilde{\la} I) = L^2$. 
But then $\tilde{\la} \not \in \sigma(\hat{S}_{A,V})$, a contradiction.  

\smallskip

{\em Proof of (4)}: let $\tilde \lambda_k,\tilde \lambda_l\in\sigma_p(\hat{S}_{A,V})$, with $\tilde\lambda_k\neq\tilde\lambda_l$, and let $u_k,u_l \in L^2 $ be corresponding non-zero eigenvectors, then 
$$
\tilde \lambda_k\langle u_k,u_l\rangle_{L^2}=
 \langle \hat{S}_{A,V} u_k,u_l\rangle_{L^2}=
 \langle  u_k,\hat{S}_{A,V}u_l\rangle_{L^2}=
 \tilde \lambda_l\langle u_k,u_l\rangle_{L^2}\quad \Longrightarrow\quad  \langle u_k,u_l\rangle_{L^2}=0.
 $$

\smallskip

{\em Proof of (5)}:
assume $\tilde{\psi}_k\in L^2$, with $\|\tilde\psi_k\|_2=1$, satisfies $\hat{S}_{A,V}\tilde{\psi}_k = \tilde{\la}_k\tilde{\psi}_k$, for $k\in\N^+$, and let us denote by $V_k$ the subspace of $L^2$ spanned by $\{\tilde{\psi}_1,\ldots,\tilde{\psi}_k\}$. 
Then $V_{k-1}\subsetneqq V_{k}$ for every $k=2,\ldots$,  and $\tilde \psi_k$ is orthogonal to $V_{k-1}$.
Next,  observe that $(\hat{S}_{A,V}-\tilde{\la}_k I)V_k\subset V_{k-1}$, and that  $V_{l-1}\subsetneqq V_l \subsetneqq V_k$, for  $k>l$. 
Thus
\beq
\label{1758}
\left\|\frac{\hat{S}_{A,V}\tilde{\psi}_k}{\tilde{\la}_k} - \frac{\hat{S}_{A,V}\tilde{\psi}_l}{\tilde{\la}_l}\right\|_2 
= \left\|\frac{\hat{S}_{A,V}\tilde{\psi}_k - \tilde{\la}_k \tilde{\psi}_k}{\tilde{\la}_k} - \frac{\hat{S}_{A,V}\tilde{\psi}_l- \tilde{\la}_l \tilde{\psi}_l}{\tilde{\la}_l} + \tilde{\psi}_k-\tilde{\psi}_l\right\|_2
\ge 1,
\eeq
for $\hat{S}_{A,V}\tilde{\psi}_k - \tilde{\la}_k \tilde{\psi}_k,\hat{S}_{A,V}\tilde{\psi}_l- \tilde{\la}_l \tilde{\psi}_l,\tilde{\psi}_l\in V_{k-1}$. 
Then if $\tilde{\la}_k\to \tilde{\la}\ne 0$, we have that $\{\tilde{\psi}_k/\tilde{\la}_k\}$ is a bounded sequence. 
However, \eqref{1758} implies that  $\Big\{\frac{\hat{S}_{A,V}\tilde{\psi}_k}{\tilde{\la}_k}\Big\}$ has no convergent subsequence in $L^2$, contradicting the compactness of $\hat{S}_{A,V}$. 
Hence $\tilde{\la}=0$ and (5) is proved.

\vspace{0.1cm}

\textbf{Step 3.} There exists a countable basis of $L^2(\R^3,\C)$ consisting of eigenvectors of $\hat{S}_{A,V}$.

\vspace{0.1cm}

Let $\{\tilde{\la}_k\}_{i\in I}$ a family of distinct eigenvalues of $\hat{S}_{A,V}$, where $I=\{1,\ldots,n\}$ or $I=\N^+$.
Then, we write $V_k = N(\hat{S}_{A,V} - \tilde{\la}_kI)$, $k\in I$, and observe that, by the Fredholm alternative, it holds
$$
0<\dim V_k<\infty.
$$
Let $\widetilde{V}$ be the smallest subspace of $L^2$ containing $V_k$, $ \forall k\in I$:
\[
\widetilde{V} = \left\{\sum_{k=1}^ma_ku_k\ :\ m\in I,\, u_k\in V_k,\, a_k\in\C\right\}.
\] 
We claim that $\widetilde{V}$ is dense in $L^2$. 
Notice that the claim also implies $I=\N$.
Clearly, $\hat{S}_{A,V}(\widetilde{V})\subset \widetilde{V}$. 
Furthermore, $\hat{S}_{A,V}(\widetilde{V}^{\bot})\subset \widetilde{V}^{\bot}$ because $(\hat{S}_{A,V}u^{\bot},v) = {(u^{\bot},\hat{S}_{A,V}v)}=0$ if $u^{\bot}\in\widetilde{V}^{\bot}$ and $v\in \widetilde{V}$.

Denote $\widetilde{S}_{A,V} := \hat{S}_{A,V}|_{\widetilde{V}^{\bot}}$. 
It can be easily checked that $\widetilde{S}_{A,V}$ is compact and symmetric. 
In addition $\sigma(\widetilde{S}_{A,V}) = \{0\}$, because any eigenvalue of $\widetilde{S}_{A,V}$ would also be an eigenvalue of $\hat{S}_{A,V}$. 
Based on this, we obtain
\begin{equation}
\label{Adeq2.7} 
(\widetilde{S}_{A,V}u^{\bot},u^{\bot})=0\ \text{ for any}\ u^{\bot}\in \widetilde{V}^{\bot}. 
\end{equation}
Indeed,  setting 
$$
m :=\inf_{v\in\widetilde{V}^{\bot},\, \|v\|_2=1}( \tilde{S}_{A,V}v,v)\ \text{ and }\ M :=\sup_{v\in\widetilde{V}^{\bot},\, \|v\|_2=1}(\tilde{S}_{A,V}v,v),
$$
it is well known that $m,M\in \sigma(\widetilde{S}_{A,V})$.
Therefore, if \eq{Adeq2.7} does not hold, then either $0 \neq M \in \sigma(\widetilde{S}_{A,V}) = \{0\}$ or $0 \neq m \in \sigma(\widetilde{S}_{A,V}) = \{0\}$, both of which are contradictions.

Now, by the claim \eq{Adeq2.7},  letting $u^{\bot},v^{\bot}\in\widetilde{V}^{\bot}$, and $\theta\in\R$, it holds 
$$
2\Re(\widetilde{S}_{A,V}u^{\bot},e^{i\theta}v^{\bot})=\big(\widetilde{S}_{A,V}(u^{\bot}+e^{i\theta}v^{\bot}),u^{\bot}+e^{i\theta}v^{\bot}\big) - (\widetilde{S}_{A,V}u^{\bot},u^{\bot})-(\widetilde{S}_{A,V}e^{i\theta}v^{\bot},e^{i\theta}v^{\bot}) = 0.
$$
Hence $\widetilde{S}_{A,V} = 0$.
Consequently $\widetilde{V}^{\bot}\subset N(\hat{S}_{A,V})$, where $N(\hat{S}_{A,V})= \{0\}$ by (2) in Step 2. 
Thus, $\widetilde{V}$ is dense in $L^2$.

Finally, collecting the orthonormal basis for each subspace $V_k$, $k\in\N$, we obtain an orthonormal basis of eigenfunctions for $L^2$.

\vspace{0.1cm}

\textbf{Step 4.} {Conclusion.}

\vspace{0.1cm}

By Steps 1-3, we know that the eigenvalues of $\hat{S}_{A,V}$ are a positive sequence $\{\breve{\la}_{k,A,V}\}$ that satisfies $\lim\limits_{k\to\infty}\breve{\la}_{k,A,V} = 0$. 
Moreover, the corresponding eigenfunctions $\{\breve{\psi}_{k,A,V}\}$ are an orthonormal basis of $L^2$. 
Returning to $\hat{L}_{A,V}$, we have
$$
\hat{L}_{A,V}\breve{\psi}_{k,A,V} = \hat{L}_{A,V}\left(\frac{1}{\breve{\la}_{k,A,V}}\hat{S}_{A,V}\breve{\psi}_{k,A,V}\right) = \frac{1}{\breve{\la}_{k,A,V}}\breve{\psi}_{k,A,V},
$$
that is, all values  $\hat{\la}_{k,A,V} := \frac{1}{\breve{\la}_{k,A,V}}$, $k\in\N^+$,  are  eigenvalues of $\hat{L}_{A,V}$. 
Moreover, $\{\breve{\psi}_{k,A,V}\}_{k\in\N^+}$ are the eigenfunctions corresponding to $\{\hat{\la}_{k,A,V}\}_{k\in\N^+}$. 
This gives (i)-(iii).
\end{proof}

\vspace{0.5cm}

%%%%%%%%%%%%%%%%%%%%%%%%%%%%%%%%%%%%%%%%%%%%%%%%%%%%%%%%%%%%%%%%%%%%%%%%%%%%%%%%%%%%%%%%%%%%%%%%

\section{Explicit expressions of the threshold on mass}\label{AB}
\renewcommand{\appendixname}{Appendix~\Alph{section}}

%%%%%%%%%%%%%%%%%%%%%%%%%%%%%%%%%%%%%%%%%%%%%%%%%%%%%%%%%%%%%%%%%%%%%%%%%%%%%%%%%%%%%%%%%%%%%%%%

In this appendix, we give explicit values of $c_{A,V}(r)$ and $\tilde{c}_{A,V}(r)$, from Theorem \ref{th1.1}.
Moreover, we evaluate the values $c_*$ and $\tilde c_*$ arising in Theorem \ref{th1.1Bis}.
Finally, in Remarks \ref{reB.1} and \ref{reB.2} we provide estimates for the concrete cases in Examples \ref{ex1} and \ref{ex2}, respectively.

\smallskip

 For fixed $r>0$,  a straight computation shows that there exists $\sigma_r\in (0,1)$ such that
 \beq
 \label{1549}
 c_{A,V}(r)=F_{A,V}(\si_r,r) = \frac{2}{\hat{\la}_{1,A,V}} \, \frac{r}{2 + \ga r^2 + \sqrt{4\ga r^2 + \ga^2r^4}},
 \eeq
(see \eq{1820}),
%  \begin{align*}\label{eeq1.5}
%  c_{A,V}(r)% &:= \max_{\si\in(0,1)}F_{A,V}(\si,r) = \frac{r}{\hat{\la}_{0,A,V}}\frac{2 + \ga r^2 - \sqrt{4\ga r^2 + \be^2r^4}}{2}\\
% &
%  := \max_{\si\in(0,1)}F_{A,V}(\si,r) =F_{A,V}(\si_r,r) = \frac{2}{\hat{\la}_{1,A,V}} \, \frac{r}{2 + \ga r^2 + \sqrt{4\ga r^2 + \ga^2r^4}},
%  \end{align*}
where
\[\ga := \frac{C^8_{3,4}}{4\hat{\la}_{1,A,V}}=\frac{1}{\hat{\la}_{1,A,V}\|W_4\|^4_2}\]
 and $W_4$ is the ground state of
 \[-\Delta W_4 + \frac{1}{3}W_4=\frac{2}{3} W_4^3  \text{  in   }  \R^3.\]
The function $c_{A,V}(r)$ attains its absolute maximum value  for 
$$
r_*=\frac{2}{\sqrt{3\ga}}= \frac 23\sqrt{3 \hat\lambda_{1,A,V}}\, \cdot  \|W_4\|_2^2,
$$
so that we have the existence of our local minimum solution for problem \eqref{eq1.1} for every mass $0<c< c_*$, with the following lower estimate for the threshold  
 \begin{equation}
 \label{1536}
c_*: = c_{A,V}(r_*)  = \frac{4\sqrt{3}}{9C^4_{3,4}\sqrt{\hat{\la}_{1,A,V}}}=\frac{2\sqrt{3}\|W_4\|^2_2}{9\sqrt{\hat{\la}_{1,A,V}}}.
 \end{equation}

Now, to evaluate $\tilde c_*$, let us set 
$m:= \frac{\min\{1 - 3\va_0,3\alpha_0\min\{1,\bar \a\}\}}{6\hat{\la}_{1,A,V}}$.
Then from \eq{1517} and elementary calculus we have
\beq
\label{1537}
\tilde c_*:=
\  \left\{\begin{array}{ccl} 
c_* & &\mbox{ if } mr_*\ge c_* \\
m \hat r = c_{A,V}(\hat r) & \mbox{with }\hat r> r_* &\mbox{ if } mr_*<  c_*.
\end{array}
\right. 
\eeq

\begin{remark}\label{reB.1}
Here, we refer to Example \ref{ex1}: let $A(x) = A_\rho(x) = \rho x^\bot$, $V(x) = V_\rho(x) = |x|^2 - \rho^2 |x^\bot|^2$. 
Then:

\smallskip

1. Let $|\rho|<\rho_0$, with $\rho_0$ being the constant given in Corollary \ref{crA.4}. 
According to Corollary \ref{crA.4}, it holds $\hat{\la}_{1,A,V} = 3$. 
In this case, our estimate \eq{1536} of the range of $c$ for the {\it existence of local minimizer}  is
\begin{equation*}
%\label{eqB.3}
0<c<c ^* = \frac{2}{9}\|W_4\|^2_2.
\end{equation*}

2. For the threshold of {\it existence of an energy ground state}, we fix in \eqref{1517} the following parameters that, in particular, guarantee that \eqref{V2} is true.
Let $r = r_*$,  $|\rho| < \min\{\frac{2}{3},\rho_0\}$, $\va_0 = \frac{3}{10}$ and $\bar \a = 1- \rho^2$. For $\alpha_0$, recalling the computation of $\tilde{T}_{A,V,\va_0}$ in Example \ref{ex1}, we have
$$
\inf_{|x|>0}\frac{\tilde{T}_{A,V,\va_0}}{V(x)} 
= \inf_{|x|>0}\frac{5 - 10 \rho^2 + \frac{10\rho^2x^2_3}{|x|^2}}{6-6\rho^2 + \frac{6\rho^2x^2_3}{|x|^2}} 
= \min_{t\in[0,1]}\frac{5 - 10 \rho^2 + 10\rho^2t}{6-6\rho^2 + 6\rho^2t} 
= \frac{5-10\rho ^2}{6-6\rho^2},
$$
so we can choose  $\a_0 = \frac{5-10\rho ^2}{6-6\rho^2}$ in \eq{V2}.
Then, by \eqref{1517}, the local minimum in point 1 of Theorem \ref{th1.1} is a ground state if  
$$
0<c<\tilde c_{A,V}(r^*)=\frac{\|W_4\|^2_2}{90}.
$$
Clearly, we have also obtained an estimate for $\tilde c_*$ in point 2 of Theorem \ref{th1.1Bis} (see \eq{1354}):   
$$
\tilde c_*\ge \frac{\|W_4\|^2_2}{90}.
$$
\end{remark}

\begin{remark}\label{reB.2}
Here, we refer to Example \ref{ex2}, with  $|{\hat{\a}}|\le 1/2$. 
The assumption $|{\hat{\a}}|\le 1/2$ ensures that the problem \eq{eq1.1} and the minimization problem \eq{abeq1.2} are reduced to \eq{abeq1.5} and \eq{abeq4.11}, respectively, i.e., $A(x)\equiv 0$ and the electric potential becomes $V(x) + |\mathcal{A}_{\hat{\a}}(x)|^2$.
In the following, we consider the two special cases $V(x)+ |\mathcal{A}_{\hat{\a}}(x)|^2 = |x|^2$ and $ V(x) +  |\mathcal{A}_{\hat{\a}}(x)|^2 = |x^\bot|^2$,   considered in \cite{Bellazzni-et.al.-CMP-2017}, and provide some estimates of the mass thresholds in Theorems \ref{th1.1} and \ref{th1.1Bis},  
taking advantage of \cite[equation (IV.l)]{Weinstein-CMP-1983}. 
Observe that in the second case,  the potential $x^2_1+x^2_2$ in $\R^3$ does not meet the growth condition in \eq{V1}. 
But a symmetrization argument, as in \cite[Page 241]{Bellazzni-et.al.-CMP-2017}, implies that a minimizing sequence for $m^r_c$ is also compact. 
Hence, the conclusions in Theorem \ref{th1.1} also hold in this case. 

\smallskip

1. In the first case, $\hat{\la}_{1,0,|x|^2} = 3$.
Since $V(x)+ |\mathcal{A}_{\hat{\a}}(x)|^2 = |x|^2$ satisfies \eq{V1}-\eq{V2}, all the conclusions of Theorems \ref{th1.1}, \ref{th1.1Bis} and \ref{th1.3} also hold for \eq{abeq1.5}. 
Hence \eq{1549} states that for any $r>0$ the local minimum $m^r_c$ can be achieved and there exists a solution $(u_{c,0,|x|^2},\la_{c,0,|x|^2})$ of \eq{abeq1.5} if
$$
0<c<c_{0,|x|^2}(r) = \frac{2\|W_4\|^4_2r}{6\|W_4\|^4_2 + r^2 + \sqrt{12\|W_4\|^4_2 r^2 + r^4}}.
$$
Next, for any nontrivial solution $v$ of \eq{abeq1.5} with  $J(v)\le m^r_c$, by the Pohozaev identity in Proposition \ref{pr3.2}  and \eq{eq5-1}, we find
\beq\label{1624}
 \frac{3 }{2}\, c=\frac{\hat{\la}_{1,0,|x|^2}}{2}\, c\ge m_c^r\ge J(v) = \frac{1}{6}\int_{\R^3}|\nabla v|^2 + \frac{5}{6}\int_{\R^3}|x|^2 |v|^2\ge\frac{1}{6}\|v\|^2_{\dot{\Sigma}_{0,|x|^2}}.
\eeq

Then $v\in B(r)$ if $c\le r/9$, and so  $u_{c,0,|x|^2}$ is an energy ground state if $0<c<\min\{r/9,$ $c_{0,|x|^2}(r)\}$. 
In particular, letting $r = r^* = 2\|W_4\|^2_2$, we have
$$
c_*=c_{0,|x|^2}(r^*) = \frac{2}{9}\|W_4\|^2_2
$$
and an accurate range of the existence of the energy ground state (local minimizer becoming an energy ground state) is
$$
0<c<\frac{2}{9}\|W_4\|^2_2.
$$
Notice that in this case we have verified $c_*=\tilde c_*$.

\smallskip

2. In the second case, $\hat{\la}_{1,0,|x^\bot|^2} = 2$ (\cite[Lemma 2.1]{Bellazzni-et.al.-CMP-2017}). 
Then
$$
c_{0,|x^\bot|^2}(r) = \frac{2\|W_4\|^4_2r}{4\|W_4\|^4_2 + r^2 + \sqrt{8\|W_4\|^4_2 r^2 + r^4}}.
$$
Letting $r = r^* = \frac{2\sqrt{6}}{3}\|W_4\|^2_2$, we obtain
$$
c_*=c_{0,|x^\bot|^2}(r^*) = \frac{\sqrt{6}}{9}\|W_4\|^2_2.
$$
Now, taking into account  \eq{1624} and arguing as before, we conclude that a local minimum solution in $S_c\cap B(r^*)$ is also a ground state solution when $c\le r^*/6=\frac{\sqrt{6}}{9}\|W_4\|^2_2$.
Then, since $\min\{r^*/6,c_{0,|x^\bot|^2}(r^*)\} = \frac{\sqrt{6}}{9}\|W_4\|^2_2$, an accurate range of the existence of energy ground state is
$$
0<c<\tilde c_*=c_*=\frac{\sqrt{6}}{9}\|W_4\|^2_2.
$$
 Also in  this case we have verified $c_*=\tilde c_*$.
\end{remark}

\vspace{.5cm}

\noindent {\bf Acknowledgements} Xiaoming An is supported by  Guizhou Provincial Basic Research Program (Natural Science) under grant Qiankehejichu-zk[2025] General 233.
Lun Guo is supported by Hubei Provincial Natural Science Foundation of China (No.2024AFB839).
Xiao Luo is supported by National Natural Science Foundation of China (No.~12471103) and Anhui Provincial Natural Science Foundation (No.2308085MA05).
Riccardo Molle is supported by the INdAM-GNAMPA group
by the MIUR Excellence Department Projects awarded to the Department of Mathematics, University of Rome ``Tor Vergata'', CUP E83C18000100006 and CUP E83C23000330006, and by
the MUR-PRIN-2022AKNSE4 003 ``Variational and Analytical aspects of Geometric PDEs''.

\vspace{.5cm}

 \noindent {\bf Data Availability} Data sharing not applicable to this article as no datasets were generated or analysed during the current study.

\vspace{.5cm}

\noindent\textbf{Conflict of interest} The authors declare there is no conflicts of interest.

\end{document}